\newtheorem{theorem}{Theorem}[section]
\newtheorem{lemma}[theorem]{Lemma}
\newtheorem{proposition}[theorem]{Proposition}
\newtheorem{corollary}[theorem]{Corollary}
\theoremstyle{remark}
\newtheorem{remark}[theorem]{Remark}
\renewcommand{\P}{\mathbb{P}}
\renewcommand{\mod}{\hspace{4pt}\mathrm{mod}\hspace{4pt}}
\newcommand{\F}{\mathbb{F}}
\newcommand{\Z}{\mathbb{Z}}
\newcommand{\Q}{\mathbb{Q}}
\newcommand{\GL}{\mathrm{GL}}
\newcommand{\SL}{\mathrm{SL}}
\newcommand{\slnq}{\mathrm{SL}(n,\Q)}
\newcommand{\slnz}{\mathrm{SL}(n,\Z)}
\newcommand{\slnm}{\mathrm{SL}(n,\Z_m)}
\newcommand{\glnm}{\mathrm{GL}(n,\Z_m)}
\newcommand{\glnz}{\mathrm{GL}(n,\Z)}
\newcommand{\glnq}{\mathrm{GL}(n,\Q)}
\newcommand{\glnp}{\mathrm{GL}(n,p)}
\newcommand{\fis}{\leq_f \!}
\newcommand{\st}{\hspace{3pt} | \hspace{3pt}}
\newcommand{\gpess}{\langle S \hspace{1pt} \rangle}
\newcommand{\abk}{\allowbreak}
\begin{document}

\title[Algorithms for arithmetic groups]{Algorithms for arithmetic
groups with the congruence subgroup property}

\dedicatory{Dedicated to the  memory of  \'Akos Seress}

\begin{abstract}
We develop practical techniques to compute with arithmetic groups
$H\leq \slnq$ for $n>2$. Our approach relies on constructing a
principal congruence subgroup in $H$. Problems solved include
testing membership in $H$, analyzing the subnormal structure of
$H$, and the orbit-stabilizer problem for $H$. Effective
computation with subgroups of $\GL(n,\mathbb{Z}_m)$ is vital to
this work. All algorithms have been implemented in {\sf GAP}.
\end{abstract}

\author{A.~S. Detinko}
\author{D.~L. Flannery}
\author{A.~Hulpke}

\maketitle

In \cite{Tits,FRecog,SF} we established methods for computing with
finitely generated linear groups over an infinite field, based on
the use of congruence homomorphisms. These have been applied to
test virtual solvability and answer questions about
solvable-by-finite (SF) linear groups.

Computing with finitely generated linear groups that are not SF is
a largely unexplored topic. These
groups comprise a wide class in which certain algorithmic problems
are undecidable \cite[Section~3]{Survey}. We might be more confident
of progress if we restrict ourselves to arithmetic subgroups of
linear algebraic groups. Decision problems for such groups were
investigated by Grunewald and Segal~\cite{GSI}; see also
\cite{DeGrDetF}. We note renewed activity focussed on deciding
arithmeticity~\cite{Sarnak}.

This paper is a revision of \cite{ArithmPublished}, which
provides a starting point for computation with semisimple
arithmetic groups that have the congruence subgroup property
(CSP). A prominent example is $\Gamma_n=\abk \slnz$, $n\geq\abk
3$. Recall that $H\leq \SL(n,\Q)$ is arithmetic if $\Gamma_n\cap
H$ has finite index in both $H$ and $\Gamma_n$ (in particular,
finite index subgroups of $\Gamma_n$ are arithmetic). Each
arithmetic group $H\leq \slnq$ contains a principal congruence
subgroup $\Gamma_{n,m}$ for some $m$, namely the kernel of the
congruence homomorphism $\Gamma_n \rightarrow \allowbreak \slnm$
induced by natural surjection $\Z \rightarrow \Z_m:=
\Z/m\Z$~\cite{BLS, Mennicke}. So if we know that $\Gamma_{n,m}\leq
H$ then we can transfer much of the computing to $\slnm$, for
which efficient machinery is available~\cite{Hulpke2013}. We give
a method to construct $\Gamma_{n,m}$ in $H$. This implies
that membership testing and other fundamental problems
are decidable.

We pay special attention to subnormality and the orbit-stabilizer
problem. Aside from their computational importance, these were the
earliest questions considered for arithmetic groups. The study of
subnormal subgroups of $\Gamma_n$ originated in the late 19th
century and led up to formulation of the Congruence Subgroup
Problem. In turn, the solution of that problem used knowledge of
$\Gamma_n$-orbits in $\Q^n$ \cite[\S 17] {Humphreys}.

The paper is organized as follows. Section~\ref{Background}
covers background on arithmetic groups: basic facts; material
about principal congruence subgroups (their generating sets, 
maximality); and subnormal structure.
Section~\ref{AlgorithmsforgroupsoverZm} details relevant theory of
matrix groups over $\Z_m$ and computing in $\glnm$. In
Section~\ref{AlgorithmsArithmetica} we give a suite of algorithms
for arithmetic groups in $\Gamma_n$. After verifying decidability,
we describe computing a maximal principal congruence subgroup;
membership testing; and aspects of subnormality, e.g., testing
whether an arithmetic group $H\leq \abk \Gamma_{n}$ is subnormal
or normal, and constructing the normal closure of a subgroup of
$\Gamma_n$. In Section~\ref{OSProblem} we solve the
orbit-stabilizer problem for arithmetic groups in $\Gamma_n$
acting on $\Q^n$. Our solution draws on a comprehensive
description of orbits and stabilizers for a principal
congruence subgroup acting on $\Z^n$.
Section~\ref{IntertwiningArithmeticGroupstoSLnZ} shows how to
extend results from $\Gamma_n$ to $\slnq$. Finally, we examine the
performance of our {\sf GAP} \cite{GAP} implementation of the
algorithms.

We remark that the scope of this paper may be widened to other
groups with the CSP, such as $\mathrm{Sp}(2m,\mathcal{O}_\P)$ or
$\SL(n,\mathcal{O}_\P)$ for $m\geq 2$ and $n>2$, where
$\mathcal{O}_\P$ is the ring of integers of a number field $\P$
that is not totally imaginary~\cite{BLS}.

\section{Arithmetic subgroups of $\SL(n,\Q)$: background}
\label{Background}

\subsection{Preliminaries}\label{Prelim}

Let $R$ be a commutative ring with $1$, and $I\subseteq R$ be an
ideal. The natural surjection $R\rightarrow\abk R/I$ induces a
congruence homomorphism $\varphi_I:\mathrm{Mat}(n,R)\rightarrow
\mathrm{Mat}(n,R/I)$. Let $G_n =\abk  \GL(n,R)$ and $\Gamma_n =
\abk \SL(n,R)$. The kernel of $\varphi_I$ on $\Gamma_n$ or $G_n$
is a \emph{principal congruence subgroup} (PCS) \emph{of level}
$I$. Such a subgroup of $\Gamma_n$ will be denoted $\Gamma_{n,I}$.
We set $\Gamma_{n,R}= \abk \Gamma_n$. If $R = \abk \Z$ then $R/I =
\Z_m$ for some non-negative integer $m$, and the subscript `$I$'
is replaced by `$m$'.

For computational purposes, $\Gamma_n$ and $G_n$ should be
finitely generated, and proper quotients of $R$ should be finite.
The latter is true if $R= \mathcal{O}_\P$ or $R$ is the univariate
polynomial ring $\F_q[\mathrm{x}]$ over the finite field $\F_q$ of
size $q$. These are two major types of ambient ring $R$
encountered when computing with finitely generated linear groups.

Define $t_{ij}(a)=1_n+e_{ij}(a)$, where $e_{ij}(a)\in
\mathrm{Mat}(n,R)$ has $a$ in position $(i,j)$ and zeros
everywhere else. The matrices $t_{ij}(a)$ for distinct $i$, $j$
are \emph{transvections}. The subgroup
\[
E_{n,I}=\langle t_{ij}(a) :\abk a \in I, \, 1\leq i, j \leq n, \,
i\neq j \rangle
\]
of $\Gamma_{n,I}$ is the \emph{elementary group of level $I$}. We
write $e_{ij}$, $t_{ij}$, $E_n$ for $e_{ij}(1)$, $t_{ij}(1)$,
$E_{n,R}$ respectively.
\begin{lemma}\label{tijRelations} \
\begin{itemize}
\item[{\rm (i)}] For all $i\neq j$, $[t_{ij}(a), t_{ji}(b)] =
1_n+e_{ij}(a^2 b) - e_{ji}(ab^2) + e_{ii}(a b + a^2b^2) -e_{jj}(a
b)$. \item[{\rm (ii)}] If $\, i,j,k$ are pairwise distinct then
$[t_{ij}(a), t_{jk}(b)] = t_{ik}(a b)$ and $[t_{ij}(a), t_{ki}(b)]
= t_{kj}(-a b)$. \item[{\rm (iii)}] If $\, i\neq l$ and $j\neq k$
then $t_{ij}(a)$ commutes with $t_{kl}(b)$.
\end{itemize}
\end{lemma}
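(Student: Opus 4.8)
\emph{Proof plan.} The plan is to verify all three identities by direct computation in $\mathrm{Mat}(n,R)$, using only the multiplication rule for matrix units: $e_{ij}(a)\,e_{kl}(b) = e_{il}(ab)$ when $j=k$, and $e_{ij}(a)\,e_{kl}(b) = 0$ otherwise. In particular $e_{ij}(a)^2 = 0$ for $i\neq j$, so $t_{ij}(a)\,t_{ij}(-a) = 1_n$; hence $t_{ij}(a)^{-1} = t_{ij}(-a)$, and each commutator $[x,y] = x^{-1}y^{-1}xy$ below is an explicit product of four transvections that one multiplies out and simplifies term by term.

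Parts (iii) and (ii) are the routine cases. For (iii), the hypotheses $i\neq l$ and $j\neq k$ (together with $i\neq j$, $k\neq l$, which hold because these are transvections) force both $e_{ij}(a)\,e_{kl}(b) = 0$ and $e_{kl}(b)\,e_{ij}(a) = 0$, so $t_{ij}(a)\,t_{kl}(b) = 1_n + e_{ij}(a) + e_{kl}(b) = t_{kl}(b)\,t_{ij}(a)$. For the first identity of (ii), expand $[t_{ij}(a), t_{jk}(b)] = t_{ij}(-a)\,t_{jk}(-b)\,t_{ij}(a)\,t_{jk}(b)$; since $i,j,k$ are pairwise distinct, every product of two matrix units occurring in the expansion vanishes except $e_{ij}(a)\,e_{jk}(b) = e_{ik}(ab)$, and collecting terms leaves $1_n + e_{ik}(ab) = t_{ik}(ab)$. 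The second identity of (ii) then follows from the first via $[x,y]^{-1} = [y,x]$ together with $t_{kj}(ab)^{-1} = t_{kj}(-ab)$.

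Part (i) is the one demanding care, and it carries the only real difficulty, which is bookkeeping: here $e_{ij}(a)$ and $e_{ji}(b)$ do not annihilate one another — indeed $e_{ij}(a)\,e_{ji}(b) = e_{ii}(ab)$ and $e_{ji}(b)\,e_{ij}(a) = e_{jj}(ab)$ — so the quadratic, cubic, and quartic terms all survive and the commutator lands in the span of $1_n, e_{ii}, e_{jj}, e_{ij}, e_{ji}$ rather than being a single transvection. The plan is to first form $t_{ij}(-a)\,t_{ji}(-b) = 1_n - e_{ij}(a) - e_{ji}(b) + e_{ii}(ab)$ and $t_{ij}(a)\,t_{ji}(b) = 1_n + e_{ij}(a) + e_{ji}(b) + e_{ii}(ab)$, then multiply these, collecting the resulting products by which of $e_{ii}, e_{jj}, e_{ij}, e_{ji}$ each feeds into, using repeatedly $e_{ii}e_{ij} = e_{ij}$, $e_{ij}e_{ji} = e_{ii}$, $e_{ji}e_{ii} = e_{ji}$, $e_{ii}e_{ii} = e_{ii}$ and the vanishing of all incompatible products. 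Assembling the coefficients gives precisely $1_n + e_{ij}(a^2b) - e_{ji}(ab^2) + e_{ii}(ab + a^2b^2) - e_{jj}(ab)$; a useful sanity check is that setting $a = 0$ or $b = 0$ collapses this to $1_n$, as it should.
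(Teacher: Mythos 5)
The paper states this lemma without proof, treating it as a standard elementary calculation. Your direct verification---multiplying out the four transvections using the matrix-unit rule $e_{ij}(a)\,e_{kl}(b)=\delta_{jk}\,e_{il}(ab)$ and collecting terms---is exactly the canonical argument, and I have checked that your computation for each part (including the bookkeeping-heavy part (i), which lands on $1_n+e_{ij}(a^2b)-e_{ji}(ab^2)+e_{ii}(ab+a^2b^2)-e_{jj}(ab)$) is correct. The derivation of the second identity in (ii) from the first via $[x,y]^{-1}=[y,x]$ is a tidy shortcut and also correct.
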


\begin{proposition} \label{GenRingElvsGamma}
In each of the following situations, $\Gamma_n=E_n$: {\rm (i)}
$n\geq 2$ and $R$ is Euclidean or semi-local; {\rm (ii)} $n\geq 3$
and $R$ is a Hasse domain of a global field.
\end{proposition}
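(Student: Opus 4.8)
The plan is to handle the two cases by entirely different means: case (i) admits a direct, constructive proof by ``elementary reduction'', whereas case (ii) is deep and I would not reprove it but rather quote the Bass--Milnor--Serre solution of the congruence subgroup problem. In either case the inclusion $E_n\leq\Gamma_n$ is automatic (each $t_{ij}(a)$ has determinant $1$), so the content is the reverse inclusion $\Gamma_n\subseteq E_n$.

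For case (i) the goal is to show that every $g\in\Gamma_n=\SL(n,R)$ can be reduced to $1_n$ by left and right multiplication by transvections --- i.e.\ by elementary row and column operations, which is legitimate precisely because all $t_{ij}(c)\in E_n$. First I would record the normalization $\mathrm{diag}(u,u^{-1},1,\dots,1)\in E_n$ for every unit $u\in R^\times$ (a special case of Whitehead's lemma; it also follows from the transvection identities of Lemma~\ref{tijRelations} together with the Weyl element $t_{12}(1)\,t_{21}(-1)\,t_{12}(1)\in E_n$). Given $g$, I would then clear its first column. When $R$ is Euclidean I would run the Euclidean algorithm: repeatedly use an operation $t_{i1}(c)$ to replace one of two column entries by its residue modulo the other, terminating with first column $(d,0,\dots,0)^{\mathsf T}$; since the first column of an invertible matrix is unimodular and a Euclidean domain is a PID, $d$ is a unit. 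When $R$ is semi-local I would instead invoke the fact (Bass) that a semi-local ring has stable rank $1$, so any unimodular column of length $\ge 2$ can be carried, by elementary row operations, to a column with a single unit entry. In both cases I would next clear the first row by right multiplications $t_{1j}(c)$ without disturbing the first column, obtaining a block matrix $\mathrm{diag}(u)\oplus g'$ with $g'\in\GL(n-1,R)$; iterating the column- and row-clearing on $g'$, using only indices $\ge 2$, transforms $g$ modulo $E_n$ into a diagonal matrix $\mathrm{diag}(u_1,\dots,u_n)$ with $u_1\cdots u_n=\det g=1$. Finally, writing $\mathrm{diag}(u_1,\dots,u_n)=\mathrm{diag}(u_1,u_1^{-1},1,\dots,1)\cdot\mathrm{diag}(1,u_1u_2,u_3,\dots,u_n)$ and repeating, the normalization collapses this to $1_n$, so $g\in E_n$.

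The main obstacle is case (ii). When $R$ is a Hasse domain of a global field that is not a PID, neither a Euclidean algorithm nor stable rank $1$ is available, and moreover $\SL(2,R)=E_{2,R}$ can fail, so the hypothesis $n\ge 3$ is essential and any proof must use genuine arithmetic of the field. I would not attempt this from scratch. Instead I would cite the theorem of Bass, Milnor and Serre (for rings of $S$-integers in a number field) together with its function-field analogue: for $n\ge 3$, $E_n$ is normal in $\SL(n,R)$, the quotient $\SL(n,R)/E_n$ is controlled by a Mennicke symbol on unimodular rows, and that symbol turns out to be trivial --- equivalently, $\mathrm{SK}_1(R)=0$ and the $K_1$-stabilization already occurs at $n=3$ --- the triviality being established via strong approximation and class field theory. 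This is precisely the circle of ideas whose development produced the congruence subgroup property in the first place, so invoking it here is the natural course.
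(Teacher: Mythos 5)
The paper does not give an argument of its own here: its ``proof'' is a bare citation to \cite[4.3.9, pp.~172--173]{HahnOMeara}. Your sketch is a correct reconstruction of what one finds there and in the surrounding literature: for case (i) the elementary-reduction argument, using the Euclidean algorithm in the first case and Bass's stable-rank-$1$ property of semi-local rings in the second, together with Whitehead's lemma $\mathrm{diag}(u,u^{-1},1,\dots,1)\in E_n$ to dispose of the residual diagonal; and for case (ii) the Bass--Milnor--Serre theorem that $\mathrm{SK}_1$ of a Hasse domain vanishes, combined with surjective $K_1$-stability at $n\geq 3$ for Dedekind domains (or, equivalently, the Mennicke-symbol computation done directly at the level of $\SL(n,R)/E_n$). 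So your route is essentially the one the cited reference takes; you have simply unpacked the citation. One small slip in the write-up of case (i): to run the Euclidean algorithm on the first column you need left multiplications $t_{ij}(c)$ for general pairs $i\neq j$ (adding $c\,a_j$ to $a_i$), not only $t_{i1}(c)$, since the reduction must go back and forth between two entries; this is cosmetic and does not affect the argument.
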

\begin{proof}
See \cite[4.3.9, pp.~172--173]{HahnOMeara}.
\end{proof}
\begin{remark}
$\mathcal{O}_\P$ is a Hasse domain of a global field,
$\F_q[\mathrm{x}]$ is Euclidean, and $\Z_m$ is semi-local.
\end{remark}

Proposition~\ref{GenRingElvsGamma} implies that $\varphi_m$ maps
$\slnz$ onto $\slnm$. However, $\varphi_I: \GL(n,R)\rightarrow
\abk \GL(n,R/I)$ may not be surjective.
\begin{proposition} \label{GenRingElvsGamma2}
Let $R=\mathcal{O}_\P$ or $\F_q[\mathrm{x}]$. If $n>2$ or $R =
\mathcal{O}_\P$ then $E_n$, $\Gamma_n$, and $G_n$ are finitely
generated. None of the groups $E_2$, $\Gamma_2$, or $G_2$ is
finitely generated when $R= \F_q[\mathrm{x}]$.
\end{proposition}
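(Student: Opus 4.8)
The plan is to treat finite generation of $E_n$, $\Gamma_n$, and $G_n$ in that order --- using that $\Gamma_n$ frequently coincides with $E_n$ and that $G_n$ is an extension of $R^\times$ by $\Gamma_n$ --- and to handle the two negative assertions (the case $R=\F_q[\mathrm{x}]$, $n=2$) together at the end. First I would deal with $E_n$ when $n\geq 3$. Both $\mathcal{O}_\P$ and $\F_q[\mathrm{x}]$ are finitely generated as rings --- $\mathcal{O}_\P$ because it is free of finite rank as a $\Z$-module, and $\F_q[\mathrm{x}]$ trivially --- say by elements $r_1,\dots,r_s$, and I claim the finitely many transvections $t_{ij}(1)$ and $t_{ij}(r_k)$ (over all $i\neq j$ and $1\leq k\leq s$) already generate $E_n$. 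This is a closure argument: writing $H$ for the subgroup they generate, the set of $a\in R$ with $t_{ij}(a)\in H$ for all $i\neq j$ is a subring of $R$ --- closed under addition since $t_{ij}(a)t_{ij}(b)=t_{ij}(a+b)$, and under multiplication by Lemma~\ref{tijRelations}(ii), since for $i\neq k$ one may choose $j\neq i,k$ (this is where $n\geq 3$ enters) and form $[t_{ij}(a),t_{jk}(b)]=t_{ik}(ab)\in H$ --- and this subring contains each $r_k$, hence equals $R$, so $H=E_n$. For $n=2$ the only ring in question is $R=\mathcal{O}_\P$, and there I would argue directly: $E_2(R)$ is generated by the two subgroups $\{t_{12}(a):a\in R\}$ and $\{t_{21}(a):a\in R\}$, each isomorphic to $(R,+)$, and since $(\mathcal{O}_\P,+)$ is free abelian of rank $[\P:\Q]$, it follows that $E_2(\mathcal{O}_\P)$ is generated by $2[\P:\Q]$ transvections.

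Next comes $\Gamma_n$. For $n\geq 3$, Proposition~\ref{GenRingElvsGamma} gives $\Gamma_n=E_n$ --- both $\mathcal{O}_\P$ and $\F_q[\mathrm{x}]$ are Hasse domains of global fields, and $\F_q[\mathrm{x}]$ is moreover Euclidean --- so finite generation passes from $E_n$ to $\Gamma_n$. For $n=2$ and $R=\mathcal{O}_\P$, where $\Gamma_2$ need not equal $E_2$, I would instead invoke that $\SL(2,\mathcal{O}_\P)$ is an arithmetic group (the group of integral points of $\mathrm{Res}_{\P/\Q}\,\SL_2$), hence finitely generated by the Borel--Harish-Chandra finiteness theorem. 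Finally, for $G_n$: the determinant maps $G_n$ onto $R^\times$ with kernel $\Gamma_n$, and $R^\times$ is finitely generated ($\mathcal{O}_\P^\times$ by Dirichlet's unit theorem, and $\F_q[\mathrm{x}]^\times=\F_q^\times$ being finite), so $G_n$ is generated by $\Gamma_n$ together with finitely many diagonal matrices $\mathrm{diag}(u,1,\dots,1)$.

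For the negative statement ($R=\F_q[\mathrm{x}]$, $n=2$) I would use the structure of $\SL(2,\F_q[\mathrm{x}])$ as a group acting on the Bruhat--Tits tree of $\SL_2$ over the completion $\F_q(\!(\mathrm{x}^{-1})\!)$. By Nagao's theorem (equivalently, Serre's analysis of this action), $\SL(2,\F_q[\mathrm{x}])=\SL(2,\F_q)\ast_{B(\F_q)}B(\F_q[\mathrm{x}])$, where $B(R)\leq\SL(2,R)$ denotes the subgroup of upper-triangular matrices. Now $B(\F_q[\mathrm{x}])$ is the union of the strictly ascending chain of finite subgroups $B_\ell$ ($\ell\geq 0$), where $B_\ell$ consists of the upper-triangular matrices whose off-diagonal entry has degree at most $\ell$; consequently $\SL(2,\F_q[\mathrm{x}])$ is the union of the strictly ascending chain of proper subgroups $\SL(2,\F_q)\ast_{B(\F_q)}B_\ell$, and a group that is the union of a strictly ascending chain of proper subgroups cannot be finitely generated. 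Thus $\Gamma_2=\SL(2,\F_q[\mathrm{x}])$ is not finitely generated; neither is $E_2$, since $\Gamma_2=E_2$ here (Proposition~\ref{GenRingElvsGamma}(i), as $\F_q[\mathrm{x}]$ is Euclidean); and $G_2$ is not finitely generated either, because it contains $\Gamma_2$ with finite index $q-1$ and finite-index subgroups of finitely generated groups are finitely generated.

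I expect the negative part to be the main obstacle. Everything in the positive direction is routine once Proposition~\ref{GenRingElvsGamma} and the two classical finiteness facts (Borel--Harish-Chandra, Dirichlet) are granted, whereas the failure of finite generation for $\SL(2,\F_q[\mathrm{x}])$ is invisible to matrix manipulation and genuinely needs the tree/amalgam picture. The delicate point there is to confirm that the chain of amalgams $\SL(2,\F_q)\ast_{B(\F_q)}B_\ell$ really is strictly increasing (so that its union is not finitely generated), which rests on the structure theory of amalgamated products --- concretely, on knowing how a subgroup generated by one factor and part of the other meets the second factor.
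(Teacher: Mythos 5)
Your proof is correct and is, in effect, an unpacking of the textbook citations that constitute the paper's proof (the paper defers entirely to Hahn--O'Meara 4.3.11, 1.2.17, and 4.3.16). Your positive-direction arguments match what those references establish: the closure/commutator argument showing $E_n=\langle t_{ij}(r_k)\rangle$ for a finite ring-generating set is exactly the mechanism behind Hahn--O'Meara 4.3.11 (and the observation that it requires $n\geq 3$ is the crucial place where the hypothesis enters); the passage from $\Gamma_n$ to $G_n$ via the determinant and the finitely generated unit group is what 1.2.17 plus Dirichlet give; and the rank-$2$ case over $\mathcal{O}_\P$ needs an external finiteness input (Borel--Harish-Chandra, or classical fundamental-domain arguments for Bianchi and Hilbert modular groups), since $\Gamma_2\ne E_2$ in general there. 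For the negative case you correctly identify that no elementary manipulation suffices and reach for Nagao's amalgam decomposition $\SL(2,\F_q[\mathrm{x}])=\SL(2,\F_q)\ast_{B(\F_q)}B(\F_q[\mathrm{x}])$; the two facts you rely on --- that in $A\ast_C B$ one has $\langle A,B_\ell\rangle=A\ast_C B_\ell$ with $(A\ast_C B_\ell)\cap B=B_\ell$, and that a union of a strictly ascending proper chain is not finitely generated --- are exactly the standard ingredients, and your deduction for $E_2$ (via $\Gamma_2=E_2$ for a Euclidean ring) and for $G_2$ (via finite index) closes the case. The difference is only one of exposition: the paper buys brevity by citing, while you buy self-containedness; you could tighten a little by noting that the subring $\{a:t_{ij}(a)\in H\ \forall i\ne j\}$ contains $1$ because $t_{ij}(1)\in H$ by hypothesis, which is what makes the closure argument go.
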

\begin{proof}
If $n\geq 3$ then $\Gamma_n = E_n$ is finitely generated by
\cite[4.3.11, p.~174]{HahnOMeara}; hence so too is $G_n$, by
\cite[1.2.17, p.~29]{HahnOMeara} and Dirichlet's unit theorem. See
\cite[4.3.16, p.~175]{HahnOMeara} and subsequent comments for the
remaining claims.
\end{proof}

The notation $A\fis B$ means that $A$ is of finite index in the
group $B$. For $n\geq 3$, $\Gamma_n=\abk \slnz$  has the
\emph{congruence subgroup property}: $H\fis \allowbreak \Gamma_n$
is equivalent to $H$ containing some $\Gamma_{n,m}$ \cite{BLS,
Mennicke}. On the other hand, $\Gamma_2$ does not have the CSP
\cite[\S 1.1]{Sury}.

\subsection{Generators of congruence subgroups}
\label{CongSubgroupGens}

Let $R=\Z$. We first discuss generating sets for $G_n$ and
$\Gamma_n$, and thus for their homomorphic images $\overline{G}_n
= \abk \GL(n,\Z_m)$, $\overline{\Gamma}_n = \abk \slnm$.

By Lemma~\ref{tijRelations}~(ii), the transvections
$t_{12},\ldots, t_{1n}, t_{21}, \ldots ,t_{n1}$ constitute a
generating set for $\Gamma_n=\abk E_n$.  In fact $\Gamma_n$ has a
generating set of minimal size $2$: $t_{12}$ and
\[
{\footnotesize \left( \renewcommand{\arraycolsep}{.015cm}
\begin{array}{cc} 0 & 1_{n-1}  \\
\vspace*{-3mm}&\\ (-1)^{n-1} & 0 \end{array}\right)};
\]
see \cite[p.~107]{Newman}. Adding the diagonal matrix
$\mathrm{diag}(-1, 1, \dots , 1)$ produces a generating set for
$G_n$ of size $3$ (better still, it is known that
$G_n$ is $2$-generated). Similarly, two generators of
$\overline{\Gamma}_n$, together with all diagonal matrices
$\mathrm{diag}(\alpha, 1, \dots , 1)$ as $\alpha$ runs over a
generating set for the unit group $\Z_m^*$ of $\Z_m$, generate
$\overline{G}_n$. If $m=2$ or an odd prime power then
$\overline{G}_n$ is $2$-generated. For all $k\geq 3$,
$\GL(n,\Z_{2^k})$ is $4$-generated, and $\GL(n,\Z_{4})$ is
$3$-generated.

The normal closure of $A$ in $B$ is denoted $A^B$. Let $(k,l)$ be
the permutation matrix obtained by swapping rows $k$ and $l$ of
$1_n$.
\begin{lemma}\label{OneTransvectionImpliesAll}
For any $i\neq j$, $E_{n,m}^{\Gamma_n} = \allowbreak \langle
t_{ij}(m)\rangle^{\Gamma_n}$.
\end{lemma}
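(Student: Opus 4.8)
The plan is to prove the two inclusions separately. The inclusion $\langle t_{ij}(m)\rangle^{\Gamma_n}\subseteq E_{n,m}^{\Gamma_n}$ is immediate, since $t_{ij}(m)\in E_{n,m}$ and the right-hand side is normal in $\Gamma_n$. For the reverse inclusion, set $N:=\langle t_{ij}(m)\rangle^{\Gamma_n}$; as $N\trianglelefteq\Gamma_n$, it suffices to show that every generator $t_{kl}(cm)$ (with $k\neq l$, $c\in\Z$) of $E_{n,m}$ lies in $N$, for then $E_{n,m}\leq N$ and hence $E_{n,m}^{\Gamma_n}\leq N$.

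The key mechanism is that, because $N$ is normalized by $\Gamma_n$ and every transvection belongs to $\Gamma_n$, whenever a transvection $\tau$ is already known to lie in $N$, every commutator $[\tau,t_{pq}(b)]$ lies in $N$ as well: indeed $[\tau,t_{pq}(b)]=\tau^{-1}\cdot\tau^{t_{pq}(b)}$, a product of two elements of $N$. Combining this with the commutator formulas of Lemma~\ref{tijRelations}~(ii), I would bootstrap from $t_{ij}(m)\in N$ as follows (here $n\geq 3$ supplies a third index). First, for every $k\neq i,j$ and $b\in\Z$, $t_{ik}(mb)=[t_{ij}(m),t_{jk}(b)]\in N$; then, fixing any $h\neq i,j$, also $t_{ij}(mb)=[t_{ih}(mb),t_{hj}(1)]\in N$, so $t_{il}(mb)\in N$ for all $l\neq i$ and all $b$. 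A second application of the same device gives, for $k\neq i$ and $l\neq i$ with $k\neq l$, $t_{kl}(mb)=[t_{ki}(1),t_{il}(mb)]\in N$, and finally $t_{ki}(mb)=[t_{kh}(mb),t_{hi}(1)]\in N$ for a suitable $h\notin\{k,i\}$. Altogether $t_{kl}(cm)\in N$ for all $k\neq l$ and $c\in\Z$, as required.

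I expect the only point needing care to be the bookkeeping in this cascade: at each step the normality trick must be applied to a transvection already placed in $N$ (not merely in $E_{n,m}$), while the second entry of each commutator may be an arbitrary transvection, since $t_{pq}(b)\in\Gamma_n$ automatically. One should also flag where $n\geq 3$ enters — namely in the availability of an index outside $\{i,j\}$ (resp.\ $\{k,i\}$) — consistent with the ambient setting of the paper. Note that no appeal to Proposition~\ref{GenRingElvsGamma} is needed, because the transvections used as conjugators have determinant $1$ and hence lie in $\Gamma_n$ outright.
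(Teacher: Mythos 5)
Your proof is correct and takes essentially the same bootstrapping approach as the paper: starting from $t_{ij}(m)\in N$, you propagate all transvections $t_{kl}(mb)$ into $N$ via the commutator identities of Lemma~\ref{tijRelations}~(ii) combined with normality of $N$. The only cosmetic difference is the last step, where the paper conjugates by a signed permutation matrix $(k,l)d\in\Gamma_n$ to pick up the remaining family of transvections, while you run one further round of commutators instead; both are valid.
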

\begin{proof}
Put $N=\langle t_{ij}(m)\rangle^{\Gamma_n}$. We prove that
$t_{kl}(m) \in N$ for all $k\neq l$. By
Lemma~\ref{tijRelations}~(ii),
\[
t_{kj}(m)= t_{ij}(m)t_{ij}(-m)^{t_{ki}}, \quad k \neq j, i;
\]
so $t_{kj}(m)\in N$. Then $t_{kl}(m) = [t_{kj}(m) , t_{jl}]\in N$
if $k,l\neq j$. Since $t_{kl}(m)=t_{lk}(-m)^{(k,l)d}$ where
$d=\mathrm{diag}(1,\ldots, 1, -1, 1, \ldots, 1)$ with $-1$ in
position $k$, this concludes the proof.
\end{proof}

\begin{proposition}\label{GammaNormalClosureElementaryGroup}
If $n\geq 3$ and $i\neq j$ then $\Gamma_{n,m} = \langle
t_{ij}(m)\rangle^{\Gamma_n} = E_{n,m}^{\Gamma_n}$ (hence
$\Gamma_{n,m} = E_{n,m}^{G_n}$).
\end{proposition}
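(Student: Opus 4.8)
The plan is to prove the two containments $E_{n,m}^{\Gamma_n}\subseteq\Gamma_{n,m}$ and $\Gamma_{n,m}\subseteq E_{n,m}^{\Gamma_n}$ between subgroups of $\Gamma_n$, and then to bootstrap from $\Gamma_n$ to $G_n$. Since Lemma~\ref{OneTransvectionImpliesAll} already supplies $\langle t_{ij}(m)\rangle^{\Gamma_n}=E_{n,m}^{\Gamma_n}$ for every $i\neq j$, only these two containments remain of the first assertion.

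The first containment is immediate: each generator $t_{ij}(a)$ of $E_{n,m}$ (with $a\in m\Z$) is killed by $\varphi_m$, hence belongs to $\Gamma_{n,m}=\ker(\varphi_m|_{\Gamma_n})$; as $\Gamma_{n,m}$ is normal in $\Gamma_n$, it contains the normal closure $E_{n,m}^{\Gamma_n}$. For the reverse containment, I would first invoke $\Gamma_n=E_n$ (Proposition~\ref{GenRingElvsGamma}), so that $E_{n,m}^{\Gamma_n}=E_{n,m}^{E_n}$ is exactly the relative elementary subgroup $E_n(\Z,m\Z)$ of $K$-theory; the claim $\Gamma_{n,m}\subseteq E_{n,m}^{\Gamma_n}$ then becomes the classical equality $\Gamma_{n,m}=E_n(\Z,m\Z)$, which holds for $n\geq 3$ but fails for $n=2$, in keeping with the failure of the CSP for $\Gamma_2$.

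To establish this equality I would appeal to two standard facts: for $n\geq 3$ the quotient $\Gamma_{n,m}/E_n(\Z,m\Z)$ is already the stable relative Whitehead group $\mathrm{SK}_1(\Z,m\Z)$ (the Bass stable rank of $\Z$ being $2$), and $\mathrm{SK}_1(\Z,m\Z)=0$. Concretely, one attempts to reduce a given $g\in\Gamma_{n,m}$ to $1_n$ by multiplying with $\Gamma_n$-conjugates of transvections $t_{ij}(m)$; the sole obstruction to success is a Mennicke symbol, which is trivial over $\Z$. I expect \emph{this} to be the crux of the proof: a self-contained argument would essentially reprove the computational heart of the congruence subgroup problem for $\slnz$, so I would instead cite \cite{BLS,Mennicke} (see also \cite{HahnOMeara}) for it.

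It remains to pass from $\Gamma_n$ to $G_n$. For $h\in G_n$ and $g\in\Gamma_{n,m}$ one has $hgh^{-1}\in\slnz$ and $\varphi_m(hgh^{-1})=\varphi_m(h)\varphi_m(g)\varphi_m(h)^{-1}=1_n$, so $\Gamma_{n,m}$ is normal in $G_n$. Hence
\[
\Gamma_{n,m}=E_{n,m}^{\Gamma_n}\leq E_{n,m}^{G_n}\leq\Gamma_{n,m}^{G_n}=\Gamma_{n,m},
\]
forcing equality throughout; in particular $\Gamma_{n,m}=E_{n,m}^{G_n}$, as required.
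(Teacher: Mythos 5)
Your proof is correct, and it takes essentially the same approach as the paper: the paper's proof is simply a citation to \cite{BLS}, \cite{BassMilnorSerre}, and \cite{Mennicke} for the substantive content, which is exactly what you do for the crucial reverse inclusion $\Gamma_{n,m}\subseteq E_{n,m}^{\Gamma_n}$. Your surrounding scaffolding — the easy containment via normality of the congruence kernel, the reduction to $\langle t_{ij}(m)\rangle^{\Gamma_n}=E_{n,m}^{\Gamma_n}$ via Lemma~\ref{OneTransvectionImpliesAll}, and the sandwich argument passing from $\Gamma_n$ to $G_n$ — is all sound and merely spells out what the paper leaves implicit.
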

\begin{proof}
See \cite{BLS}, \cite{BassMilnorSerre}, or \cite{Mennicke}.
\end{proof}
\begin{remark}
For $n,m>1$, $E_{n,m}$ is not normal in $\Gamma_n$.
\end{remark}
\begin{remark}
$E_{n,m_1} \leq E_{n,m_2} \Leftrightarrow \Gamma_{n,m_1} \leq
\Gamma_{n,m_2} \Leftrightarrow$ $m_2 \hspace*{.2mm} \big|
\hspace*{.2mm} m_1$.
\end{remark}

A PCS in $\overline{\Gamma}_{n}$ for $n\geq 3$ is the image under
$\varphi_m$ of a PCS in $\Gamma_n$.
\begin{corollary}\label{PCSForGammaOverZm}
Let $I$ be an ideal of $\Z_m$, so $\Z_m/I \cong \Z_a$ for some
divisor $a$ of $m$. If $n\geq 3$ then the kernel
$\overline{\Gamma}_{n,a}$ of $\varphi_I$ on
$\overline{\Gamma}_{n}=\SL(n,\Z_m)$ is
\[
\{ 1_n + ax \in \overline{\Gamma}_n \st x \in\mathrm{Mat}(n,\Z_m)
\} = \varphi_m(\Gamma_{n,a}) = E_{n,a}^{\overline{\Gamma}_{n}}.
\]
Furthermore, $\overline{\Gamma}_{n,a}= \langle
t_{ij}(a)\rangle^{\overline{\Gamma}_n}=\langle
t_{ij}(a)\rangle^{\overline{G}_n}$ for any $i$ and $j\neq i$.
\end{corollary}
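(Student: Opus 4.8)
The plan is to deduce everything by transporting the corresponding statement over $\Z$ — Proposition~\ref{GammaNormalClosureElementaryGroup}, with the $m$ there replaced by $a$ — forward along the congruence homomorphism $\varphi_m\colon \Gamma_n=\slnz\to \overline{\Gamma}_n=\slnm$, which is surjective by Proposition~\ref{GenRingElvsGamma} since $\Z_m$ is semi-local. The only preparatory work is to pin down $\overline{\Gamma}_{n,a}$ concretely and to check that $\varphi_m$ behaves well with respect to transvections.

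First I would record two elementary reductions. Writing $I=a\Z_m$, a matrix $h\in\overline{\Gamma}_n$ lies in $\ker\varphi_I$ precisely when every entry of $h-1_n$ lies in $I$, i.e.\ $h=1_n+ax$ with $x\in\mathrm{Mat}(n,\Z_m)$; this is the first displayed equality. Next, because $a\mid m$, reduction modulo $I$ on $\overline{\Gamma}_n$ composed with $\varphi_m$ is exactly the map $\varphi_a\colon\Gamma_n\to\SL(n,\Z_a)$, so $\Gamma_{n,a}=\ker\varphi_a=\varphi_m^{-1}(\overline{\Gamma}_{n,a})$; applying the surjection $\varphi_m$ then gives $\varphi_m(\Gamma_{n,a})=\overline{\Gamma}_{n,a}$, the second equality in the display.

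Then I would transport normal closures using two routine facts. For a surjective homomorphism $\varphi\colon G\to\overline G$ and a subset $A\subseteq G$ one has $\varphi(A^{G})=\varphi(A)^{\overline G}$, since $\varphi(g^{-1}ag)=\varphi(g)^{-1}\varphi(a)\varphi(g)$ and every element of $\overline G$ is some $\varphi(g)$; and $\varphi_m$ maps $t_{ij}(b)$ to $t_{ij}(b\bmod m)$ and carries $a\Z$ onto $I$, hence sends the elementary group of level $a\Z$ in $\Gamma_n$ onto the elementary group $E_{n,a}$ of level $I$ in $\overline{\Gamma}_n$. Feeding $\Gamma_{n,a}=\langle t_{ij}(a)\rangle^{\Gamma_n}=E_{n,a}^{\Gamma_n}$ (Proposition~\ref{GammaNormalClosureElementaryGroup}, available since $n\geq 3$) through $\varphi_m$ and combining with $\varphi_m(\Gamma_{n,a})=\overline{\Gamma}_{n,a}$ yields $\overline{\Gamma}_{n,a}=\langle t_{ij}(a)\rangle^{\overline{\Gamma}_n}=E_{n,a}^{\overline{\Gamma}_n}$.

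Finally, to replace $\overline{\Gamma}_n$ by $\overline{G}_n$ in the normal closure, I would observe that $\overline{\Gamma}_{n,a}$ is normal in $\overline{G}_n$, being the intersection of the two $\overline{G}_n$-normal subgroups $\ker(\varphi_I\colon\overline{G}_n\to\GL(n,\Z_a))$ and $\overline{\Gamma}_n$; hence $\langle t_{ij}(a)\rangle^{\overline{G}_n}\leq\overline{\Gamma}_{n,a}$, while trivially $\langle t_{ij}(a)\rangle^{\overline{\Gamma}_n}\leq\langle t_{ij}(a)\rangle^{\overline{G}_n}$, and the two inclusions close up by the previous step. I do not expect a genuine obstacle here: all the substance sits in the cited Proposition~\ref{GammaNormalClosureElementaryGroup}, and the only points needing a moment's care are the surjectivity of $\varphi_m$ and the fact that it maps $a\Z$ onto $I$ — both immediate consequences of $a\mid m$.
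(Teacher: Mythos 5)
Your proof is correct and follows exactly the paper's (implicit) approach: the paper states this as a corollary without proof, prefaced by the remark "A PCS in $\overline{\Gamma}_{n}$ for $n\geq 3$ is the image under $\varphi_m$ of a PCS in $\Gamma_n$," which is precisely the transport-along-$\varphi_m$ argument you spell out. You also correctly handle the one subtlety the paper glosses over --- that $\varphi_m$ on $\GL(n,\Z)$ need not be surjective, so the $\overline{G}_n$-normal-closure equality requires the separate normality observation rather than a direct transport of $\Gamma_{n,m}=E_{n,m}^{G_n}$.
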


\begin{proposition} \label{VenkaSury}
If $n\geq 3$ then $\Gamma_{n,m}$ has generating set
\begin{equation}\label{VenkaGenSet}
\{ t_{ij}(m)^g \st 1\leq  i< j \leq n, \, g \in \Sigma \}
\end{equation}
where
\[
\Sigma = \{ 1_n, (k,l), 1_n - 2e_{kk} - 2e_{k+1,k+1}+e_{k+1,k} \st
1 \leq k< l \leq n \}.
\]
\end{proposition}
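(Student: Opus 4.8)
The plan is to prove that the subgroup $N$ generated by the set \eqref{VenkaGenSet} is exactly $\Gamma_{n,m}$, using the identity $\Gamma_{n,m}=\langle t_{12}(m)\rangle^{\Gamma_n}$ of Proposition~\ref{GammaNormalClosureElementaryGroup}. One inclusion is free: every $g\in\Sigma$ lies in $G_n=\GL(n,\Z)$, and $\Gamma_{n,m}=E_{n,m}^{G_n}$ is normal in $G_n$, so each generator $t_{ij}(m)^g$ of $N$ lies in $\Gamma_{n,m}$, whence $N\le\Gamma_{n,m}$. Since $t_{12}(m)\in N$ (take $i=1$, $j=2$, $g=1_n$), it suffices for the reverse inclusion to show that $N$ is normal in $\Gamma_n$, for then $N\supseteq\langle t_{12}(m)\rangle^{\Gamma_n}=\Gamma_{n,m}$.

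For the normality I would first record that $E_{n,m}\le N$: conjugating the generator $t_{ij}(m)$ (with $i<j$ and $g=1_n$) by the transposition $(i,j)\in\Sigma$ yields $t_{ji}(m)$, so $N$ contains every level-$m$ transvection and hence $E_{n,m}=\langle t_{kl}(m)\st k\ne l\rangle$. By Proposition~\ref{GenRingElvsGamma}, $\Gamma_n=E_n$ is generated by the transvections $t_{kl}$ ($k\ne l$), so it is enough to check that $t_{ij}(m)^{g\,t_{kl}}$ and $t_{ij}(m)^{g\,t_{kl}^{-1}}$ lie in $N$ for every generator $t_{ij}(m)^g$ of $N$ and all $k\ne l$. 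When $g$ is $1_n$ or a transposition $(p,q)$, the element $t_{ij}(m)^g$ is a level-$m$ transvection and Lemma~\ref{tijRelations} evaluates its conjugate by $t_{kl}$: clauses~(ii) and~(iii) yield a product of level-$m$ transvections --- all in $E_{n,m}\le N$ --- possibly after a relabelling of indices realized by a transposition in $\Sigma$. When $g=w_k:=1_n-2e_{kk}-2e_{k+1,k+1}+e_{k+1,k}$, one first expands $t_{ij}(m)^{w_k}$ explicitly --- it is a short product of level-$m$ transvections, or else a $2\times2$-block matrix --- and then distributes the conjugation by $t_{kl}$ over that product, reducing again to the transvection case. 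In every case except that of clause~(i) of Lemma~\ref{tijRelations}, this is routine bookkeeping that stays inside $N$.

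The crux is clause~(i): commuting a transvection $t_{ab}(m)$ past $t_{ba}$ produces the non-unipotent matrix $1_n+e_{ab}(m^2)-e_{ba}(m)+e_{aa}(m+m^2)-e_{bb}(m)$, supported on the $\{a,b\}$-block, which in general is not a product of level-$m$ transvections. (Indeed $E_{n,m}\subsetneq\Gamma_{n,m}$ for $m>1$: otherwise the normality of $\Gamma_{n,m}$ in $\Gamma_n$ would make $E_{n,m}$ normal, contradicting the Remark after Proposition~\ref{GammaNormalClosureElementaryGroup}.) This is exactly why $\Sigma$ must contain the non-unipotent elements $w_k$. Exploiting $n\ge3$, I would transport the pair $\{a,b\}$ to a consecutive pair by a transposition in $\Sigma$ and then, bringing in a third index, rewrite the offending $\{a,b\}$-block element as a product of level-$m$ transvections together with conjugates $t_{pq}(m)^{w_k}$ --- a bounded identity inside $\SL(3,\Z)$ with no analogue for $n=2$ (reflecting the failure of the congruence subgroup property for $\Gamma_2$). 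The hard part will be precisely this step and the accompanying bookkeeping: checking that conjugating the $w_k$-generators of $N$ by transvections and transpositions never escapes $N$, so that the finite set $\Sigma$ really is large enough for every conjugate to close up. Once that is done, $N$ is a normal subgroup of $\Gamma_n$ containing $t_{12}(m)$, and therefore $N=\Gamma_{n,m}$.
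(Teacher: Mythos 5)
The paper itself does not supply a proof of Proposition~\ref{VenkaSury}; it simply cites Sury--Venkataramana~\cite{Venka}. So there is no in-paper argument to compare yours against, and I am judging the proposal on its own merits.

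Your overall strategy is the natural one: let $N$ be the subgroup generated by~\eqref{VenkaGenSet}, observe $N\leq\Gamma_{n,m}$ by normality of $\Gamma_{n,m}$ in $G_n$, note $t_{12}(m)\in N$, and reduce the reverse inclusion to proving $N\unlhd\Gamma_n$ via Proposition~\ref{GammaNormalClosureElementaryGroup}. The reduction of normality to checking $(t_{ij}(m)^g)^{t_{kl}^{\pm1}}\in N$ over generators is also correct, and the observation that $E_{n,m}\leq N$ (because the transpositions in $\Sigma$ produce all $t_{ji}(m)$) is a sound first step. So the plan is not wrong in outline.

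However, the proposal is not a proof: its entire content is in the sentence that begins ``The hard part will be precisely this step and the accompanying bookkeeping,'' which is exactly the part that is left undone. Concretely, you never exhibit the identities that put $(t_{ij}(m)^{c_k})^{t_{pq}}$, nor the clause~(i) product $t_{ab}(m)^{t_{ba}}$, back inside $N$. Without those identities the claim ``$N$ is normal'' is an aspiration, not an argument. The ``transport to a consecutive pair by a transposition in $\Sigma$'' step is also not licensed as stated: elements of $\Sigma$ are conjugators appearing in the \emph{definition} of the generating set, not elements known to normalize $N$, so you cannot carry an element of $N$ to another by a transposition without first having established normality of $N$ under $\GL(n,\Z)$ --- which is essentially what you are trying to prove. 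What you actually need for the non-consecutive case is an explicit expression of the clause~(i) residue as a word in level-$m$ transvections and the $c_k$-conjugates, using a third index (this is where $n\geq3$ is essential). Calculations of precisely this flavour appear in the paper's proof of Proposition~\ref{Excellent} (see equations~\eqref{formula6}--\eqref{formula11}), but there they are used to derive $\Gamma_{n,m^2}\leq E_{n,m}$ \emph{from} Proposition~\ref{VenkaSury}, not to prove it; you would have to work out the analogous identities from scratch. Until that is done, the proposal has a genuine gap at its central step.
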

\begin{proof}
See \cite{Venka}.
\end{proof}

We emphasize that the number of generators in \eqref{VenkaGenSet}
does not depend on $m$. The minimal size of a generating set for
$\Gamma_{n,m}$ is unknown. However, by Lemma~\ref{MiNiGensets}
below, this size can be no less than $n^2-1$. As Professor
A.~Lubotzky has pointed out to us, \cite[Theorem~1]{VenkaSharma}
and Lemma~\ref{MiNiGensets} imply that $\Gamma_{n,m}$ has a
generating set of size $n^2+2$. In \cite{Lubotzky86} it is
conjectured that $\Gamma_{n,m}$ for $n\geq 3$ contains a
$2$-generator subgroup of finite index
(cf.~\cite[p.~412]{LongReid}). This conjecture has
been settled affirmatively (see \cite{Meiri}), 
so $\Gamma_{n,m}$ is $(n^2+1)$-generated.

Let $\mathrm{min}(H)$ denote the minimal size of a generating set
of $H$. Although $\mathrm{min}(H)$ can be arbitrarily large
\cite[pp.~355--356]{Venka}, we have
\begin{lemma}\label{Eff2}
Suppose that $n\geq 3$ and $\Gamma_{n,m} \leq H \leq \Gamma_n$.
Then $\mathrm{min}(H)$ is bounded above by a function of $n$, $m$
only.
\end{lemma}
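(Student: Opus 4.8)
The plan is to exploit the fact that the hypothesis $\Gamma_{n,m}\leq H$ pins down the index $[\Gamma_n:H]$ in terms of $n$ and $m$, and then to invoke a uniform bound on the number of generators of a finite-index subgroup. First I would observe that $\Gamma_{n,m}$ is the kernel of $\varphi_m$, and by Proposition~\ref{GenRingElvsGamma} the map $\varphi_m$ carries $\Gamma_n=\slnz$ onto $\slnm$; hence $[\Gamma_n:\Gamma_{n,m}]=|\slnm|$, a finite quantity determined by $n$ and $m$. Consequently $[\Gamma_n:H]\leq|\slnm|$.

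To finish, I would appeal to Reidemeister--Schreier: a subgroup of index $k$ in a $d$-generated group has a generating set of size $k(d-1)+1$. Since $\Gamma_n$ is $2$-generated (Section~\ref{CongSubgroupGens}), taking $d=2$ and $k=[\Gamma_n:H]$ yields
\[
\mathrm{min}(H)\ \leq\ [\Gamma_n:H]+1\ \leq\ |\slnm|+1,
\]
a bound depending only on $n$ and $m$, as required. A variant avoids Reidemeister--Schreier: $\Gamma_{n,m}$ is normal in $H$, the quotient $H/\Gamma_{n,m}$ embeds in $\slnm$ and so is generated by at most $\log_2|\slnm|$ elements, while $\Gamma_{n,m}$ itself is generated by a set of size $n(n-1)|\Sigma|/2$ by Proposition~\ref{VenkaSury}; lifting generators of the quotient into $H$ and adjoining generators of $\Gamma_{n,m}$ gives $\mathrm{min}(H)\leq\log_2|\slnm|+n(n-1)|\Sigma|/2$.

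There is no substantial obstacle: the entire content is the elementary observation that $\Gamma_{n,m}\leq H$ bounds $[\Gamma_n:H]$. The only point requiring care is to quote a generation bound that is uniform in exactly the parameters advertised --- Reidemeister--Schreier in the first argument, and (in the second) the fact emphasized above that the generating set of Proposition~\ref{VenkaSury} has size independent of $m$ --- so that the resulting estimate really is a function of $n$ and $m$ alone.
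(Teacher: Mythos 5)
Both of your arguments are correct, and your second variant is essentially the paper's own proof: the paper's one-line proof cites exactly the inequality $|H:\Gamma_{n,m}|\leq|\SL(n,\Z_m)|$ and Proposition~\ref{VenkaSury}, the point being that $\Gamma_{n,m}$ has a generating set whose size depends only on $n$, while the quotient $H/\Gamma_{n,m}$ is a finite group of order bounded in terms of $n$ and $m$ and hence needs only boundedly many lifted generators. Your first, Reidemeister--Schreier variant is a genuinely different route: it instead bounds $[\Gamma_n:H]$ by $|\SL(n,\Z_m)|$ and quotes the $2$-generation of $\Gamma_n$ from Section~\ref{CongSubgroupGens}, bypassing Proposition~\ref{VenkaSury} altogether. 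That version is even more elementary and self-contained, at the cost of a much cruder numerical bound ($|\SL(n,\Z_m)|+1$ rather than something of size $\log_2|\SL(n,\Z_m)|+O(n^3)$); since the lemma asks only for \emph{some} function of $n$ and $m$, either estimate is enough, and indeed the sharper route is what the surrounding discussion (on minimal generating sets and the conjecture attributed to Lubotzky) actually cares about.
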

\begin{proof}
This is clear from Proposition~\ref{VenkaSury} and the fact that
$|H : \Gamma_{n,m}|\leq |\SL(n,\Z_m)|$.
\end{proof}

\subsection{Constructing a PCS in an arithmetic subgroup}

Let $n\geq 3$. Our overall strategy rests on knowing some
$\Gamma_{n,m}$ in the arithmetic group $H\leq \Gamma_n$. We show
that such a PCS can always be constructed.
\begin{proposition}\label{Excellent}
$\Gamma_{n,m^2}\le E_{n,m}$, so $|\Gamma_n: E_{n,m}|$ is finite.
\end{proposition}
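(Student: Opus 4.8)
The plan is to reduce the statement to a finite, explicit verification via Proposition~\ref{VenkaSury}. If $m=1$ there is nothing to prove, so assume $m\ge 2$; since $n\ge 3$ we have $\Gamma_n=E_n$. By Proposition~\ref{VenkaSury}, $\Gamma_{n,m^2}$ is generated by the elements $t_{ij}(m^2)^{g}$ with $1\le i<j\le n$ and $g\in\Sigma$, so it is enough to show that each of these (finitely many) generators lies in $E_{n,m}$. Granting that, $\Gamma_{n,m^2}\le E_{n,m}\le\Gamma_n$ together with $|\Gamma_n:\Gamma_{n,m^2}|<\infty$ (it divides $|\GL(n,\Z_{m^2})|$) yields the index claim.

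Two of the three families of generators are immediate. For $g=1_n$, pick $\ell\notin\{i,j\}$ (possible since $n\ge 3$); then $t_{ij}(m^2)=[t_{i\ell}(m),t_{\ell j}(m)]\in E_{n,m}$ by Lemma~\ref{tijRelations}(ii). For $g=(k,l)$ a transposition, conjugation by the permutation matrix carries $t_{ij}(m^2)$ to a transvection $t_{i'j'}(m^2)$, which is in $E_{n,m}$ by the previous case. So only the generators $t_{ij}(m^2)^{u_k}$, where $u_k:=1_n-2e_{kk}-2e_{k+1,k+1}+e_{k+1,k}$, need attention.

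Because $u_k$ agrees with $1_n$ outside the $2\times 2$ block on coordinates $k,k+1$, a direct computation with matrix units shows that $t_{ij}(m^2)^{u_k}$ is a product of at most two transvections $t_{ab}(\pm m^2)$ — hence lies in $E_{n,m^2}\subseteq E_{n,m}$ — in every case except $(i,j)=(k,k+1)$. The one delicate generator is $h:=t_{k,k+1}(m^2)^{u_k}$, which equals $1_n$ off the $(k,k+1)$-block, where it is $\left(\begin{smallmatrix}1+m^2 & m^2\\-m^2 & 1-m^2\end{smallmatrix}\right)$. For this I would introduce an auxiliary coordinate $\ell\notin\{k,k+1\}$ — available precisely because $n\ge 3$ — and run an explicit ``relative Euclidean'' reduction: a short product of row operations drawn from $t_{\ell k}(\pm m)$, $t_{\ell,k+1}(\pm m)$, $t_{k\ell}(\pm m)$, $t_{k+1,\ell}(\pm m)$ clears column $k$ of $h$ to $e_k$; one further column operation $t_{k\ell}(m)$ then clears row $k$, leaving the block $\left(\begin{smallmatrix}1 & m\\ m & 1+m^2\end{smallmatrix}\right)$ on coordinates $\{k+1,\ell\}$, which equals $t_{\ell,k+1}(m)\,t_{k+1,\ell}(m)$. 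Reversing this chain writes $h$ as a product of level-$m$ transvections, so $h\in E_{n,m}$.

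The crux — and the only place where any real work is needed — is this last reduction: the $\pm m^2$ entries must be eliminated by operations whose off-diagonal entries are only divisible by $m$, so the steps have to be arranged so that the ``extra'' factor $m$ carried by each entry is exactly what the Euclidean reduction absorbs. This is also where $n\ge 3$ is indispensable: for $n=2$ there is no auxiliary coordinate, and correspondingly $E_{2,m}$ has infinite index in $\Gamma_2$. Everything else is bookkeeping, and since the number of generators to test depends only on $n$, the argument is uniform in $m$.
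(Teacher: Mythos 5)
Your proposal is correct and follows essentially the same approach as the paper: reduce via Proposition~\ref{VenkaSury} to the finitely many conjugates $t_{ij}(m^2)^g$, dispatch $g=1_n$ and $g=(k,l)$ immediately, observe that for $g=u_k$ all but the case $(i,j)=(k,k+1)$ give a product of at most two transvections of level $m^2\in m\Z$, and resolve the remaining case by writing that one conjugate as a short word in level-$m$ transvections using an auxiliary index $\ell$ (the paper's formula~\eqref{formula6} is exactly such a word, packaged as a commutator). Your handling of the ``easy'' $u_k$-cases is in fact slightly cleaner than the paper's explicit formulas~\eqref{formula7}--\eqref{formula11}; just note that the $2\times 2$ block of $t_{k,k+1}(m^2)^{u_k}$ under the convention $a^g=g^{-1}ag$ is $\bigl(\begin{smallmatrix}1-m^2&m^2\\-m^2&1+m^2\end{smallmatrix}\bigr)$ rather than the matrix you wrote (you appear to have conjugated by $u_k^{-1}$), though the Euclidean reduction works either way.
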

\begin{proof}
Let $p_{ij}=\abk t_{ij}(m)$ and $s_{ij}=t_{ij}(m^2)$. Then
$\Gamma_{n,m^2}$ is generated by the $s_{ij}$ for $i<\abk j$ and
their conjugates as in Proposition~\ref{VenkaSury}. 
Our goal is to
prove that these all lie in $E_{n,m}$, i.e., that they can be
expressed as words in the $p_{ij}$. Since $s_{ij}^{(k,l)}=
p_{i'j'}^m$ where $i'=i^{(k,l)}$ and $j'=j^{(k,l)}$, it suffices
to look at conjugation by
$c_l=1_n-2e_{ll}-2e_{l+1,l+1}+e_{l+1,l}$ for $l<n$. Furthermore,
if $l,l+1\not\in\{i,j\}$ then $s_{ij}$ and $c_l$ commute: thus it
suffices to consider conjugation of $s_{ij}$ by $c_i$, $c_{i-1}$,
$c_j$, $c_{j-1}$.

First we suppose that the conjugating element has subscript $i$ or
$i-1$. For $j=i+1$ and $a\not \in \abk \{i,i+1\}$,
\begin{equation}
s_{ij}^{c_i} = p_{ai}^{-1}p_{aj} p_{ia}^{-1}p_{ja}^{-1}
p_{aj}^{-1}p_{ai} p_{ja}p_{ia} = [ p_{aj}^{-1}p_{ai},p_{ja}p_{ia}
]. \label{formula6}
\end{equation}
If $j\not=i+1$ we have
\begin{equation}
s_{ij}^{c_i}=(p_{i+1,j}^{-1})^{m-1}p_{i,i+1}p_{i+1,j}^{-1}p_{i,i+1}^{-1}.
\label{formula7}
\end{equation}
For $j\neq i-1$,
\begin{equation}
s_{ij}^{c_{i-1}}=p_{i,i-1}p_{i-1,j}^{-1}p_{i,i-1}^{-1}p_{i-1,j} =[
p_{i,i-1}^{-1},p_{i-1,j} ], \label{formula8}
\end{equation}
while $s_{i,i-1}$ and $c_{i-1}$ commute.

Now suppose that the subscript of the conjugating element is $j$ or
$j-1$. For $j\not=i+1$,
\begin{equation}
s_{ij}^{c_{j-1}}=p_{j-1,j}p_{i,j-1}p_{j-1,j}^{-1}p_{i,j-1}^{m-1}.
\label{formula10}
\end{equation}
If $j=i+1$ then $c_{j-1}=c_i$ and \eqref{formula6} applies.

If $i\not=j+1$ then
\begin{equation}
s_{ij}^{c_j}=p_{j+1,j}^{-1}p_{i,j+1}^{-1}p_{j+1,j}p_{i,j+1}
=\left[ p_{j+1,j},p_{i,j+1} \right], \label{formula11}
\end{equation}
and if $i=j+1$, again as noted above, $s_{ij}=s_{i,i-1}$ and
$c_j=c_{i-1}$ commute.
\end{proof}

The group $\Gamma_n$ has a (finite) presentation $\langle t_{ij} ,
1\leq i, j \leq n, \, i\neq j \st \mathcal{R} \rangle$ where
$\mathcal{R}$ consists of all commutator relations
$[t_{ij},t_{km}] =1$, $[t_{ij},t_{jk}]=t_{ik}$ from
Lemma~\ref{tijRelations} (ii) and (iii), with a single extra
relation $(t_{12}t_{21}^{-1}t_{12})^4=1$ \cite[Corollary
10.3]{Milnor}.
\begin{lemma}\label{CanFindIndexOfFIHinGamma}
Given $H\fis \Gamma_n$ we can find an elementary group in $H$.
\end{lemma}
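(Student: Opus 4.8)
The plan is to realise $H$ as a point stabiliser in a finite permutation quotient of $\Gamma_n$, and then extract a level $m$ with $E_{n,m}\le H$ from that quotient by a short commutator argument.

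First I would rewrite the given matrix generators of $H$ as words in the transvections $t_{ij}$. This is effective: since $\Gamma_n=E_n$ (Proposition~\ref{GenRingElvsGamma}), any $A\in\Gamma_n$ can be reduced to $1_n$ by a sequence of elementary row and column operations — the integer Euclidean algorithm applied to the entries of $A$ — and recording these operations expresses $A$ explicitly as a word in the $t_{ij}^{\pm1}$ (recall $t_{ij}(c)=t_{ij}^{\,c}$, as $t_{ij}(a)t_{ij}(b)=t_{ij}(a+b)$). Feeding these words for the generators of $H$, together with the finite presentation $\langle t_{ij}\st\mathcal R\rangle$ of $\Gamma_n$ recalled above, to a Todd--Coxeter coset enumeration, the enumeration terminates because $r:=|\Gamma_n:H|$ is finite; it returns $r$ together with the coset table, i.e.\ the transitive permutation representation $\rho\colon\Gamma_n\to\mathrm{Sym}(r)$ on the right cosets of $H$. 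By construction $H=\rho^{-1}(\mathrm{Sym}(r)_1)$, so in particular $\ker\rho\le H$.

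Now let $m$ be the least common multiple of the orders of the $2(n-1)$ permutations $\rho(t_{1j})$ and $\rho(t_{j1})$, $2\le j\le n$, computed directly in $\mathrm{Sym}(r)$. I claim $E_{n,m}\le\ker\rho\le H$. Indeed $E_{n,m}=\langle t_{ij}(m)\st i\neq j\rangle$; for the $2(n-1)$ transvections with one subscript equal to $1$ we have $\rho(t_{1j}(m))=\rho(t_{1j})^m=1$ and $\rho(t_{j1}(m))=1$ by the choice of $m$, and for each remaining transvection Lemma~\ref{tijRelations}(ii) gives $t_{ij}(m)=[t_{i1}(m),t_{1j}]$ (valid since $n\ge3$ and $i,1,j$ are pairwise distinct), whence $\rho(t_{ij}(m))=[\rho(t_{i1})^m,\rho(t_{1j})]=[1,\rho(t_{1j})]=1$. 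Thus every generator of $E_{n,m}$ lies in $\ker\rho$, proving the claim; and $|\Gamma_n:H|$ has been computed along the way.

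The only genuinely non-trivial ingredient is the coset enumeration: Todd--Coxeter is merely a partial procedure in general, and we rely on the hypothesis $H\fis\Gamma_n$ to guarantee termination, so the method is effective but carries no a priori complexity bound. The supporting pieces — integer row reduction turning the matrix generators of $H$ into transvection words, and the commutator identities of Lemma~\ref{tijRelations} that make the choice of $m$ do its job — are routine.
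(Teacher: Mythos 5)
Your proof is correct and uses the same two main ingredients as the paper: rewrite the generators of $H$ as words in the transvections $t_{ij}$, then run Todd--Coxeter (which terminates because $|\Gamma_n:H|<\infty$) on the resulting finite presentation data. Where you diverge is in how the level is extracted. The paper is content with the crude bound $l=\mathrm{lcm}\{1,\ldots,m\}$, $m=|\Gamma_n:H|$: since the orbit of the base coset $H$ under each $\langle t_{ij}\rangle$ has length at most $m$, the power $t_{ij}(l)=t_{ij}^{\,l}$ stabilizes that coset, hence lies in $H$, so $E_{n,l}\le H$. You instead keep the permutation image $\rho$, take $m$ to be the lcm of the orders of the $2(n-1)$ permutations $\rho(t_{1j})$, $\rho(t_{j1})$, and kill all remaining $\rho(t_{ij}(m))$ via $t_{ij}(m)=[t_{i1}(m),t_{1j}]$ from Lemma~\ref{tijRelations}(ii). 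This buys two things: a typically much smaller level (orders of actual permutations rather than the a priori $\mathrm{lcm}\{1,\ldots,m\}$, which grows like $e^m$), and the sharper conclusion $E_{n,m}\le\ker\rho$, i.e.\ the elementary group you find lies in the normal core of $H$ rather than merely in $H$. Both conclusions suffice for the downstream use (Corollary~\ref{FindingPCSIsDecidable}); your version is closer to what a practical implementation would want.
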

\begin{proof}
Express each generator of $H$ as a product of transvections (for
which see, e.g., \cite[p.~99]{Humphreys}). Then the Todd-Coxeter
procedure with input $\Gamma_n$ and $H$ terminates, returning
$m=|\Gamma_n:H|$. So for all $i$, $j$ and known $l$ we have
$t_{ij}(l)= \abk t_{ij}(1)^{l}\in \abk H$ 
($l=\abk \mathrm{lcm}\{1, \ldots , m \}$ say). 
Hence $E_{n,l}\leq H$.
\end{proof}

Using Proposition~\ref{Excellent}, we rescue one item (slightly
generalized) from the proof of
Lemma~\ref{CanFindIndexOfFIHinGamma}.
\begin{lemma}\label{CfLS}
If $\, |\Gamma_n:H|\leq m$ then $\Gamma_{n,l^2}\leq H$ where $l
=\mathrm{lcm} \{ 1, \ldots , m\}$.
\end{lemma}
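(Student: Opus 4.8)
The plan is to combine the two tools we have just assembled: the index bound $|\Gamma_n : H| \leq m$ gives, via Todd--Coxeter as in the proof of Lemma~\ref{CanFindIndexOfFIHinGamma}, that $t_{ij}(1)^l \in H$ for every $i \neq j$, where $l = \mathrm{lcm}\{1,\dots,m\}$; hence $E_{n,l} \leq H$. Then Proposition~\ref{Excellent}, applied with the integer $l$ in place of $m$, yields $\Gamma_{n,l^2} \leq E_{n,l}$. Chaining the two inclusions gives $\Gamma_{n,l^2} \leq E_{n,l} \leq H$, which is the claim.

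First I would spell out why $t_{ij}(1)^l \in H$. Running the Todd--Coxeter coset enumeration with the finite presentation $\langle t_{ij} \mid \mathcal{R}\rangle$ of $\Gamma_n$ recorded above and the (transvection-expressed) generators of $H$, the enumeration terminates because $|\Gamma_n : H| \leq m < \infty$, and it returns the actual index $d := |\Gamma_n : H| \leq m$. For each fixed pair $i \neq j$, the cyclic subgroup $\langle t_{ij}(1) \rangle$ meets $H$ in a subgroup of index at most $d \leq m$, so $t_{ij}(1)^{d'} \in H$ for some $d' \leq m$; since $d' \mid l$ we get $t_{ij}(1)^l = \bigl(t_{ij}(1)^{d'}\bigr)^{l/d'} \in H$. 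As this holds for all $i \neq j$, the generators of $E_{n,l}$ all lie in $H$, so $E_{n,l} \leq H$.

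Second, Proposition~\ref{Excellent} is stated for an arbitrary modulus: replacing $m$ there by $l$ gives $\Gamma_{n,l^2} \leq E_{n,l}$ directly (its proof expresses each Venkataramana generator $t_{ij}(l^2)^g$ of $\Gamma_{n,l^2}$, and hence all of $\Gamma_{n,l^2}$, as a word in the $t_{ij}(l)$). Combining, $\Gamma_{n,l^2} \leq E_{n,l} \leq H$, as required.

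I do not expect a genuine obstacle here: the lemma is essentially a repackaging of the ingredients already in hand. The only point needing a word of care is that Todd--Coxeter is only guaranteed to terminate when the index is finite — which is exactly the hypothesis $|\Gamma_n:H|\leq m$ — and that we may pass from "$t_{ij}(1)$ has some power in $H$" to "$t_{ij}(1)^l \in H$" uniformly, for which the choice $l = \mathrm{lcm}\{1,\dots,m\}$ is precisely what makes the exponents line up.
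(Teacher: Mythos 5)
Your proposal is correct and follows the same route the paper intends: extract $E_{n,l}\leq H$ from the argument in the proof of Lemma~\ref{CanFindIndexOfFIHinGamma} (replacing the exact index by the bound $m$, since the index of $\langle t_{ij}(1)\rangle\cap H$ in the infinite cyclic $\langle t_{ij}(1)\rangle$ is at most $|\Gamma_n:H|\leq m$ and hence divides $l$), then apply Proposition~\ref{Excellent} with modulus $l$ to obtain $\Gamma_{n,l^2}\leq E_{n,l}\leq H$. The Todd--Coxeter framing is unnecessary for the purely group-theoretic inclusion, but it matches the paper's presentation and does no harm.
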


Proposition~\ref{Excellent} and
Lemma~\ref{CanFindIndexOfFIHinGamma} yield the promised
\begin{corollary}\label{FindingPCSIsDecidable}
Construction of a PCS in $H\fis \Gamma_n$ is decidable.
\end{corollary}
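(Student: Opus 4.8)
The plan is to assemble the corollary from Lemma~\ref{CanFindIndexOfFIHinGamma} and Proposition~\ref{Excellent} — in effect, to turn Lemma~\ref{CfLS} into an algorithm. I assume $H$ is presented by a finite set of matrices in $\Gamma_n$, together with the promise that $|\Gamma_n:H|<\infty$; the task is to exhibit an explicit integer $m'$ with $\Gamma_{n,m'}\leq H$, and a generating set for that PCS.

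First I would make the input presentation-friendly: rewrite each matrix generator of $H$ as a word in the transvections $t_{ij}$ by integer row reduction (as in \cite[p.~99]{Humphreys}), which is effective because $\Z$ is Euclidean. Since $\Gamma_n$ has the explicit finite presentation $\langle t_{ij}\mid \mathcal{R}\rangle$ recorded above — the commutator relations of Lemma~\ref{tijRelations}(ii),(iii) together with $(t_{12}t_{21}^{-1}t_{12})^4=1$ — I can run coset enumeration (Todd--Coxeter) on $\Gamma_n$ relative to the subgroup generated by these words. By hypothesis the index is finite, so the enumeration terminates and returns $m=|\Gamma_n:H|$.

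Next, set $l=\mathrm{lcm}\{1,\dots,m\}$. For each $i\neq j$ the cyclic group $\langle t_{ij}\rangle\cong\Z$ permutes the $m$ cosets of $H$ in $\Gamma_n$, so the stabilizer of the trivial coset is $\langle t_{ij}^{\,d}\rangle$ for some $d\leq m$; hence $t_{ij}(l)=t_{ij}^{\,l}\in H$ for all $i\neq j$, and therefore $E_{n,l}\leq H$. Proposition~\ref{Excellent} now gives $\Gamma_{n,l^2}\leq E_{n,l}\leq H$, so $m'=l^2$ works. Finally I would output the PCS $\Gamma_{n,l^2}$ via the generating set of Proposition~\ref{VenkaSury} with $m$ replaced by $l^2$, making the constructed subgroup explicit.

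The one genuinely load-bearing point — and the place where an implementation could in principle stall — is termination of the coset enumeration; this is guaranteed precisely because $\Gamma_n$ is finitely presented (so Todd--Coxeter applies at all) and $|\Gamma_n:H|<\infty$. Everything else (the row reduction, forming $l$, instantiating Proposition~\ref{VenkaSury}) is routine bookkeeping, and the nontrivial mathematics — the transvection identities bounding $|\Gamma_n:E_{n,m}|$ — has already been carried out in Proposition~\ref{Excellent}.
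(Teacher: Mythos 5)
Your proposal is correct and follows essentially the same route as the paper: it combines Lemma~\ref{CanFindIndexOfFIHinGamma} (rewrite generators in transvections, run Todd--Coxeter to get $m=|\Gamma_n:H|$, then conclude $E_{n,l}\leq H$ for $l=\mathrm{lcm}\{1,\dots,m\}$) with Proposition~\ref{Excellent} ($\Gamma_{n,l^2}\leq E_{n,l}$), exactly as the paper does. The stabilizer-of-a-coset justification for $t_{ij}^{\,l}\in H$ and the final appeal to Proposition~\ref{VenkaSury} for an explicit generating set are welcome elaborations but do not change the argument.
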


\subsection{Maximal congruence subgroups}

In this subsection $n\geq 3$ and $G_n= \abk \glnz$.
\begin{lemma}\label{GCD}
Let $m_1$, $m_2$ be positive integers, $m= \mathrm{gcd}(m_1,m_2)$,
and $l = \mathrm{lcm}(m_1,m_2)$. Then
\begin{itemize}
\item[{\rm (i)}]  $\Gamma_{n,m_1}\Gamma_{n,m_2}= \Gamma_{n,m}$.
\item[{\rm (ii)}] $\Gamma_{n,m_1}\cap
\Gamma_{n,m_2}=\Gamma_{n,l}$.
\end{itemize}
\end{lemma}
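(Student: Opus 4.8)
The plan is to treat the two parts separately; (ii) is essentially an entrywise calculation, while (i) uses the description of $\Gamma_{n,m}$ as a normal closure.

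For (ii), I would argue directly on matrix entries. A matrix $A$ lies in $\Gamma_{n,m_1}\cap\Gamma_{n,m_2}$ exactly when every off-diagonal entry of $A$, and every diagonal entry of $A-1_n$, is divisible by both $m_1$ and $m_2$, hence by $l=\mathrm{lcm}(m_1,m_2)$; that is, $A\in\Gamma_{n,l}$. Conversely $m_1\mid l$ and $m_2\mid l$ give $\Gamma_{n,l}\leq\Gamma_{n,m_1}$ and $\Gamma_{n,l}\leq\Gamma_{n,m_2}$ (equivalently, invoke the Remark characterizing inclusion of PCSs by divisibility of levels). This settles (ii) with no real obstacle.

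For (i), first observe that $\Gamma_{n,m_1}$ is normal in $\Gamma_n$ (it is $\ker\varphi_{m_1}$), so $K:=\Gamma_{n,m_1}\Gamma_{n,m_2}$ is a subgroup, and in fact normal in $\Gamma_n$ since both factors are. The inclusion $K\leq\Gamma_{n,m}$ is immediate because $m\mid m_1$ and $m\mid m_2$ force $\Gamma_{n,m_i}\leq\Gamma_{n,m}$. For the reverse inclusion I would exhibit a single transvection $t_{ij}(m)$ in $K$ and then inflate by normality: picking $a,b\in\Z$ with $am_1+bm_2=m$ (B\'ezout) and using additivity $t_{ij}(x)t_{ij}(y)=t_{ij}(x+y)$ of transvections in a fixed position, we get
\[
t_{ij}(m)=t_{ij}(am_1)\,t_{ij}(bm_2)=t_{ij}(m_1)^{a}\,t_{ij}(m_2)^{b}\in E_{n,m_1}E_{n,m_2}\leq K.
\]
Since $K$ is normal in $\Gamma_n$, it contains $\langle t_{ij}(m)\rangle^{\Gamma_n}$, which equals $\Gamma_{n,m}$ by Proposition~\ref{GammaNormalClosureElementaryGroup} (this is where $n\geq 3$ is used). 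Hence $\Gamma_{n,m}\leq K$, and combined with the easy inclusion we obtain equality.

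The only step needing care — the \emph{main obstacle}, such as it is — is the reverse inclusion in (i): one should resist decomposing a general element of $\Gamma_{n,m}$ as a product, and instead use that $\Gamma_{n,m}$ is generated \emph{as a normal subgroup} of $\Gamma_n$ by the single transvection $t_{ij}(m)$, which reduces everything to the one-line B\'ezout identity above.
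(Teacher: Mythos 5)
Your proof is correct and follows essentially the same route as the paper: part (ii) is the easy divisibility/CRT argument, and part (i) uses the B\'ezout identity $am_1+bm_2=m$ to place $t_{ij}(m)$ in the product and then invokes Proposition~\ref{GammaNormalClosureElementaryGroup}. The only cosmetic difference is that you note normality of $\Gamma_{n,m_1}\Gamma_{n,m_2}$ up front and then cite it once, while the paper applies the same B\'ezout identity to each conjugate $t_{ij}(m)^x$ directly; these are the same argument.
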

\begin{proof}
(i) \hspace*{-.15mm} For $x\in \Gamma_n$ and integers $a$, $b$
such that $am_1+bm_2 = m$,
\[
t_{ij}(m)^x = (t_{ij}(m_1)^x)^a\cdot (t_{ij}(m_2)^x)^b \in
\Gamma_{n,m_1}\Gamma_{n,m_2}.
\]
Thus $\Gamma_{n,m}=\Gamma_{n,m_1}\Gamma_{n,m_2}$ by
Proposition~\ref{GammaNormalClosureElementaryGroup}.

(ii) \hspace*{-.15mm} Certainly $\Gamma_{n,l} \leq
\Gamma_{n,m_1}\cap \Gamma_{n,m_2}$. The reverse containment is
just the Chinese Remainder Theorem.
\end{proof}

\begin{corollary}\label{MaxPrincipalCS}
If $H\fis G_n$ then $H$ contains a unique maximal PCS (of $\,
\Gamma_n$): there is a positive integer $m$ such that
$\Gamma_{n,m}\leq H$, and $\Gamma_{n,k} \leq \abk H \Rightarrow
\Gamma_{n,k} \leq \abk \Gamma_{n,m}$.
\end{corollary}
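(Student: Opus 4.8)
The plan is to combine the existence result from Corollary~\ref{FindingPCSIsDecidable} (or more directly from Lemma~\ref{CfLS}) with the lattice properties of PCSs recorded in Lemma~\ref{GCD}. First I would note that since $H\fis G_n$, we also have $H\cap\Gamma_n\fis\Gamma_n$, so by Lemma~\ref{CfLS} there exists at least one positive integer $k$ with $\Gamma_{n,k}\le H$; hence the set $D=\{k\in\Z_{>0}:\Gamma_{n,k}\le H\}$ is nonempty. The goal is to show $D$ has a unique ``largest'' element in the divisibility order — meaning a single $m\in D$ that every $k\in D$ divides — and that the corresponding $\Gamma_{n,m}$ contains every PCS lying in $H$.

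The key step is closure of $D$ under least common multiples: if $k_1,k_2\in D$ then $\Gamma_{n,k_1}\le H$ and $\Gamma_{n,k_2}\le H$, so the subgroup generated by both lies in $H$; but by Lemma~\ref{GCD}(i) this product is $\Gamma_{n,\gcd(k_1,k_2)}$, so $\gcd(k_1,k_2)\in D$. Now let $m=\gcd\{k:k\in D\}$. This gcd is attained as the gcd of finitely many elements of $D$ (the prime factorizations involved are bounded), and by iterating the previous observation $m\in D$, i.e.\ $\Gamma_{n,m}\le H$. By construction $m$ divides every element of $D$, so by the third Remark after Proposition~\ref{GammaNormalClosureElementaryGroup} (namely $\Gamma_{n,k}\le\Gamma_{n,m}\Leftrightarrow m\mid k$) we get $\Gamma_{n,k}\le\Gamma_{n,m}$ for all $k\in D$. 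That is exactly the maximality statement, and uniqueness is immediate since any two maximal such subgroups would each contain the other.

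The only mild subtlety — and the thing to be careful about rather than a genuine obstacle — is justifying that the infimum $m=\gcd\{k:k\in D\}$ is actually realized inside $D$ rather than merely being a divisor of everything in $D$. This is handled by the standard finiteness argument: fix any $k_0\in D$; then $m=\gcd\{\gcd(k_0,k):k\in D\}$ and each such $\gcd(k_0,k)$ is a divisor of the fixed integer $k_0$, so only finitely many distinct values occur, and a finite iteration of Lemma~\ref{GCD}(i) places their overall gcd in $D$. With that in hand the rest is bookkeeping. I would present the argument in roughly that order: produce one PCS in $H$, show $D$ is closed under gcd, extract the minimal $m$, then invoke the divisibility Remark for the containment conclusion.
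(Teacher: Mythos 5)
Your proposal is correct, and it is essentially the argument the paper intends: existence of some PCS in $H$ (via Lemma~\ref{CfLS} applied to $H\cap\Gamma_n\fis\Gamma_n$), closure of the set of levels $D$ under gcd via Lemma~\ref{GCD}(i), and then extraction of the maximal PCS by the divisibility characterization. The paper simply records this as a corollary of Lemma~\ref{GCD} without spelling it out, and you have supplied the careful finiteness bookkeeping (divisors of a fixed $k_0\in D$) needed to see that the infimum is actually attained in $D$.
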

\begin{remark}
If $H$ has maximal PCS $\Gamma_{n,m}$ and $\mathrm{gcd}(k,m)=1$
then $\varphi_k(H) = \SL(n,\Z_k)$. Hence we know $\nu$ such that
$\varphi_p(H) = \SL(n,p)$ for all primes $p>\nu$; cf.~the query
raised at the foot of \cite[p.~126]{Lubotzky97}.
\end{remark}
\begin{remark}
Although $H$ similarly contains a unique maximal elementary
subgroup $E_{n,m}$, the $\Gamma_n$-normal closure of $E_{n,m}$
need not be the maximal PCS in $H$, nor even be in $H$.
\end{remark}
\begin{remark}
Lemma~\ref{CfLS} provides an upper bound on $m$ such that
$\Gamma_{n,m}$ is the maximal PCS of an arithmetic group in
$\Gamma_n$; cf.~\cite[Proposition~6.1.1, p.~115]{LubotzkySegal}.
\end{remark}

\begin{lemma}\label{MaxPCSEasy}
Each subgroup of $\, \overline{G}_n= \GL(n,\Z_m)$ contains a
(perhaps trivial) unique maximal PCS of $\,
\overline{\Gamma}_n=\SL(n,\Z_m)$. In more detail, suppose that $\,
\Gamma_{n,m}\leq H\leq \Gamma_n$ and $\Gamma_{n,r}$ is the maximal
PCS in $H$; then $\overline{\Gamma}_{n,r} =\abk
\varphi_m(\Gamma_{n,r})$ is the maximal PCS in
$\overline{H}=\varphi_m(H)$.
\end{lemma}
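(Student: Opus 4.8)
The plan is to reduce everything to the correspondence theorem for the congruence homomorphism $\varphi_m\colon\Gamma_n\to\overline{\Gamma}_n$, which is surjective by Proposition~\ref{GenRingElvsGamma} and has kernel $\Gamma_{n,m}$. By Corollary~\ref{PCSForGammaOverZm} the PCSs of $\overline{\Gamma}_n$ are precisely the subgroups $\overline{\Gamma}_{n,a}=\varphi_m(\Gamma_{n,a})$ with $a\mid m$; and since $\Gamma_{n,m}\le\Gamma_{n,a}$ whenever $a\mid m$, the full $\varphi_m$-preimage of $\overline{\Gamma}_{n,a}$ is exactly $\Gamma_{n,a}$. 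In particular, for $a_1,a_2\mid m$ we have $\overline{\Gamma}_{n,a_1}\le\overline{\Gamma}_{n,a_2}\iff\Gamma_{n,a_1}\le\Gamma_{n,a_2}\iff a_2\mid a_1$.

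First I would establish the ``more detailed'' assertion. Since $\Gamma_{n,m}\le H$, maximality of $\Gamma_{n,r}$ in $H$ forces $\Gamma_{n,m}\le\Gamma_{n,r}$, i.e.\ $r\mid m$; hence $\overline{\Gamma}_{n,r}=\varphi_m(\Gamma_{n,r})$ is a genuine PCS of $\overline{\Gamma}_n$, and it lies in $\overline{H}=\varphi_m(H)$. For maximality, let $\overline{\Gamma}_{n,s}$ (with $s\mid m$) be any PCS of $\overline{\Gamma}_n$ contained in $\overline{H}$. Taking full preimages under $\varphi_m$, and using that both $\overline{H}$ and $\overline{\Gamma}_{n,s}$ contain $\varphi_m(\Gamma_{n,m})=1$ so that $\varphi_m^{-1}(\overline{H})=H$ and $\varphi_m^{-1}(\overline{\Gamma}_{n,s})=\Gamma_{n,s}$, we get $\Gamma_{n,s}\le H$. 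Maximality of $\Gamma_{n,r}$ then yields $\Gamma_{n,s}\le\Gamma_{n,r}$, whence $r\mid s$ and $\overline{\Gamma}_{n,s}\le\overline{\Gamma}_{n,r}$. Thus $\overline{\Gamma}_{n,r}$ is the unique maximal PCS of $\overline{\Gamma}_n$ inside $\overline{H}$.

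For the first assertion, given any $K\le\overline{G}_n=\glnm$, put $H=\varphi_m^{-1}(K\cap\overline{\Gamma}_n)\le\Gamma_n$. Then $\Gamma_{n,m}\le H$ and $|\Gamma_n:H|=|\overline{\Gamma}_n:K\cap\overline{\Gamma}_n|<\infty$; since $\Gamma_n\fis G_n$ (index $2$ via the determinant), $H\fis G_n$, so by Corollary~\ref{MaxPrincipalCS} $H$ has a unique maximal PCS $\Gamma_{n,r}$. Every PCS of $\overline{\Gamma}_n$ is a subgroup of $\overline{\Gamma}_n$, hence lies in $K$ if and only if it lies in $K\cap\overline{\Gamma}_n=\varphi_m(H)=\overline{H}$; by the previous paragraph the collection of such PCSs has $\overline{\Gamma}_{n,r}$ as its unique maximal member, which is trivial exactly when $r=m$ (i.e.\ $\overline{\Gamma}_{n,m}=1$).

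I do not anticipate a real obstacle: the argument is pure transfer along $\varphi_m$. The only points that genuinely need care are the surjectivity of $\varphi_m$ on $\Gamma_n$ (so that $\varphi_m(\Gamma_{n,a})$ is literally the PCS of the stated level and preimages of PCSs are again PCSs), and the repeated use of the fact that $\Gamma_{n,m}=\ker\varphi_m$ is contained in all the subgroups whose images and preimages are taken — this containment is what makes passing back and forth under $\varphi_m$ lossless and lets maximality transfer between $H$ and $\overline{H}$.
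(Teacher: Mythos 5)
Your argument is essentially the paper's: you establish the "more detailed" claim exactly as the paper does (observe $r\mid m$ so $\overline{\Gamma}_{n,r}$ is a PCS in $\overline{H}$, note via Corollary~\ref{PCSForGammaOverZm} that every PCS in $\overline{H}$ is $\overline{\Gamma}_{n,k}=\varphi_m(\Gamma_{n,k})$ with $\Gamma_{n,k}\leq H$ because $H\supseteq\ker\varphi_m$, then appeal to maximality of $\Gamma_{n,r}$ and push back under $\varphi_m$). The only addition is your explicit transfer argument for the general first sentence via $H=\varphi_m^{-1}(K\cap\overline{\Gamma}_n)$ and Corollary~\ref{MaxPrincipalCS}, which the paper leaves implicit; this is correct and a clean way to spell it out.
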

\begin{proof}
Since $\Gamma_{n,m}\leq \Gamma_{n,r}$, we have that $r$ divides
$m$, and so $\overline{\Gamma}_{n,r}$ is a PCS in $\overline{H}$.
Corollary~\ref{PCSForGammaOverZm} tells us that each PCS in
$\overline{H}$ has the form $\overline{\Gamma}_{n,k} =
\varphi_m(\Gamma_{n,k})$ for some $k \hspace*{.2mm} \big|
\hspace*{.2mm} m$. Moreover $\Gamma_{n,k}\leq H$, because $H$
contains $\ker \hspace*{.1mm} \varphi_m$. Hence
$\overline{\Gamma}_{n,r}$ is as claimed.
\end{proof}

\subsection{Subnormal structure}

Let $Z_{n,I}$ denote the full preimage of the center (scalar
subgroup) of $\GL(n,R/I)$ in $G_n=\GL(n,R)$ under $\varphi_I$. As
per \cite[p.~166]{JW}, the \emph{level} $\ell(h)$ of
$h=\allowbreak (h_{ij}) \in \allowbreak G_n$ is the ideal of $R$
generated by
\[
\{ h_{ij} \st i\neq j, \, 1\leq i , j \leq n\} \cup \{
h_{ii}-h_{jj} \st  1\leq i , j \leq n \}.
\]
Then $\ell(A) :=\allowbreak \sum_{a\in A} \ell(a)$ for $A\subseteq
\allowbreak G_n$. So $\ell(A)$ is the smallest ideal $I$ such that
$A\subseteq \abk Z_{n,I}$. When $R$ is a principal ideal ring we
 write $b$ in place of $I=bR$. For $R=\Z$ or $\Z_m$, $\ell(A)$
may be defined unambiguously as the non-negative integer or
integer modulo $m$ that generates $\ell(A)$; e.g., $\ell(Z_{n,k})
= \abk \ell(\Gamma_{n,k}) = \abk k$.
\begin{lemma}\label{LevelViaGenerators}
If $H=\gpess\leq G_n$ then $\ell(H) = \ell(S)$.
\end{lemma}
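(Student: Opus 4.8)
The plan is to reduce the statement to the characterization of $\ell$ recorded immediately above it: for $A\subseteq G_n$, the ideal $\ell(A)=\sum_{a\in A}\ell(a)$ is the \emph{smallest} ideal $I$ of $R$ with $A\subseteq Z_{n,I}$, where $Z_{n,I}$ is the full $\varphi_I$-preimage of the scalar subgroup of $\GL(n,R/I)$. Granting this, the lemma is purely formal, the one substantive point being that $Z_{n,I}$ is a \emph{subgroup} of $G_n$.

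First I would dispose of the inclusion $\ell(S)\subseteq\ell(H)$. This is immediate from the defining formula $\ell(A)=\sum_{a\in A}\ell(a)$: since $S\subseteq H$, the sum of ideals defining $\ell(S)$ is taken over a subset of the one defining $\ell(H)$, and a sum of ideals is monotone in its index set.

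For the reverse inclusion I would set $I:=\ell(S)$. Applying the minimality characterization to $A=S$ gives $S\subseteq Z_{n,I}$. Now $Z_{n,I}$ is the preimage under the group homomorphism $\varphi_I\colon G_n\to\GL(n,R/I)$ of the scalar subgroup of $\GL(n,R/I)$, hence is itself a subgroup of $G_n$. Therefore the subgroup generated by $S$, namely $H=\langle S\rangle$, also lies in $Z_{n,I}$. Applying the minimality characterization a second time, now to $A=H$, yields $\ell(H)\subseteq I=\ell(S)$. Combining the two inclusions gives $\ell(H)=\ell(S)$.

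I do not expect a real obstacle here; once the description of $\ell(A)$ as a minimal level ideal is in place, the argument is a one-line "containment of a generating set in a subgroup forces containment of the generated subgroup." If one preferred a self-contained route avoiding that characterization, the equivalent work is to check directly that $\ell(gh)\subseteq\ell(g)+\ell(h)$ and $\ell(g^{-1})=\ell(g)$ for $g,h\in G_n$, so that $\{g\in G_n:\ell(g)\subseteq I\}$ is closed under products and inverses; this amounts to the elementary fact that reduction modulo $I$ sends a matrix of level dividing $I$ to a genuine scalar, and scalars commute with everything and are closed under inversion. Either way the content is the same, and minimal.
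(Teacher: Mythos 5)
Your proof is correct, and your primary route is a mild repackaging of the paper's argument that is arguably cleaner. The paper proves the reverse inclusion $\ell(H)\subseteq\ell(S)$ by directly checking the two closure properties of levels: $\ell(ab)\subseteq\ell(a)+\ell(b)$ (said to be evident from the definitions) and $\ell(a)=\ell(a^{-1})$ (citing Wilson's Lemma~1), and then propagating these over products of generators and their inverses. You instead set $I=\ell(S)$, observe that $S\subseteq Z_{n,I}$ by the minimality characterization stated just before the lemma, and then note that $Z_{n,I}$ is a subgroup of $G_n$ simply because it is the $\varphi_I$-preimage of the scalar subgroup of $\GL(n,R/I)$ under a group homomorphism; hence $H=\langle S\rangle\leq Z_{n,I}$, and minimality gives $\ell(H)\subseteq I$. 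The two approaches are logically equivalent (saying $Z_{n,I}$ is a subgroup \emph{is} the assertion that the set of elements of level contained in $I$ is closed under products and inverses), but your phrasing sidesteps the need to invoke the external reference for $\ell(a)=\ell(a^{-1})$, since closure under inverses is automatic for a homomorphism preimage of a subgroup. Your closing remark correctly identifies the alternative direct-verification route as equivalent in content; that alternative is exactly what the paper's own proof does.
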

\begin{proof}
It is evident from the definitions that $\ell(S) \subseteq
\ell(H)$ and $\ell(ab)\subseteq \ell(a) + \ell (b)$ for $a$,
$b\in\abk G_n$. Since $\ell(a) = \ell(a^{-1})$ by
\cite[Lemma~1]{JW}, $\ell(H) \subseteq \ell(S)$ as required.
\end{proof}

From now on in this subsection, $n\geq 3$ and $R =\Z$ or $\Z_m$.
We write $H \, \mathrm{sn}\, G$ to denote that $H\leq \abk G$ is
subnormal. The \emph{defect} of $H$ is the least $d$ such that
there exists a series 
$H= H_0 \unlhd \allowbreak H_1 \unlhd \abk
\cdots \abk \unlhd H_{d-1} \unlhd\abk H_{d} = G$.
\begin{theorem}\label{FurtherSubnormalFacts}
 $H \, \mathrm{sn}\, G_n$ if and only if \
\begin{equation}\label{SubnormalChain}
\Gamma_{n,k^e} \leq H \leq Z_{n,k}
\end{equation}
for some $k$, $e$. If \eqref{SubnormalChain} holds then $d\leq
e+1$ where $d$ is the defect of $H$, and the least possible $e$ is
bounded above by a function of $n$ and $d$ only.
\end{theorem}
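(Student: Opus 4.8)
The plan is to prove the two directions of the equivalence separately, then extract the quantitative statements. For the ``if'' direction, assume $\Gamma_{n,k^e}\le H\le Z_{n,k}$. The key observation is that $Z_{n,k}/\Gamma_{n,k}$ is central in $G_n/\Gamma_{n,k}$ by definition of $Z_{n,k}$ (it is the preimage of the scalars), so $Z_{n,k}\unlhd G_n$; hence it suffices to show $H\,\mathrm{sn}\,Z_{n,k}$. For this I would build the series by pulling back a normal series in the finite group $\overline{G}_n=\GL(n,\Z_{k^e})$, or more directly: set $H_0=H$ and $H_{i}=H_{i-1}\Gamma_{n,k^{e-i}}$ (interpreting $\Gamma_{n,k^0}=\Gamma_n$), using Lemma~\ref{GCD}(i) to identify products of PCSs. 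One checks $H_{i-1}\unlhd H_i$ because modulo $\Gamma_{n,k^{e-i}}$ the quotient $H_{i}/\Gamma_{n,k^{e-i}}$ lies inside the image of $Z_{n,k}$, which is abelian-by-(bounded order), so after at most a bounded number of refinement steps per layer the chain becomes a genuine subnormal series of length roughly $e$ from $H$ up to $\Gamma_n$, and then one more step $\Gamma_n\unlhd G_n$ (or absorb this since we only need $H\,\mathrm{sn}\,G_n$). This simultaneously gives the bound $d\le e+1$.

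For the ``only if'' direction, suppose $H\,\mathrm{sn}\,G_n$ with defect $d$, witnessed by $H=H_0\unlhd H_1\unlhd\cdots\unlhd H_d=G_n$. Since $|\Gamma_n:E_{n,m}|$ is finite for every $m$ (Proposition~\ref{Excellent}) and $G_n$ is finitely generated (Proposition~\ref{GenRingElvsGamma2}), each $H_i$ is finitely generated, so $\ell(H_i)$ is a well-defined integer $k_i$ by Lemma~\ref{LevelViaGenerators}; set $k=k_0=\ell(H)$, so $H\le Z_{n,k}$ is immediate. For the lower containment I would argue inductively from the top down: $H_{d-1}\unlhd G_n$, and a normal subgroup of $G_n=\glnz$ either is central-modulo-some-$\Gamma_{n,j}$ or contains some $\Gamma_{n,j}$ — this is exactly the content of the classical normal subgroup structure theorem for $\SL(n,\Z)$ ($n\ge 3$), which says any noncentral normal subgroup contains a PCS; combined with the commutator identities of Lemma~\ref{tijRelations} one shows $[\Gamma_{n,j},H_{i}]$ forces a PCS of controlled level into $H_{i-1}$. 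Iterating $d$ times, each step multiplying the relevant level exponent by a bounded factor, produces $\Gamma_{n,k^e}\le H$ with $e$ bounded in terms of $n$ and $d$.

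The quantitative bound ``$e$ bounded above by a function of $n$ and $d$ only'' then falls out of the induction: at each of the $d$ steps the passage from $H_i$ to $H_{i-1}$ via commutators with transvections $t_{ij}(k^{\bullet})$ costs a bounded multiplicative increase in the exponent (bounded because the relevant commutator formulas in Lemma~\ref{tijRelations}(i) involve only fixed powers like $a^2b$, $ab^2$), and there are only $d$ steps; one also uses that $|Z_{n,k}:\Gamma_{n,k}|$ divides $|\GL(n,\Z_k)|$ and that the normal-subgroup-structure input is uniform in $k$. I expect the main obstacle to be the ``only if'' direction — specifically, making the descent from a subnormal series rigorous and uniform: one must control how much the level ideal can blow up when intersecting with, or taking commutators into, successive terms $H_i$, and invoke the normal subgroup theorem for $\SL(n,\Z)$ in a form that tracks levels quantitatively rather than merely asserting existence of some PCS. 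Verifying that the exponent growth per layer is bounded independently of $k$ (and of the particular subnormal chain) is the delicate point.
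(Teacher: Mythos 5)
The paper does not prove this theorem at all: its ``proof'' is a bare citation of Wilson's result \cite[Corollary~3]{JW}. So you are not reproducing the authors' argument; you are attempting to reprove Wilson's theorem from scratch, and you should judge your proposal against that standard.

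Your ``if'' direction is recoverable, but your justification is not quite the right one. You say that $H_{i-1}\unlhd H_i$ ``because modulo $\Gamma_{n,k^{e-i}}$ the quotient \dots lies inside the image of $Z_{n,k}$, which is abelian-by-(bounded order), so after at most a bounded number of refinement steps per layer the chain becomes \dots subnormal.'' Being a subgroup of an abelian-by-finite group does not give normality, and if you genuinely had to insert refinement steps between $H_{i-1}$ and $H_i$ the resulting series would have length well beyond $e+1$, killing the bound you are trying to establish. The correct mechanism is the commutator containment $[Z_{n,k},\,\Gamma_{n,k^j}]\le \Gamma_{n,k^{j+1}}$: for $h\equiv\lambda 1_n\pmod k$ and $g\equiv 1_n\pmod{k^j}$ one computes $gh\equiv hg\pmod{k^{j+1}}$, so $[h,g]\in\Gamma_{n,k^{j+1}}$. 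This makes each link $H\Gamma_{n,k^{e-i}}\unlhd H\Gamma_{n,k^{e-i-1}}$ an actual normal inclusion with no refinement needed, and the chain $H=H_0\unlhd\cdots\unlhd H_e=H\Gamma_n\unlhd G_n$ has exactly $e+1$ links. Equivalently and more cleanly, Lemma~\ref{NilpotentQuotient} (which the paper records precisely for this purpose) says $Z_{n,k}/\Gamma_{n,k^e}$ is nilpotent of class $\le e$; every subgroup of a class-$e$ nilpotent group is subnormal of defect $\le e$, and one more step $Z_{n,k}\unlhd G_n$ gives $d\le e+1$.

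The ``only if'' direction is where the entire weight of the theorem sits, and you have not proved it; to your credit you say so explicitly. The step you wave at---``a normal subgroup of $G_n$ either is central-modulo-some-$\Gamma_{n,j}$ or contains some $\Gamma_{n,j}$, \dots combined with the commutator identities one shows $[\Gamma_{n,j},H_i]$ forces a PCS of controlled level into $H_{i-1}$''---is exactly Wilson's Lemma (the ``sandwich'' lemma, which appears later in the paper in weakened form as Lemma~\ref{Sandwich} but only for $H$ normal in $G_n$, not for a term midway down a subnormal chain). Proving that if $K\unlhd L$ and $\Gamma_{n,a}\le L\le Z_{n,b}$ then $K$ is sandwiched between $\Gamma_{n,a'}$ and $Z_{n,b'}$ with $a',b'$ controlled by $a,b,n$ is a genuine theorem requiring a careful commutator analysis with transvections; it does not follow from Lemma~\ref{tijRelations} by a quick induction, and the final assertion that $e$ is bounded by a function of $n$ and $d$ alone depends on the specific polynomial growth rate one extracts from that analysis. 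So your proposal correctly identifies the shape of the argument and the location of the difficulty, but leaves the hard half unproved, whereas the paper delegates the whole statement to \cite{JW}.
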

\begin{proof}
See \cite[Corollary 3]{JW}.
\end{proof}

Although non-scalar subnormal subgroups of $\glnz$ have finite
index, this is not true for $n=\abk 2$; the normal closure of
$E_{2,m}$ in $\SL(2,\Z)$ has infinite index for $m>5$~\cite[p.~31]{Mennicke}.
\begin{theorem}\label{RelationToMaximalPCS}
Let $H$ be a subgroup of $G_n$ of level $l\geq 1$, with maximal
PCS $\Gamma_{n,r}$. Then $H \, \mathrm{sn}\, \abk G_n$ if and only
if $r \hspace*{.2mm} \big| \hspace*{.2mm} l^e$ for some $e$. In
that event, the defect of $H$ is bounded above by $e'+1$ where
$e'$ is the least such $e$.
\end{theorem}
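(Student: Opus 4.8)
The plan is to read both directions off Theorem~\ref{FurtherSubnormalFacts} by unwinding what the two invariants in the statement say. By definition of level, $H\le Z_{n,l}$, and $l$ is minimal with this property; more precisely, the stated characterization of $\ell(H)$ as the smallest ideal $I$ with $H\subseteq Z_{n,I}$ gives that $H\le Z_{n,k}$ forces $l\mid$-divisibility, i.e.\ $k\mid l$. By definition of \emph{maximal} PCS, $\Gamma_{n,r}\le H$, and whenever $\Gamma_{n,k}\le H$ we have $\Gamma_{n,k}\le\Gamma_{n,r}$, i.e.\ $r\mid k$ (using the relation $\Gamma_{n,a}\le\Gamma_{n,b}\iff b\mid a$ from the Remark after Proposition~\ref{GammaNormalClosureElementaryGroup}). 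With these two observations, all that remains is to reconcile the two-sided bound~\eqref{SubnormalChain} with the single divisibility $r\mid l^{e}$.

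For the ``if'' direction, suppose $r\mid l^{e}$. Then $\Gamma_{n,l^{e}}\le\Gamma_{n,r}\le H$, while $H\le Z_{n,l}$ since $l=\ell(H)$; hence $\Gamma_{n,l^{e}}\le H\le Z_{n,l}$, which is exactly~\eqref{SubnormalChain} with $k=l$. Theorem~\ref{FurtherSubnormalFacts} then gives $H\,\mathrm{sn}\,G_n$ with defect at most $e+1$. Applying this with $e=e'$, the least exponent for which $r\mid l^{e'}$, yields defect $\le e'+1$.

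For the ``only if'' direction, suppose $H\,\mathrm{sn}\,G_n$. Theorem~\ref{FurtherSubnormalFacts} provides $k$ and $e$ with $\Gamma_{n,k^{e}}\le H\le Z_{n,k}$. From $H\le Z_{n,k}$ and the minimality of $l=\ell(H)$ we get $k\mid l$; from $\Gamma_{n,k^{e}}\le H$ and the maximality of $\Gamma_{n,r}$ we get $r\mid k^{e}$. Therefore $r\mid k^{e}\mid l^{e}$, so $r\mid l^{e'}$ for the least such exponent $e'$, and feeding this $e'$ back through the ``if'' direction gives the asserted defect bound.

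I do not anticipate a genuine obstacle: once the characterizations of the level and of the maximal PCS are in place, every step is a forced computation in the divisibility lattice. The only points requiring a little care are the translation between the ideal-theoretic statements $H\le Z_{n,I}$ and ``$\ell(H)$ is the least such $I$'' on the one hand and ordinary divisibility in $\Z$ (or in $\Z_m$) on the other, and noticing that the exponent controlling the defect may be taken to be the intrinsic $e'$, since both implications pass through Theorem~\ref{FurtherSubnormalFacts} with the choice $k=l$.
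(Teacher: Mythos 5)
Your proof is correct and takes essentially the same route as the paper: both directions are read off Theorem~\ref{FurtherSubnormalFacts} by translating $H\le Z_{n,k}$ into $k\mid l$ (minimality of the level) and $\Gamma_{n,k^e}\le H$ into $r\mid k^e$ (maximality of the PCS), and then chaining these divisibilities. You merely spell out the divisibility bookkeeping in a bit more detail than the paper does.
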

\begin{proof}
If $H$ is subnormal then $lR \subseteq kR$ and $\Gamma_{n,k^e}\leq
\Gamma_{n,r}$ for $k$, $e$ as in
Theorem~\ref{FurtherSubnormalFacts}; so $k \hspace*{.2mm} \big|
\hspace*{.2mm} l$ and $r\hspace*{.2mm} \big| \hspace*{.2mm} k^e$.
Conversely, if $r\hspace*{.2mm} \big| \hspace*{.2mm} l^e$ then $H$
satisfies \eqref{SubnormalChain} with $k=l$.
\end{proof}

\begin{lemma}[\cite{JW}, p.~165]
\label{NilpotentQuotient} $Z_{n,l}/\Gamma_{n,l^e}$ is nilpotent of
class at most $e$.
\end{lemma}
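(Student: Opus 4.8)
The plan is to estimate the lower central series of $Q:=Z_{n,l}/\Gamma_{n,l^e}$ by tracking congruence levels, the key mechanism being that commuting with an element of $Z_{n,l}$ multiplies the level by an extra factor of $l$. First one checks that $Q$ is a well-defined group: $\Gamma_{n,l^e}$ is the kernel of $\varphi_{l^e}$ on $\Gamma_n=\SL(n,R)$, hence normal in $G_n=\GL(n,R)$, hence in $Z_{n,l}\le G_n$. Since homomorphic images of commutators are commutators, the lower central series satisfies $\gamma_k(Q)=\gamma_k(Z_{n,l})\,\Gamma_{n,l^e}/\Gamma_{n,l^e}$, so it suffices to prove $\gamma_{e+1}(Z_{n,l})\le\Gamma_{n,l^e}$.

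The engine is two commutator estimates: (a) $[Z_{n,l},Z_{n,l}]\le\Gamma_{n,l}$, and (b) $[Z_{n,l},\Gamma_{n,l^i}]\le\Gamma_{n,l^{i+1}}$ for all $i\ge 1$. Estimate (a) is immediate: for $g,h\in Z_{n,l}$ the images $\varphi_l(g),\varphi_l(h)$ are scalar, hence central in $\GL(n,R/lR)$, so $\varphi_l([g,h])=1_n$, while $[g,h]\in\SL(n,R)$; thus $[g,h]\in\Gamma_{n,l}$. For (b) I would compute directly. Write $g=\alpha 1_n+lB$ with $\alpha\in R$ any lift of the scalar $\varphi_l(g)$ and $B\in\mathrm{Mat}(n,R)$, and $h=1_n+l^iC\in\Gamma_{n,l^i}$. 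Since $\alpha 1_n$ commutes with everything, the products $gh$ and $hg$ agree except in their top-degree terms $l^{i+1}BC$ and $l^{i+1}CB$, so $gh-hg=l^{i+1}(BC-CB)$ and therefore $[g,h]=g^{-1}h^{-1}gh=1_n+l^{i+1}g^{-1}h^{-1}(BC-CB)$, which lies in $\Gamma_{n,l^{i+1}}$ since it has determinant $1$ and $g^{-1}h^{-1}(BC-CB)\in\mathrm{Mat}(n,R)$.

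Now induct on $k$: $\gamma_2(Z_{n,l})=[Z_{n,l},Z_{n,l}]\le\Gamma_{n,l}$ by (a), and if $\gamma_k(Z_{n,l})\le\Gamma_{n,l^{k-1}}$ for some $k\ge 2$, then $\gamma_{k+1}(Z_{n,l})=[\gamma_k(Z_{n,l}),Z_{n,l}]\le[\Gamma_{n,l^{k-1}},Z_{n,l}]\le\Gamma_{n,l^k}$ by (b). Hence $\gamma_{e+1}(Z_{n,l})\le\Gamma_{n,l^e}$, so $\gamma_{e+1}(Q)=1$ and $Q$ is nilpotent of class at most $e$. The only non-formal step is the matrix identity $gh-hg=l^{i+1}(BC-CB)$, and I expect no genuine obstacle there; the single point to keep in mind is that the lift $\alpha$ of the scalar part of $g$ need be neither a unit nor canonical, since only the congruence $g\equiv\alpha 1_n$ modulo $l$ is used.
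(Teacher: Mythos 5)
The paper does not prove this lemma; the label in the statement is the proof, a citation to Wilson's \emph{The normal and subnormal structure of general linear groups}. So there is no internal proof to compare yours against. What you have written is a correct, self-contained reconstruction. Estimate (a) follows because $\varphi_l$ sends $Z_{n,l}$ into the scalar (hence central) subgroup of $\GL(n,R/lR)$, and $[g,h]$ automatically has determinant $1$, placing it in $\Gamma_{n,l}$. Estimate (b) is the substantive step: writing $g=\alpha 1_n+lB$ (legitimate because $lR$ is principal for $R=\Z$ or $\Z_m$, the setting the paper fixes in that subsection) and $h=1_n+l^iC$ gives $gh-hg=l^{i+1}(BC-CB)$, whence $[g,h]=1_n+l^{i+1}g^{-1}h^{-1}(BC-CB)\in\Gamma_{n,l^{i+1}}$ since $g^{-1},h^{-1}\in\mathrm{Mat}(n,R)$ and $\det[g,h]=1$. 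The induction on the lower central series is then routine, and your remark that $\alpha$ need not be a unit is exactly the right thing to flag.

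It is worth noting that your commutator computation is the same mechanism the paper does spell out in the proof of Lemma~\ref{NiftyGHK} (which even says ``Cf.\ Lemma~\ref{NilpotentQuotient}''): there the authors expand $[a,b]$ for $a=1_n+p^ix$, $b=1_n+p^jy$ directly via the explicit inverses, whereas you first form $gh-hg$ and then left-multiply by $g^{-1}h^{-1}$. The two are interchangeable; yours has the small additional wrinkle of peeling off the scalar part $\alpha 1_n$ so that only one of the two factors carries a positive power of $l$, which is what buys the jump from level $l^i$ to $l^{i+1}$ rather than from $l^i$ to $l^{2i}$.
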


We now consider normality.
\begin{lemma}\label{IfNormalThenSameLevels}
If $\,\Gamma_{n,l} \leq H \leq Z_{n,l}$ then $H\unlhd G_n$ and $l
= \ell(H) = $ the level of the maximal PCS in $H$.
\end{lemma}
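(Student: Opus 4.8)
The plan is to deduce all three conclusions from one structural observation — that $[Z_{n,l},G_n]\leq\Gamma_{n,l}$, equivalently that $Z_{n,l}/\Gamma_{n,l}$ is central in $G_n/\Gamma_{n,l}$ — and then to finish with routine divisibility bookkeeping based on the level function.

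First I would prove $[Z_{n,l},G_n]\leq\Gamma_{n,l}$. Take $h\in Z_{n,l}$ and $x\in G_n$. By the definition of $Z_{n,l}$ the matrix $\varphi_l(h)$ is central in $\GL(n,R/lR)$, so $\varphi_l([h,x])=[\varphi_l(h),\varphi_l(x)]=1_n$; thus $[h,x]$ lies in the kernel of $\varphi_l$ on $G_n$. Being a commutator, $[h,x]$ also has determinant $1$, hence lies in $\Gamma_n$. Therefore $[h,x]\in\Gamma_n\cap\ker\varphi_l=\Gamma_{n,l}$. Since $\Gamma_{n,l}\leq H\leq Z_{n,l}$, this gives $[H,G_n]\leq[Z_{n,l},G_n]\leq\Gamma_{n,l}\leq H$, so $H\unlhd G_n$.

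Next I would pin down the levels, using only that $\ell$ is monotone (immediate from $\ell(A)=\sum_{a\in A}\ell(a)$) and that $\ell(A)$ is the smallest ideal $I$ with $A\subseteq Z_{n,I}$. From $\Gamma_{n,l}\leq H$ we get $\ell(\Gamma_{n,l})\subseteq\ell(H)$, i.e.\ $lR\subseteq\ell(H)$, so $\ell(H)\mid l$; from $H\subseteq Z_{n,l}$ and minimality of $\ell(H)$ we get $\ell(H)\subseteq lR$, so $l\mid\ell(H)$. Hence $\ell(H)=l$. Now let $\Gamma_{n,r}$ be the maximal PCS in $H$ (which exists by Corollary~\ref{MaxPrincipalCS} when $R=\Z$ and by Lemma~\ref{MaxPCSEasy} when $R=\Z_m$). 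Maximality applied to $\Gamma_{n,l}\leq H$ gives $\Gamma_{n,l}\leq\Gamma_{n,r}$, whence (taking levels) $lR\subseteq rR$, i.e.\ $r\mid l$; and $\Gamma_{n,r}\leq H\leq Z_{n,l}$ gives $rR=\ell(\Gamma_{n,r})\subseteq lR$, i.e.\ $l\mid r$. Therefore $r=l$, so the maximal PCS in $H$ again has level $l$.

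I do not anticipate a genuine obstacle: once $[Z_{n,l},G_n]\leq\Gamma_{n,l}$ is in hand, the remainder is formal manipulation of ideals together with the existence of a maximal PCS. The one step that is not pure bookkeeping is the determinant observation in the first paragraph — it is exactly what guarantees that $[h,x]$ lands in the $\SL$-level congruence subgroup $\Gamma_{n,l}$ rather than merely in the (possibly strictly larger) kernel of $\varphi_l$ on $G_n$.
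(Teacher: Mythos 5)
Your proof is correct, and it is actually more complete than the paper's. The paper's own proof of this lemma addresses only the two level equalities, via essentially the same bookkeeping you use in your second paragraph: monotonicity of $\ell$ gives $l = \ell(\Gamma_{n,l}) \ge \ell(H) \ge \ell(Z_{n,l}) = l$, and the maximal PCS $\Gamma_{n,r}$ satisfies $r \mid l$ (since $\Gamma_{n,l} \le \Gamma_{n,r}$) and $l \mid r$ (since $\Gamma_{n,r} \le Z_{n,l}$). The paper does \emph{not} include an argument for the normality assertion $H \unlhd G_n$ at all, even though this is the part of the lemma that is subsequently invoked in the proof of Lemma~\ref{Sandwich}(ii). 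Your first paragraph supplies exactly the missing argument: the commutator congruence $[Z_{n,l}, G_n] \le \Gamma_{n,l}$, which follows from centrality of $\varphi_l(Z_{n,l})$ together with the determinant-one observation. That last point is not merely cosmetic, as you correctly flag: for example when $R = \Z$, $l = 2$, and $n$ is odd, the kernel of $\varphi_l$ on $G_n = \GL(n,\Z)$ contains $-1_n$ and is strictly larger than $\Gamma_{n,l}$, so one genuinely needs $[h,x] \in \Gamma_n$ and not just $[h,x] \in \ker\varphi_l$. In short, your write-up fills a small gap that the paper leaves implicit (presumably relying on Wilson's results in~\cite{JW}), while the rest coincides with the paper's reasoning.
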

\begin{proof}
We first observe that $l = \ell(\Gamma_{n,l}) \geq \ell(H) \geq
\ell(Z_{n,l}) = l$. Let $\Gamma_{n,r}$ be the maximal PCS in $H$.
Then $r \hspace*{.2mm} \big| \hspace*{.2mm} l$; and $l
\hspace*{.2mm} \big| \hspace*{.2mm} r$ because $\Gamma_{n,r}\leq
Z_{n,l}$.
\end{proof}

\begin{lemma}\label{Sandwich}
Suppose that $H\leq G_n$ has level $l$. Then
\begin{itemize}
\item[{\rm (i)}] $\Gamma_{n,l}\leq H^{G_n} \leq Z_{n,l}$. \item[]
\vspace{-12.5pt} \item[{\rm (ii)}] $H^{G_n} = \langle H,
\Gamma_{n,l}\rangle$.
\end{itemize}
\end{lemma}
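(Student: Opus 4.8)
The plan is to settle part (i) and then read off part (ii). For the upper bound in (i), note that $l\Z$ is by definition the smallest ideal $I$ with $H\subseteq Z_{n,I}$, so $H\subseteq Z_{n,l}$; and $Z_{n,l}$, being the $\varphi_l$-preimage of the centre of $\GL(n,\Z_l)$, is normal in $G_n$. Hence $H^{G_n}\subseteq Z_{n,l}$.

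For the lower bound in (i) I would first observe that conjugation by elements of $G_n$ preserves level: since $Z_{n,I}\unlhd G_n$, an element $h$ and each of its $G_n$-conjugates lie in exactly the same subgroups $Z_{n,I}$. So Lemma~\ref{LevelViaGenerators}, applied to the generating set $\{h^{g}:h\in H,\ g\in G_n\}$ of $N:=H^{G_n}$, gives $\ell(N)=\ell(H)=l$; thus $N$ is normal in $G_n$ --- in particular normalised by $\Gamma_n=E_n$ --- and has level $l$, and it suffices to show that every such $N$ contains $\Gamma_{n,l}$. By Proposition~\ref{GammaNormalClosureElementaryGroup}, $\Gamma_{n,l}=\langle t_{ij}(l)\rangle^{\Gamma_n}$, and $N\unlhd\Gamma_n$, so it is enough to place one transvection $t_{ij}(l)$ in $N$. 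Now $\{a\in\Z:t_{ij}(a)\in N\}$ is a subgroup of $\Z$, independent of the choice of distinct $i,j$ (for a given $a$, all $t_{ij}(a)$ are $\Gamma_n$-conjugate to within replacing $a$ by $-a$, and $N\unlhd\Gamma_n$), while by Lemma~\ref{LevelViaGenerators} the integer $l$ is the gcd of the off-diagonal entries and diagonal differences of a generating set of $N$; so it suffices to show, for each generator $h$ and all $p\neq q$, that $t_{ij}(h_{pq})$ and $t_{ij}(h_{pp}-h_{qq})$ belong to $N$. This is exactly the classical commutator computation behind the normal subgroup structure of $\GL(n,\Z)$ for $n\geq3$: starting from $h\in N$ one produces the desired transvections by iterated commutators of $h$ with transvections, together with the identities of Lemma~\ref{tijRelations}, the hypothesis $n\geq3$ supplying the auxiliary index needed at each step. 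Rather than reproduce it, I would invoke it in packaged form --- every subgroup of $G_n$ normalised by $\Gamma_n$ (a fortiori every normal subgroup) contains the principal congruence subgroup of its own level --- a consequence of the solution of the congruence subgroup problem \cite{BLS,BassMilnorSerre,Mennicke}; see also \cite{HahnOMeara}. Applied to $N=H^{G_n}$ this gives $\Gamma_{n,l}\leq H^{G_n}$ and completes (i). I expect this extraction step to be the one real obstacle; everything else is bookkeeping.

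For part (ii), the inclusion $H^{G_n}\supseteq\langle H,\Gamma_{n,l}\rangle$ is immediate from (i). For the reverse inclusion, fix $h\in H$ and $g\in G_n$ and look at $[h,g]=h^{-1}h^{g}$: conjugation preserves determinants, so $[h,g]\in\Gamma_n$; and since $H\subseteq Z_{n,l}$ the matrix $\varphi_l(h)$ is scalar, hence central in $\GL(n,\Z_l)$, so $\varphi_l(h^{g})=\varphi_l(h)$ and $\varphi_l([h,g])=1_n$. Therefore $[h,g]\in\Gamma_{n,l}$, whence $h^{g}=h\cdot[h,g]\in\langle H,\Gamma_{n,l}\rangle$. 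As the $h^{g}$ generate $H^{G_n}$, this gives $H^{G_n}\subseteq\langle H,\Gamma_{n,l}\rangle$, and equality follows.
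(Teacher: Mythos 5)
Your proof is correct, and its overall shape matches the paper's, but the two parts are handled a little differently. For part (i), the paper's argument is more granular: it applies Brenner's Theorems~1 and~4 (cited as \cite{BrennerIII}) to each individual $h\in H$, getting $t_{12}(a_h)\in\langle h\rangle^{G_n}$ where $a_h=\ell(h)$, then multiplies these up to obtain $t_{12}(l)\in H^{G_n}$, and finishes via Proposition~\ref{GammaNormalClosureElementaryGroup}. You instead apply Lemma~\ref{LevelViaGenerators} once to the whole normal closure $N=H^{G_n}$ (after the correct observation that $G_n$-conjugation preserves level), reduce to placing a single transvection $t_{ij}(l)$ in $N$, and then invoke the normal-subgroup/CSP theorem in packaged form. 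That packaged statement is precisely what Brenner's theorems establish, so the content is the same; the paper's element-by-element route is slightly more concrete and makes the reduction to a generating set explicit. You candidly flag the transvection-extraction step as the real obstacle, which is fair --- it is the only nontrivial input and the paper also outsources it. For part (ii), the paper routes through Lemma~\ref{IfNormalThenSameLevels} (which asserts $\langle H,\Gamma_{n,l}\rangle\unlhd G_n$), whereas you give the underlying commutator computation directly: $[h,g]\in\Gamma_n$ and $\varphi_l([h,g])=1_n$ because $\varphi_l(h)$ is central, so $[h,g]\in\Gamma_{n,l}$ and $h^g\in\langle H,\Gamma_{n,l}\rangle$. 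Your version is self-contained and is essentially the hidden proof of the normality assertion in Lemma~\ref{IfNormalThenSameLevels}, so it is a reasonable --- arguably cleaner --- way to present (ii). One small point of care: when you assert that $\{a\in\Z:t_{ij}(a)\in N\}$ is independent of $(i,j)$, you do need $n\geq3$ and the fact that conjugation can at worst replace $a$ by $-a$; you note this, and the set being a subgroup of $\Z$ absorbs the sign, so it is fine.
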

\begin{proof}
(i)  \hspace*{.01mm} The inclusion $H^{G_n} \leq Z_{n,l}$ is
clear. If $h\in H$ has level $a$ then $t_{12}(a)\in \langle
h\rangle^{G_n}$ by Theorems~1 and 4 of \cite{BrennerIII}. As a
consequence, $t_{12}(l)\in H^{G_n}$. Now this part is assured by
Proposition~\ref{GammaNormalClosureElementaryGroup} and
Corollary~\ref{PCSForGammaOverZm}.

(ii) \hspace*{.2mm} Let $L= \langle H, \Gamma_{n,l}\rangle$. Since
$L\unlhd G_n$ (Lemma~\ref{IfNormalThenSameLevels}), $H^{G_n}\leq
\abk L$. Also $L\leq H^{G_n}$ by (i).
\end{proof}
\begin{corollary}\label{CorNormalityCriterion}
$H\unlhd G_n$ if and only if $\ell(H)$ is the level of the maximal
PCS in $H$.
\end{corollary}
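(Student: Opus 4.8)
The plan is to treat Corollary~\ref{CorNormalityCriterion} as a bookkeeping consequence of Lemmas~\ref{IfNormalThenSameLevels} and~\ref{Sandwich}, with no new computation required. Fix $H\leq G_n$ and set $l=\ell(H)$; recall that $\ell(H)=l$ says exactly that $H\subseteq Z_{n,l}$ while $H\not\subseteq Z_{n,k}$ for every proper divisor $k$ of $l$ (and, by Lemma~\ref{LevelViaGenerators}, $l$ is visible from any generating set of $H$). Both implications of the corollary will be obtained by feeding a ``sandwich'' $\Gamma_{n,l}\leq H\leq Z_{n,l}$ into Lemma~\ref{IfNormalThenSameLevels}; the two directions differ only in how that sandwich is produced.

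For the ``if'' direction I would argue as follows. Assume the maximal PCS in $H$ is $\Gamma_{n,l}$, i.e.\ its level equals $\ell(H)=l$ (this hypothesis already presupposes that $H$ contains a PCS, hence $H\fis G_n$ and the notion of maximal PCS is legitimate). Then $\Gamma_{n,l}\leq H$ by the meaning of ``maximal PCS in $H$'', and $H\leq Z_{n,l}$ since $\ell(H)=l$. So $\Gamma_{n,l}\leq H\leq Z_{n,l}$, and Lemma~\ref{IfNormalThenSameLevels} gives $H\unlhd G_n$ at once.

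For the ``only if'' direction, assume $H\unlhd G_n$, so $H=H^{G_n}$. Lemma~\ref{Sandwich}(i) applied to $H$ (which has level $l$) yields $\Gamma_{n,l}\leq H^{G_n}=H\leq Z_{n,l}$; in particular $H$ contains the PCS $\Gamma_{n,l}$, so by Corollary~\ref{MaxPrincipalCS} it has a well-defined maximal PCS. Now Lemma~\ref{IfNormalThenSameLevels}, applied to this same sandwich, tells us that $l$ is the level of the maximal PCS in $H$. Since $l=\ell(H)$ by construction, this is precisely the claimed equality, and the proof is complete.

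I do not expect a genuine obstacle here: the mathematical substance is entirely carried by Lemmas~\ref{IfNormalThenSameLevels} and~\ref{Sandwich}. The only points needing a moment's attention are (a) invoking Lemma~\ref{IfNormalThenSameLevels} with the parameter $l=\ell(H)$ rather than with some other divisor of it, and (b) confirming that ``the maximal PCS in $H$'' is meaningful in each direction --- in the ``if'' direction this is built into the hypothesis, and in the ``only if'' direction it follows from $\Gamma_{n,l}\leq H$; the degenerate case $\ell(H)=0$, where $H$ is scalar, can be checked directly and makes both sides of the equivalence hold.
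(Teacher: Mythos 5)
Your proof is correct and coincides with the argument the paper implicitly intends: both directions reduce to producing the sandwich $\Gamma_{n,l}\leq H\leq Z_{n,l}$ (via Lemma~\ref{Sandwich}(i) for the ``only if'' direction, directly from the hypothesis for the ``if'' direction) and then reading off the conclusion from Lemma~\ref{IfNormalThenSameLevels}. Nothing to add.
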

\begin{proposition}\label{NormalinSLiffNormalinGL}
Lemma{\em ~\ref{Sandwich}} remains true with $G_n$ replaced by
$\Gamma_n=\SL(n,R)$. That is, $H^{\Gamma_n} = \abk H^{G_n}$, and
so $H\leq \Gamma_n$ is normal in $\Gamma_n$ precisely when it is
normal in $G_n$.
\end{proposition}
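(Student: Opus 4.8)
Since $\Gamma_n\unlhd G_n$, the containments $H^{\Gamma_n}\subseteq H^{G_n}\subseteq\Gamma_n$ hold automatically, so the whole proposition reduces to showing $H^{G_n}\subseteq H^{\Gamma_n}$, i.e.\ that $N:=H^{\Gamma_n}$ is \emph{normal in $G_n$}: once this is known, $N$ is a normal subgroup of $G_n$ containing $H$, hence $H^{G_n}\subseteq N$, and the equality $H^{\Gamma_n}=H^{G_n}$ in turn gives $H\unlhd\Gamma_n\Leftrightarrow H\unlhd G_n$. To prove $N\unlhd G_n$ the plan is to apply Lemma~\ref{IfNormalThenSameLevels} with $l:=\ell(H)$, so I need $\Gamma_{n,l}\leq N\leq Z_{n,l}$. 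The upper inclusion is free: $N\subseteq H^{G_n}\subseteq Z_{n,l}$ by Lemma~\ref{Sandwich}(i), and since also $H\subseteq N$, monotonicity of $\ell$ together with $\ell(H)=\ell(Z_{n,l})=l$ gives $\ell(N)=l$. So everything comes down to the lower inclusion $\Gamma_{n,l}\leq N$.

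If $H$ (hence $N$) is scalar then $l=0$, $\Gamma_{n,0}=1\leq N$, and we are done ($N$ is then central in $G_n$). Otherwise $N$ is a non-scalar normal subgroup of $\Gamma_n$, hence subnormal of defect at most $2$ in $G_n$; because non-scalar subnormal subgroups of $\glnz$ have finite index (Theorem~\ref{FurtherSubnormalFacts}), $N\fis G_n$, and so $N$ has a maximal PCS $\Gamma_{n,r}$ by Corollary~\ref{MaxPrincipalCS}. Passing to $\SL(n,\Z_r)=\Gamma_n/\Gamma_{n,r}$, the image $\overline{N}=N/\Gamma_{n,r}$ is a normal subgroup of $\SL(n,\Z_r)$ containing no nontrivial PCS (such a PCS would pull back to a PCS of $\Gamma_n$ lying in $N$ and properly containing $\Gamma_{n,r}$, against maximality). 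For $n\geq 3$, a normal subgroup of $\SL(n,\Z_r)$ with no nontrivial PCS is central (classification of normal subgroups of $\SL(n,\Z_r)$; Section~\ref{AlgorithmsforgroupsoverZm}), so $\overline{N}\leq Z(\SL(n,\Z_r))$, i.e.\ $N\leq Z_{n,r}$. Then $N\leq Z_{n,r}$ forces $r\mid\ell(N)=l$, while $\Gamma_{n,r}\leq N\leq Z_{n,l}$ forces $l\mid r$; hence $r=l$ and $\Gamma_{n,l}=\Gamma_{n,r}\leq N$. Lemma~\ref{IfNormalThenSameLevels} now gives $N\unlhd G_n$, completing the argument; the final clause about normality follows from $H^{\Gamma_n}=H^{G_n}$.

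An alternative to the middle paragraph is to re-run the proof of Lemma~\ref{Sandwich}(i) verbatim, the point being that for $h\in H$ of level $a$ one actually has $t_{12}(a)\in\langle h\rangle^{\Gamma_n}$ and not merely $t_{12}(a)\in\langle h\rangle^{G_n}$; then, choosing finitely many $h_i\in H$ whose levels generate the ideal $\ell(H)$ and using that the $t_{12}(c)$ form a one-parameter subgroup, one obtains $t_{12}(l)\in N$ and hence $\Gamma_{n,l}=\langle t_{12}(l)\rangle^{\Gamma_n}\leq N$ by Proposition~\ref{GammaNormalClosureElementaryGroup}. Either way, the one substantive point — and the only place anything beyond the level bookkeeping already carried out for Lemma~\ref{Sandwich} is needed — is the sharp fact that a non-scalar normal subgroup of $\SL(n,\Z_r)$, $n\geq 3$, contains a nontrivial congruence subgroup (equivalently, the $\SL$ form of the theorem of \cite{BrennerIII} quoted in the proof of Lemma~\ref{Sandwich}). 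I expect this to be where the real work lies; the remainder is routine.
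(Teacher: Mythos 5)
The paper gives this proposition no written proof; it is left as an evident variant of Lemma~\ref{Sandwich}. Your closing "alternative" paragraph is exactly the intended route and is correct: Theorems 1 and 4 of \cite{BrennerIII} concern normal closures in $\SL(n,\Z)$, so for $h$ of level $a$ they already give $t_{12}(a)\in\langle h\rangle^{\Gamma_n}$ (the proof of Lemma~\ref{Sandwich}(i) cites only the weaker $G_n$-closure because that lemma is about $G_n$). Since $\ell(H)$ is generated by the levels of finitely many elements of $H$ and the $t_{12}(c)$ multiply additively, $t_{12}(l)\in H^{\Gamma_n}$, whence $\Gamma_{n,l}\leq H^{\Gamma_n}\leq Z_{n,l}$ by Proposition~\ref{GammaNormalClosureElementaryGroup} (or Corollary~\ref{PCSForGammaOverZm} when $R=\Z_m$), and Lemma~\ref{IfNormalThenSameLevels} gives $H^{\Gamma_n}\unlhd G_n$; the reverse inclusion $H^{G_n}\leq H^{\Gamma_n}$ and the statement about normality then drop out exactly as you say.

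Your primary argument, by contrast, hinges on the assertion that for $n\geq 3$ a normal subgroup of $\SL(n,\Z_r)$ containing no nontrivial PCS must be central, attributed to Section~\ref{AlgorithmsforgroupsoverZm}. That fact does not appear there, and it is essentially the statement you are trying to prove, transferred to $R=\Z_r$: "a normal subgroup of $\Gamma_n$ with maximal PCS $\Gamma_{n,r}$ lies in $Z_{n,r}$" is, after applying $\varphi_r$, the same claim. Wilson's Theorem~\ref{FurtherSubnormalFacts} only yields $\Gamma_{n,k^e}\leq\overline N\leq Z_{n,k}$ for some $k$, $e$, and $\Gamma_{n,k^e}$ can be trivial in $\SL(n,\Z_r)$ while $Z_{n,k}$ is far from scalar, so it does not by itself rule out a non-central $\overline N$ with trivial maximal PCS. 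The route also implicitly confines you to $R=\Z$ (via Corollary~\ref{MaxPrincipalCS} and the identification $\Gamma_n/\Gamma_{n,r}\cong\SL(n,\Z_r)$), whereas Lemma~\ref{Sandwich}, and hence this proposition, is stated for $R\in\{\Z,\Z_m\}$. Drop the primary paragraph and keep the alternative one as the proof.
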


\section{Matrix groups over $\Z_m$}
\label{AlgorithmsforgroupsoverZm}

\subsection{Relevant theoretical results}
\label{VarStructureZm}

Let $m= p_1^{k_1} \cdots p_t^{k_t}$ where the $p_i$ are distinct
primes and $k_i\geq \abk 1$. We define a ring isomorphism $\chi:
\Z_m \rightarrow \Z_{p_1^{k_1}} \oplus \cdots \oplus
\Z_{p_t^{k_t}}$ by $\chi(a) = (a_1, \ldots , a_t)$ where
$0\leq\abk a \leq \abk m-1$, $0\leq \abk a_i \leq \abk
p_i^{k_i}-1$, and $a_i\equiv \abk a \mod p_i^{k_i}$.
\begin{lemma} \label{FundamIsom}\
\begin{itemize}
\item[{\rm (i)}] $\chi$ extends to an isomorphism of $\,
\mathrm{Mat}(n,\Z_m)$ onto $\oplus_{i=1}^t
\mathrm{Mat}\big(n,\Z_{p_i^{k_i}}\big)$, which restricts to
isomorphisms $\mathrm{GL}(n,\Z_m) \rightarrow \times_{i=1}^t
\mathrm{GL}\big(n,\Z_{p_i^{k_i}}\big)$ and $\mathrm{SL}(n,\Z_m)
\rightarrow \times_{i=1}^t \mathrm{SL}\big(n,\Z_{p_i^{k_i}}\big)$.
\item[{\rm (ii)}] Let $I = \langle a \rangle$ be an ideal of
$\Z_m$, and let $I_i$ be the ideal of $\Z_{p_i^{k_i}}$ generated
by $a_i \equiv\abk a \mod \abk p_i^{k_i}$. Denote by $K_I$,
$K_{I_i}$ the kernels of $\varphi_I$, $\varphi_{I_i}$ on
$\GL(n,\Z_m)$, $\mathrm{GL}\big(n,\Z_{p_i^{k_i}}\big)$
respectively. Then
\[
\chi (K_I) = \times_{i=1}^t K_{I_i} \quad \text{and} \quad \chi
(K_I\cap \SL(n,\Z_m)) = \times_{i=1}^t (K_{I_i}\cap \SL(n,\Z_m)).
\]
\end{itemize}
\end{lemma}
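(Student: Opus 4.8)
The plan is to reduce everything to the Chinese Remainder Theorem, of which $\chi$ is the standard incarnation, together with the functoriality of the matrix-ring construction, of the determinant, and of the reduction maps. No deep input is needed; the work is entirely bookkeeping about commuting squares.

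For (i), I would first extend $\chi$ to a map $\widetilde{\chi}$ on $\mathrm{Mat}(n,\Z_m)$ by applying it entrywise. Since $\chi$ is a ring isomorphism $\Z_m \to \bigoplus_{i=1}^{t}\Z_{p_i^{k_i}}$, the entrywise map $\widetilde{\chi}$ is a ring isomorphism of $\mathrm{Mat}(n,\Z_m)$ onto $\mathrm{Mat}\big(n,\bigoplus_{i=1}^{t}\Z_{p_i^{k_i}}\big)$; composing with the evident identification $\mathrm{Mat}\big(n,\bigoplus_{i=1}^{t}\Z_{p_i^{k_i}}\big)\cong\bigoplus_{i=1}^{t}\mathrm{Mat}(n,\Z_{p_i^{k_i}})$ (a matrix with entries in a finite direct sum of rings is the same datum as a tuple of matrices) yields the asserted ring isomorphism, still denoted $\chi$. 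Any ring isomorphism carries units to units, so $\chi$ restricts to a group isomorphism $\GL(n,\Z_m)\to\times_{i=1}^{t}\GL(n,\Z_{p_i^{k_i}})$. For $\SL$, I would use that the determinant is a polynomial with integer coefficients, hence commutes with every ring homomorphism: if $\chi(g)=(g_1,\dots,g_t)$ then $\chi(\det g)=(\det g_1,\dots,\det g_t)$, and since $\chi(1)=(1,\dots,1)$, the condition $\det g=1$ is equivalent to $\det g_i=1$ for all $i$. This gives the restriction to $\SL$.

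For (ii), the crux is that $\chi$ identifies $\varphi_I$ with $\times_{i=1}^{t}\varphi_{I_i}$. I would first note that $\chi$ induces a ring isomorphism $\Z_m/I\to\bigoplus_{i=1}^{t}\Z_{p_i^{k_i}}/I_i$, because $\chi(I)=\chi(a\Z_m)$ is the ideal generated by $\chi(a)=(a_1,\dots,a_t)$, namely $\times_{i=1}^{t}a_i\Z_{p_i^{k_i}}=\times_{i=1}^{t}I_i$. Applying this entrywise as in (i), the square relating $\varphi_I\colon\GL(n,\Z_m)\to\GL(n,\Z_m/I)$ to $\times_{i=1}^{t}\varphi_{I_i}\colon\times_{i=1}^{t}\GL(n,\Z_{p_i^{k_i}})\to\times_{i=1}^{t}\GL(n,\Z_{p_i^{k_i}}/I_i)$ commutes, with vertical maps the isomorphisms of (i). Hence $\chi$ maps $K_I=\ker\varphi_I$ isomorphically onto $\ker\big(\times_{i=1}^{t}\varphi_{I_i}\big)=\times_{i=1}^{t}\ker\varphi_{I_i}=\times_{i=1}^{t}K_{I_i}$, which is the first claim. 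The second follows by intersecting: since $\chi$ is a bijection carrying $\SL(n,\Z_m)$ onto $\times_{i=1}^{t}\SL(n,\Z_{p_i^{k_i}})$ by (i),
\[
\chi\big(K_I\cap\SL(n,\Z_m)\big)=\chi(K_I)\cap\chi\big(\SL(n,\Z_m)\big)=\Big(\times_{i=1}^{t}K_{I_i}\Big)\cap\Big(\times_{i=1}^{t}\SL(n,\Z_{p_i^{k_i}})\Big),
\]
which equals $\times_{i=1}^{t}\big(K_{I_i}\cap\SL(n,\Z_{p_i^{k_i}})\big)$. The only point that demands a little care is the diagram-chase in (ii) — verifying that $\chi$ genuinely transports the single reduction $\Z_m\to\Z_m/I$ to the tuple of reductions $\Z_{p_i^{k_i}}\to\Z_{p_i^{k_i}}/I_i$ with $I_i$ the image of $I$ — but since $\chi$ and all the quotient maps are defined entry by entry, I expect no real obstacle; the proof is essentially a chain of functoriality observations off the Chinese Remainder Theorem.
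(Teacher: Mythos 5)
Your proof is correct and uses the standard Chinese Remainder Theorem / functoriality argument that the paper clearly has in mind; the paper states the lemma without proof, treating it as routine, and your entrywise extension of $\chi$ together with the commuting square for the reduction maps is exactly the intended reasoning. One small observation worth noting: the displayed formula in the paper's part (ii) contains an apparent typo --- the factor should read $K_{I_i}\cap \SL\big(n,\Z_{p_i^{k_i}}\big)$ rather than $K_{I_i}\cap \SL(n,\Z_m)$ --- and your proof correctly produces the former.
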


For $i\geq 1$,
\[
M_{p,i} = \{ h \in \GL(n,\Z_{p^k}) \st h \equiv 1_n \mod p^i\},
\qquad N_{p,i} = \mathrm{SL}(n,\Z_{p^k})\cap M_{p,i}
\]
are normal subgroups of $\GL(n,\Z_{p^k})$.
\begin{lemma}[\mbox{Cf.} Corollary~\ref{PCSForGammaOverZm}]
\label{OrderCalcs} If $I$ is the ideal of $\Z_{p^k}$ generated by
$p^{i}$, then $\varphi_I : \abk \GL(n,\Z_{p^k}) \rightarrow \abk
\GL(n, \Z_{p^i})$ and $\varphi_I: \SL(n,\Z_{p^k})\rightarrow \abk
\SL(n, \Z_{p^i})$ are surjective, with kernels $M_{p,i}$,
$N_{p,i}$ respectively.
\end{lemma}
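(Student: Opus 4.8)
The plan is to separate the two assertions: identifying the kernels is essentially definitional, whereas surjectivity requires a short lifting argument. Throughout I assume $1\le i\le k$, as the statement of the lemma implicitly requires.

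First I would record that $\Z_{p^k}/\langle p^i\rangle\cong\Z_{p^i}$ canonically, so that $\varphi_I$ is nothing but entrywise reduction modulo $p^i$. Consequently $h\in\GL(n,\Z_{p^k})$ lies in $\ker\varphi_I$ exactly when every entry of $h-1_n$ is divisible by $p^i$, i.e.\ when $h\in M_{p,i}$; intersecting with $\SL(n,\Z_{p^k})$ gives $\ker\varphi_I=N_{p,i}$ there. (Exhibiting $M_{p,i}$ and $N_{p,i}$ as these kernels also re-proves that they are normal subgroups of $\GL(n,\Z_{p^k})$, as asserted just before the lemma.)

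For surjectivity onto $\SL(n,\Z_{p^i})$ I would invoke Proposition~\ref{GenRingElvsGamma}: $\Z_{p^k}$ and $\Z_{p^i}$ are semi-local, so $\SL(n,\Z_{p^k})=E_n$ and $\SL(n,\Z_{p^i})=E_n$. Since $\varphi_I$ sends the transvection $t_{ab}(1)$ over $\Z_{p^k}$ to $t_{ab}(1)$ over $\Z_{p^i}$, and the latter transvections generate $\SL(n,\Z_{p^i})$, the map is onto. For $\GL(n,\Z_{p^i})$ I would argue directly: given $\bar h$, lift its entries arbitrarily to $h\in\mathrm{Mat}(n,\Z_{p^k})$ with $\varphi_I(h)=\bar h$; then $\det h\equiv\det\bar h\mod p^i$ is a unit, so $p\nmid\det h$, and since $\Z_{p^k}$ is local with maximal ideal $p\Z_{p^k}$ the element $\det h$ is a unit, whence $h\in\GL(n,\Z_{p^k})$ maps to $\bar h$.

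The one place calling for a little care — the closest thing to an obstacle — appears only if one wants a self-contained proof of $\SL$-surjectivity rather than citing Proposition~\ref{GenRingElvsGamma}: one must keep a lift inside $\SL$ while not disturbing its image. Starting from $\bar h\in\SL(n,\Z_{p^i})$, lift to $h\in\GL(n,\Z_{p^k})$ as above; then $u:=\det h$ is a unit with $u\equiv 1\mod p^i$, and replacing $h$ by $h\cdot\mathrm{diag}(u^{-1},1,\dots,1)$ yields determinant $1$ while, since $u^{-1}\equiv 1\mod p^i$, leaving the reduction still equal to $\bar h$. Everything else is bookkeeping with the definitions. I would also note that surjectivity of $\varphi_I$ immediately gives the order counts $|M_{p,i}|=p^{(k-i)n^2}$ and $|N_{p,i}|=p^{(k-i)(n^2-1)}$ by comparing $|\GL(n,\Z_{p^k})|$ and $|\SL(n,\Z_{p^k})|$ with their $\Z_{p^i}$ analogues — presumably the intended use of the lemma.
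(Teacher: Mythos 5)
Your proof is correct, and the paper in fact gives no proof of this lemma at all: it is stated bare, with the `Cf.\ Corollary~\ref{PCSForGammaOverZm}' flag indicating the authors view it as immediate from the kernel description there together with surjectivity of $\varphi_I$ over the semi-local ring $\Z_{p^k}$ (Proposition~\ref{GenRingElvsGamma}). Your argument supplies exactly that reasoning — kernel identification directly from the definition of $M_{p,i}$, $N_{p,i}$, the $\SL$-surjectivity via $\SL(n,\Z_{p^i})=E_n$, and a short local-ring lifting argument for $\GL$ — so it matches what the paper implicitly relies on, with the added care about adjusting the lift's determinant if one avoids citing Proposition~\ref{GenRingElvsGamma}.
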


The notation $M_{p,i}$, $N_{p,i}$ supersedes our earlier notation for
principal congruence subgroups in this special case. Let $d_j(a)=
1_n + ae_{jj}\in \mathrm{Mat}(n,\Z_m)$.
\begin{lemma}\label{ModifiedOC2}
Suppose that $i<j\leq 2i$ and $j\leq k$. Then $M_{p,i}/M_{p,j}
\cong C_{p^{j-i}}^{n^2}$, and $N_{p,i}/N_{p,j}$ has a subgroup
isomorphic to $C_{p^{j-i}}^{n^2-1}$.
\end{lemma}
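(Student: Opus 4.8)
The plan is to exhibit explicit isomorphisms by passing to the additive structure. Write $k\geq j$ and fix $p$, so $R=\Z_{p^k}$. For $M_{p,i}/M_{p,j}$: the point is that when $i<j\leq 2i$, multiplication of matrices of the form $1_n+p^i x$ with $x\in\mathrm{Mat}(n,R)$ linearizes modulo $p^j$. Concretely, $(1_n+p^i x)(1_n+p^i y)=1_n+p^i(x+y)+p^{2i}xy$, and since $2i\geq j$ the last term vanishes mod $p^j$; thus the map $1_n+p^i x\mapsto x \bmod p^{j-i}$ is a well-defined group homomorphism $M_{p,i}\to \mathrm{Mat}(n,\Z_{p^{j-i}})=:\mathrm{Mat}(n,\Z_{p^{j-i}})$ onto the additive group. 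I would check it is surjective (every $x$ mod $p^{j-i}$ lifts, and the resulting matrix is invertible over $R$ since its reduction mod $p$ is $1_n$) and that its kernel is exactly $M_{p,j}$ (namely $1_n+p^ix\equiv 1_n \bmod p^j$ iff $p^{j-i}\mid x$ entrywise). Hence $M_{p,i}/M_{p,j}\cong \big(\Z_{p^{j-i}},+\big)^{n^2}\cong C_{p^{j-i}}^{\,n^2}$.

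For the $\mathrm{SL}$ statement, restrict the above homomorphism to $N_{p,i}=\mathrm{SL}(n,R)\cap M_{p,i}$. The image lands in the set of $x \bmod p^{j-i}$ for which $\det(1_n+p^ix)=1$; expanding the determinant, $\det(1_n+p^ix)=1+p^i\,\mathrm{tr}(x)+(\text{terms divisible by }p^{2i})$, and since $2i\geq j$ this forces $p^i\,\mathrm{tr}(x)\equiv 0\bmod p^j$, i.e. $\mathrm{tr}(x)\equiv 0 \bmod p^{j-i}$; conversely any such $x$ can be lifted to an actual $\mathrm{SL}$-element by a further determinant correction supported in degree $\geq i$ (again killed mod $p^j$). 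So $N_{p,i}/N_{p,j}$ is isomorphic to the additive group $\{x\in\mathrm{Mat}(n,\Z_{p^{j-i}}) : \mathrm{tr}(x)=0\}$. That group contains, for instance, all strictly upper-triangular matrices together with the $n-1$ independent trace-zero diagonal matrices $d_\ell(1)-d_{\ell+1}(1)$; picking $n^2-1$ independent elements of order $p^{j-i}$ (the $n(n-1)$ off-diagonal matrix units $e_{ab}$ scaled appropriately, plus these $n-1$ diagonal ones) gives a subgroup isomorphic to $C_{p^{j-i}}^{\,n^2-1}$. (In fact the whole trace-zero space is $C_{p^{j-i}}^{\,n^2-1}$, but only the stated inequality is needed.)

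The main obstacle is the surjectivity/lifting claims in the $\mathrm{SL}$ case: one must be sure that a trace-zero $x$ modulo $p^{j-i}$ actually arises from a genuine determinant-one matrix in $M_{p,i}$, not merely from something with determinant $\equiv 1 \bmod p^j$. This is handled by the standard observation that $M_{p,i}\to \mathrm{SL}$-cosets is controlled by the determinant map $M_{p,i}\to 1+p^iR\subseteq R^{*}$, which is surjective (e.g. via $d_1(a)$-type matrices), so every element of $M_{p,i}$ can be multiplied by a suitable $1_n+p^i(\text{diagonal})$, trivial mod $p^j$ since $2i\geq j$, to land in $\mathrm{SL}$; hence the restricted homomorphism on $N_{p,i}$ has image exactly the trace-zero matrices mod $p^{j-i}$. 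The condition $j\leq 2i$ is used in precisely three places—linearity of multiplication, linearity of the determinant, and triviality of the correction terms—so the hypothesis cannot be dropped. Everything else is the routine computation that the additive group $\mathrm{Mat}(n,\Z_{p^{j-i}})$ is $C_{p^{j-i}}^{\,n^2}$ and its trace-zero subspace is $C_{p^{j-i}}^{\,n^2-1}$.
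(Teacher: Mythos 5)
Your proof is correct and takes essentially the same approach as the paper: the same additive homomorphism $\theta_j(1_n+p^ix)=x\bmod p^{j-i}$ with kernel $M_{p,j}$, the same surjectivity argument (transvections and diagonal perturbations), and the same restriction to $N_{p,i}$. Where the paper merely exhibits $n^2-1$ explicit elements of $N_{p,i}$ (the $t_{rs}(p^i)$ together with $1_n+p^i(e_{rr}-e_{r+1,r+1}+e_{r,r+1}-e_{r+1,r})$) whose $\theta_j$-images generate a copy of $C_{p^{j-i}}^{n^2-1}$, you determine the image of $N_{p,i}$ exactly as the trace-zero matrices, which incidentally yields the slightly stronger conclusion $N_{p,i}/N_{p,j}\cong C_{p^{j-i}}^{n^2-1}$.
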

\begin{proof}
Treating $\mathrm{Mat}(n,\Z_{p^{j-i}})$ as an additive group, we
confirm that $\theta_j : M_{p,i} \rightarrow
\mathrm{Mat}(n,\Z_{p^{j-i}})$ defined by $\theta_j (1_n +p^i x) =
\varphi_{p^{j-i}} (x)$ is a homomorphism with kernel $M_{p,j}$.
Now $t_{rs}(p^i) \in \abk N_{p,i}$ and $d_r(p^i) \in\abk M_{p,i}$,
so $\theta_j$ is surjective. Since $N_{p,i}$ contains $1_n +\abk
p^i(e_{rr}-\abk e_{r+1,r+1}+\abk e_{r,r+1}-e_{r+1,r})$, the second
assertion follows too.
\end{proof}

\begin{lemma}\label{NiftyGHK}
$[M_{p,i},M_{p,j}] = [N_{p,i},N_{p,j}] = N_{p,i+j}$.
\end{lemma}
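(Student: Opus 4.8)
The goal is to establish the chain of equalities $[M_{p,i},M_{p,j}]=[N_{p,i},N_{p,j}]=N_{p,i+j}$. I would first observe that $N_{p,i}\le M_{p,i}$ gives $[N_{p,i},N_{p,j}]\le[M_{p,i},M_{p,j}]$ automatically, so the two outer groups sandwich each other once we show $[M_{p,i},M_{p,j}]\le N_{p,i+j}$ and $N_{p,i+j}\le[N_{p,i},N_{p,j}]$. The first of these is the routine direction: writing elements of $M_{p,i}$ and $M_{p,j}$ as $g=1_n+p^ix$ and $h=1_n+p^jy$, a direct computation of $ghg^{-1}h^{-1}$ shows the commutator is congruent to $1_n$ modulo $p^{i+j}$, and it has determinant $1$ (being a commutator), hence lies in $N_{p,i+j}$. (One should note $i+j$ may exceed $k$, in which case $N_{p,i+j}$ is trivial and there is nothing to prove; so assume $i+j\le k$.)

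The substance is the reverse inclusion $N_{p,i+j}\le[N_{p,i},N_{p,j}]$. Here I would use the transvection relations of Lemma~\ref{tijRelations}~(ii): for pairwise distinct $r,s,u$ we have $[t_{rs}(p^i),t_{su}(p^j)]=t_{ru}(p^{i+j})$, and $t_{rs}(p^i)\in N_{p,i}$, $t_{su}(p^j)\in N_{p,j}$. This shows every transvection $t_{ru}(p^{i+j})$ lies in $[N_{p,i},N_{p,j}]$ (using $n\ge3$ to find an intermediate index $s$). By Corollary~\ref{PCSForGammaOverZm} applied over $\Z_{p^k}$ — equivalently by the fact that $N_{p,i+j}$ is the normal closure of a single such transvection under $\SL(n,\Z_{p^k})$ — it then suffices to show $[N_{p,i},N_{p,j}]$ is normal in $\SL(n,\Z_{p^k})$.

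To get that normality I would argue that $[N_{p,i},N_{p,j}]\unlhd\SL(n,\Z_{p^k})$ because both $N_{p,i}$ and $N_{p,j}$ are normal in $\SL(n,\Z_{p^k})$ (Lemma~\ref{OrderCalcs}), and the mutual commutator of two normal subgroups is normal. Hence $[N_{p,i},N_{p,j}]$ is a normal subgroup containing $t_{ru}(p^{i+j})$, so it contains $\langle t_{ru}(p^{i+j})\rangle^{\SL(n,\Z_{p^k})}=N_{p,i+j}$. Combining with the easy direction yields $[M_{p,i},M_{p,j}]\le N_{p,i+j}\le[N_{p,i},N_{p,j}]\le[M_{p,i},M_{p,j}]$, forcing all four groups equal and proving the lemma.

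**Expected main obstacle.**
The delicate point is the case $j>2i$ (or more generally when $i+j$ is large relative to $k$): the clean congruence bookkeeping in the easy direction, and the identification of $N_{p,i+j}$ as a normal closure of one transvection, both need the truncation at $p^k$ handled carefully, and one must be sure that when $i+j\ge k$ everything degenerates correctly to the trivial group. The transvection computation itself is routine given Lemma~\ref{tijRelations}, and the normality argument is formal; so the only real care required is in the modular arithmetic and in invoking Corollary~\ref{PCSForGammaOverZm} in the $\Z_{p^k}$ setting.
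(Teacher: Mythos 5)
Your proof is correct and follows essentially the same route as the paper: the inclusion $[M_{p,i},M_{p,j}]\leq N_{p,i+j}$ by a direct expansion of the commutator of $1_n+p^ix$ and $1_n+p^jy$, and the reverse inclusion $N_{p,i+j}\leq[N_{p,i},N_{p,j}]$ by exhibiting $t_{ru}(p^{i+j})$ as a commutator of transvections, invoking normality of $[N_{p,i},N_{p,j}]$ in $\SL(n,\Z_{p^k})$, and applying Corollary~\ref{PCSForGammaOverZm}. Your extra care about the degenerate case $i+j\geq k$ is harmless but unnecessary, as the argument passes through unchanged when $N_{p,i+j}$ is trivial.
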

\begin{proof}
(\mbox{Cf.} Lemma~\ref{NilpotentQuotient}.) Let $a=1_n +p^ix \in
M_{p,i}$ and $b=1_n +p^jy \in M_{p,j}$. For some $z$, and
$\bar{x}$, $\bar{y}$ such that $a^{-1} =\abk 1_n +p^i\bar{x}$ and
$b^{-1} = 1_n +p^j\bar{y}$, we have
\begin{align*}
[a,b] & = (1_n+p^i\bar{x}+p^j
\bar{y}+p^{i+j}\bar{x}\bar{y})(1_n+p^ix+p^j y+p^{i+j}xy)\\
\abk & = 1_n+p^i(x+\bar{x})+p^{2i}\bar{x}x + p^j(y+\bar{y}) +
p^{2j}\bar{y}y + p^{i+j}z\\ \abk & =1_n+p^{i+j}z.
\end{align*}
Therefore $[M_{p,i},M_{p,j}]\leq M_{p,i+j} \cap \abk
\SL(n,\Z_{p^k}) = N_{p,i+j}$. Also $t_{21}(p^{i+j}) =
[t_{23}(p^i),t_{31}(p^j)] \in \abk [N_{p,i},N_{p,j}]\unlhd \abk
\SL(n,\Z_{p^k})$; thus $N_{p,i+j} \leq \abk [N_{p,i},N_{p,j}]$ by
Corollary~\ref{PCSForGammaOverZm}.
\end{proof}

\begin{lemma}\label{MiOrders}\
\begin{itemize}
\item[{\rm (i)}] $|M_{p,i}|= p^{n^2(k-i)}$. \item[{\rm (ii)}]
$|\GL(n,\Z_{p^k})| = |\GL(n,p)|\cdot p^{n^2(k-1)}$.
\end{itemize}
\end{lemma}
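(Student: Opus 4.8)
The plan is to compute both orders by exploiting the filtration $M_{p,1} \geq M_{p,2} \geq \cdots \geq M_{p,k} = 1_n$ together with the homomorphisms $\theta_j$ introduced in the proof of Lemma \ref{ModifiedOC2}. For part (i), I would argue by downward induction on $i$, or equivalently telescope the chain: for each $i$ with $1 \leq i \leq k-1$, Lemma \ref{ModifiedOC2} (applied with $j = i+1 \leq 2i$, which holds once $i \geq 1$) gives $M_{p,i}/M_{p,i+1} \cong C_p^{n^2}$, so $|M_{p,i} : M_{p,i+1}| = p^{n^2}$. Since $M_{p,k} = \{1_n\}$ (matrices congruent to $1_n$ modulo $p^k$ in $\GL(n,\Z_{p^k})$ are exactly $1_n$), multiplying these $k-i$ successive indices yields $|M_{p,i}| = p^{n^2(k-i)}$. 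One small point to check is that Lemma \ref{ModifiedOC2} as stated requires $i < j \leq 2i$; taking $j = i+1$ needs $i \geq 1$, which is exactly our range, so every step of the telescope is licensed.

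For part (ii), I would use the surjection $\varphi_I : \GL(n,\Z_{p^k}) \rightarrow \GL(n,\Z_p) = \GL(n,p)$ from Lemma \ref{OrderCalcs}, where $I = \langle p \rangle$; its kernel is $M_{p,1}$. Hence $|\GL(n,\Z_{p^k})| = |\GL(n,p)| \cdot |M_{p,1}|$, and part (i) with $i=1$ gives $|M_{p,1}| = p^{n^2(k-1)}$, which is the claim. Alternatively one can avoid even citing Lemma \ref{OrderCalcs} and instead count directly: a matrix over $\Z_{p^k}$ is invertible iff its reduction mod $p$ is, and the reduction map $\mathrm{Mat}(n,\Z_{p^k}) \rightarrow \mathrm{Mat}(n,\Z_p)$ has every fibre of size $p^{n^2(k-1)}$, so the invertible matrices number $|\GL(n,p)| \cdot p^{n^2(k-1)}$; but the homomorphism argument is cleaner and reuses machinery already in place.

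I do not anticipate a genuine obstacle here — both statements are bookkeeping on top of Lemmas \ref{OrderCalcs} and \ref{ModifiedOC2}. The only thing requiring a moment's care is confirming that the quotients $M_{p,i}/M_{p,i+1}$ telescope correctly down to the trivial group, i.e. that $M_{p,k} = 1$ and that the hypothesis $j \leq 2i$ of Lemma \ref{ModifiedOC2} is met at every rung (it is, with $j = i+1$). Everything else is immediate.
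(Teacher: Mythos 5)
Your proposal is correct and follows essentially the same route as the paper's proof, which is the one-line invocation ``Lemma~\ref{ModifiedOC2} takes care of (i); by Lemma~\ref{OrderCalcs}, we then get (ii).'' Your write-up simply makes explicit the telescoping $M_{p,i} > M_{p,i+1} > \cdots > M_{p,k} = \{1_n\}$ with each quotient of size $p^{n^2}$ (via $j=i+1$ in Lemma~\ref{ModifiedOC2}), and the application of Lemma~\ref{OrderCalcs} with $I=\langle p\rangle$ to get $|\GL(n,\Z_{p^k})| = |\GL(n,p)|\cdot|M_{p,1}|$ — precisely what the paper leaves implicit.
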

\begin{proof}
Lemma~\ref{ModifiedOC2} takes care of (i). By
Lemma~\ref{OrderCalcs}, we then get (ii).
\end{proof}
\begin{corollary}
If $\, 2i>k$ then $M_{p,i}$ is abelian of exponent $p^{k-i}$.
\end{corollary}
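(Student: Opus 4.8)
The plan is to derive the corollary directly from Lemmas~\ref{ModifiedOC2} and \ref{NiftyGHK}, both of which are already in hand. The hypothesis $2i>k$ means that if we set $j=k$, then $i<j\leq 2i$ and $j\leq k$, so Lemma~\ref{ModifiedOC2} applies with this choice of $j$. Since $M_{p,k}=1_n$ (as $h\equiv 1_n \mod p^k$ forces $h=1_n$ in $\GL(n,\Z_{p^k})$), we get $M_{p,i}=M_{p,i}/M_{p,k}\cong C_{p^{k-i}}^{n^2}$, which is abelian of exponent $p^{k-i}$. That already gives everything; the only thing to check is that $2i>k$ genuinely yields $i<k$ (so that the exponent $p^{k-i}$ is a nontrivial claim rather than vacuous, though the statement is correct even if $i=k$ and the group is trivial) — this is immediate unless $i=k$, in which case $M_{p,i}=1$ and the assertion holds trivially.

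Alternatively, and perhaps more in the spirit of the surrounding development, I would argue via Lemma~\ref{NiftyGHK}: $[M_{p,i},M_{p,i}]=N_{p,2i}$, and $2i>k$ forces $N_{p,2i}=N_{p,k}=1_n$ (again because the only element of $\SL(n,\Z_{p^k})$ congruent to $1_n$ modulo $p^{2i}$, hence modulo $p^k$, is the identity), so $M_{p,i}$ is abelian. For the exponent, take $a=1_n+p^ix\in M_{p,i}$; then $a^p=(1_n+p^ix)^p=1_n+p^{i+1}\binom{p}{1}/p\cdot x+\cdots$, and more cleanly one computes inductively that $a^{p^{k-i}}\equiv 1_n \mod p^{k}$ since each multiplication by $a$ raises the $p$-adic valuation of the ``error term'' past $i$ and repeated squaring-type estimates (exactly the kind done in the proof of Lemma~\ref{NiftyGHK}) push it to at least $k$ after $k-i$ steps; and the exponent is exactly $p^{k-i}$ because $d_j(p^i)\in M_{p,i}$ has order precisely $p^{k-i}$ in $\Z_{p^k}$.

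I expect no real obstacle here: the corollary is essentially a specialization of results already proved. The only mildly delicate point is pinning down the exponent exactly rather than just an upper bound — the upper bound $p^{k-i}$ comes for free from Lemma~\ref{ModifiedOC2} (with $j=k$), and the lower bound from exhibiting a single element of that order, e.g. $1_n+p^ie_{11}$. I would therefore present the short proof: invoke Lemma~\ref{ModifiedOC2} with $j=k$ to get $M_{p,i}\cong C_{p^{k-i}}^{n^2}$ outright, note this settles both abelianness and the exponent, and be done.

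\begin{proof}
If $i=k$ then $M_{p,i}=1_n$ and the claim is vacuous, so assume $i<k$. Since $2i>k$, we may apply Lemma~\ref{ModifiedOC2} with $j=k$: then $i<j\leq 2i$ and $j\leq k$, so $M_{p,i}/M_{p,k}\cong C_{p^{k-i}}^{n^2}$. But $M_{p,k}=1_n$, because a matrix in $\GL(n,\Z_{p^k})$ congruent to $1_n$ modulo $p^k$ equals $1_n$. Hence $M_{p,i}\cong C_{p^{k-i}}^{n^2}$ is abelian of exponent $p^{k-i}$.
\end{proof}
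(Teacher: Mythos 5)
Your proof is correct and follows the route the paper clearly intends: the corollary sits immediately after Lemma~\ref{MiOrders}, whose proof already appeals to Lemma~\ref{ModifiedOC2}, and your application of that lemma with $j=k$ (noting $M_{p,k}=1_n$) gives $M_{p,i}\cong C_{p^{k-i}}^{n^2}$ in one step, from which both the abelianness and the exact exponent are immediate.
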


The next two corollaries use Lemma~\ref{FundamIsom}. Let $a =
p_1^{j_1}\cdots p_t^{j_t}$ where $0\leq j_i\leq k_i$. Note that
$a_i \equiv\abk a \mod \abk p_i^{k_i}$ generates the ideal
$\langle p_i^{j_i}\rangle$ of $\Z_{p_i^{k_i}}$. Set $M_{p_i,0} =
\GL(n,\Z_{p_i^{k_i}})$ and $N_{p_i,0} =\abk
\SL(n,\Z_{p_i^{k_i}})$.
\begin{corollary}\label{PCSGOrder}\
\begin{itemize}
\item[{\rm (i)}] $|\GL(n,\Z_{m})| =
\prod_{i=1}^t\big(|\GL(n,p_i)|\cdot p_i^{n^2(k_i-1)}\big)$.

\item[] \vspace{-12.5pt}

\item[{\rm (ii)}] The PCS of $\GL(n,\Z_{m})$ of level $a$ has
order $\prod_{i=1}^t |M_{p_i,j_i}|$.
\end{itemize}
\end{corollary}

\begin{lemma}\label{NiOrders} \
\begin{itemize}
\item[{\rm (i)}] $|\SL(n,\Z_{p^k})| = |\SL(n,p)|\cdot
p^{(n^2-1)(k-1)}$. \item[{\rm (ii)}] For $i\geq 1$,
$N_{p,i}/N_{p,i+1}\cong C_{p}^{n^2-1}$ and $|N_{p,i}|=
p^{(n^2-1)(k-i)}$.
\end{itemize}
\end{lemma}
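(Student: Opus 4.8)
The plan is to read off both parts from the determinant homomorphism together with the order formulas already established, proving (i) first and then bootstrapping to (ii). There is no real obstacle here: the argument is almost entirely bookkeeping over Lemmas~\ref{OrderCalcs}, \ref{ModifiedOC2} and \ref{MiOrders}, and the only point deserving care is the degenerate behaviour at $i=k$ and at $k=1$, where the groups involved collapse to the trivial group.

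For (i): the determinant $\det\colon\GL(n,\Z_{p^k})\to\Z_{p^k}^*$ is surjective with kernel $\SL(n,\Z_{p^k})$, so $|\SL(n,\Z_{p^k})| = |\GL(n,\Z_{p^k})|/|\Z_{p^k}^*|$. Here $|\Z_{p^k}^*| = p^{k-1}(p-1)$, and Lemma~\ref{MiOrders}(ii) supplies $|\GL(n,\Z_{p^k})| = |\GL(n,p)|\cdot p^{n^2(k-1)}$. The $k=1$ case of the same determinant computation gives $|\GL(n,p)| = (p-1)\,|\SL(n,p)|$, so the factors $p-1$ cancel, leaving $|\SL(n,\Z_{p^k})| = |\SL(n,p)|\cdot p^{n^2(k-1)-(k-1)} = |\SL(n,p)|\cdot p^{(n^2-1)(k-1)}$, as claimed.

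For the order formula in (ii), I would invoke Lemma~\ref{OrderCalcs} with $I=\langle p^i\rangle$: it identifies $N_{p,i}$ as the kernel of the surjection $\varphi_I\colon\SL(n,\Z_{p^k})\to\SL(n,\Z_{p^i})$, so $|N_{p,i}| = |\SL(n,\Z_{p^k})|/|\SL(n,\Z_{p^i})|$. Applying (i) with exponents $k$ and $i$ in turn, the common factor $|\SL(n,p)|$ cancels, giving $|N_{p,i}| = p^{(n^2-1)(k-1)}/p^{(n^2-1)(i-1)} = p^{(n^2-1)(k-i)}$ for $1\le i\le k$ (for $i\ge k$, $N_{p,i}$ is trivial, in agreement with the formula at $i=k$). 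To fix the isomorphism type for $1\le i\le k-1$, note first that this order formula gives $|N_{p,i}:N_{p,i+1}| = p^{n^2-1}$; and Lemma~\ref{ModifiedOC2} with $j=i+1$ (the hypotheses $i<j\le 2i$ and $j\le k$ hold exactly when $1\le i\le k-1$) shows $N_{p,i}/N_{p,i+1}$ has a subgroup isomorphic to $C_p^{n^2-1}$. A group of order $p^{n^2-1}$ possessing a subgroup of that order equals it, so $N_{p,i}/N_{p,i+1}\cong C_p^{n^2-1}$.

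A self-contained alternative, bypassing Lemma~\ref{MiOrders}, is to compute $|M_{p,i}:N_{p,i}|$ directly: $\det$ carries $M_{p,i}$ onto the subgroup $1+p^i\Z_{p^k}\le\Z_{p^k}^*$ of order $p^{k-i}$ (surjectivity witnessed by $\mathrm{diag}(u,1,\dots,1)$), so $|N_{p,i}| = |M_{p,i}|\,p^{-(k-i)}$; telescoping the quotients $M_{p,i}/M_{p,i+1}\cong C_p^{n^2}$ of Lemma~\ref{ModifiedOC2} then yields $|M_{p,i}| = p^{n^2(k-i)}$, and the rest goes through as above. I would present the first route, since $|\GL(n,\Z_{p^k})|$ is already in hand.
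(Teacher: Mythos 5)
Your proof is correct and takes essentially the same approach as the paper: both parts rest on the determinant surjection, Lemma~\ref{MiOrders}(ii), Lemma~\ref{OrderCalcs}, and Lemma~\ref{ModifiedOC2}. The only difference is in arrangement: you compute $|N_{p,i}|$ directly from the surjection $\SL(n,\Z_{p^k})\twoheadrightarrow\SL(n,\Z_{p^i})$ and two applications of (i), then read off the index $p^{n^2-1}$, whereas the paper lower-bounds each index $|N_{p,i}:N_{p,i+1}|$ by $p^{n^2-1}$ via Lemma~\ref{ModifiedOC2} and then pins all of them to exactly $p^{n^2-1}$ by comparing the telescoping product against $|N_{p,1}|$ as given by (i) and Lemma~\ref{OrderCalcs}.
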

\begin{proof}
The unit group of $\Z_{p^k}$ has order $(p-1)p^{k-1}$, so 
Lemma~\ref{MiOrders} (ii) gives (i).
By Lemma~\ref{ModifiedOC2}, $|N_{p,i}/N_{p,i+1}|\geq p^{n^2-1}$. 
Thus, if
$|N_{p,j}/N_{p,j+1}|\neq \abk p^{n^2-1}$ for some $j$ then
$|N_{p,1}|> \abk p^{(n^2-1)(k-1)}$, which contradicts (i) by
Lemma~\ref{OrderCalcs}.
\end{proof}

\begin{corollary}\label{PCSSLOrder}\
\begin{itemize}
\item[{\rm (i)}] $|\SL(n,\Z_{m})| =
\prod_{i=1}^t\big(|\SL(n,p_i)|\cdot p_i^{(n^2-1)(k_i-1)}\big)$.

\item[] \vspace{-12.5pt}

\item[{\rm (ii)}] The PCS of $\SL(n,\Z_{m})$ of level $a$ has
order $\prod_{i=1}^t |N_{p_i,j_i}|$.
\end{itemize}
\end{corollary}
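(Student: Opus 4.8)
Both parts follow from Lemma~\ref{FundamIsom} by passing to the prime-power factorization and then applying the order computations of Lemma~\ref{NiOrders}. For (i), recall that $\chi$ restricts to a group isomorphism $\SL(n,\Z_m) \rightarrow \times_{i=1}^t \SL(n,\Z_{p_i^{k_i}})$ by Lemma~\ref{FundamIsom}~(i). Since an isomorphism of groups preserves order, $|\SL(n,\Z_m)| = \prod_{i=1}^t |\SL(n,\Z_{p_i^{k_i}})|$. Substituting the value $|\SL(n,\Z_{p_i^{k_i}})| = |\SL(n,p_i)| \cdot p_i^{(n^2-1)(k_i-1)}$ supplied by Lemma~\ref{NiOrders}~(i) yields the stated formula. (This is exactly parallel to the derivation of Corollary~\ref{PCSGOrder}~(i) from Lemma~\ref{MiOrders}~(ii), with $\GL$ replaced by $\SL$ throughout.)

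For (ii), let $a = p_1^{j_1}\cdots p_t^{j_t}$ with $0\leq j_i\leq k_i$, and let $I=\langle a\rangle$ be the corresponding ideal of $\Z_m$. As noted before Corollary~\ref{PCSGOrder}, $a_i \equiv a \bmod p_i^{k_i}$ generates the ideal $I_i = \langle p_i^{j_i}\rangle$ of $\Z_{p_i^{k_i}}$, and by Lemma~\ref{OrderCalcs} (together with the conventions $N_{p_i,0}=\SL(n,\Z_{p_i^{k_i}})$) the kernel of $\varphi_{I_i}$ on $\SL(n,\Z_{p_i^{k_i}})$ is $N_{p_i,j_i}$. Lemma~\ref{FundamIsom}~(ii) then identifies $\chi$ of the PCS of $\SL(n,\Z_m)$ of level $a$ with $\times_{i=1}^t N_{p_i,j_i}$, so taking orders gives $\prod_{i=1}^t |N_{p_i,j_i}|$, as claimed. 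If desired one can make this fully explicit using $|N_{p_i,j_i}| = p_i^{(n^2-1)(k_i-j_i)}$ from Lemma~\ref{NiOrders}~(ii) for $j_i\geq 1$, and $|N_{p_i,0}| = |\SL(n,p_i)|\cdot p_i^{(n^2-1)(k_i-1)}$ for $j_i=0$.

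There is no real obstacle here: the content is entirely in the structural results already established. The only point requiring a moment's care is bookkeeping with the boundary conventions — making sure the formulas behave correctly when some $j_i=0$ (the factor is the full special linear group over $\Z_{p_i^{k_i}}$) or $j_i=k_i$ (the factor $N_{p_i,k_i}$ is trivial, consistent with $p_i^{(n^2-1)(k_i-j_i)}=1$) — so that the product expression is uniform in all cases.
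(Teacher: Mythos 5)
Your proof is correct and follows exactly the route the paper intends: the paper itself says ``The next two corollaries use Lemma~\ref{FundamIsom}'' and sets up the notation $N_{p_i,0}=\SL(n,\Z_{p_i^{k_i}})$ precisely to make the direct-product decomposition and order count work uniformly. Your added remarks on the boundary cases $j_i=0$ and $j_i=k_i$ are consistent with those conventions and with Lemma~\ref{NiOrders}.
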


Define subsets
\[
S_{c} \! =\{ t_{rs}(c) , \hspace*{1pt} 1_n+c\hspace*{.5pt}
(e_{uu}+e_{u,u+1}-e_{u+1,u}-e_{u+1,u+1})  \st r\neq s, \, 1\leq r,
s \leq n,\, 1\leq u \leq n-1 \}
\]
\noindent of $\mathrm{SL}(n,\Z_m)$ and
\begin{align*}
T_{c}  =\{  t_{rs}(c) , d_1(c), \ldots , d_n(c) \st r\neq s, \,
1\leq r, s \leq n \}
\end{align*}
of $\mathrm{Mat}(n,\Z_m)$. We see that $T_c \leq \GL(n,\Z_m)$ if
and only if $1+c$ is a unit of $\Z_m$.
\begin{lemma}\label{MiNiGensets}
Suppose that $1\leq i<k$.
\begin{itemize}
\item[{\rm (i)}] $N_{p,i}$ has minimal size generating set $S_{p^i}$,
so $\mathrm{min}(N_{p,i}) = n^2 - 1$. \item[{\rm (ii)}] Unless
$p=2$, $k\geq 3$, and $i = 1$, $\mathrm{min}(M_{p,i}) = n^2$ and
$M_{p,i}$ has minimal size generating set $T_{p^i}$. \item[{\rm (iii)}]
$M_{2,1}$ for $k\geq 3$ has minimal size generating set $T_{2} \cup \{
\mathrm{diag}(-1,1,\ldots , 1)\}$ of size $n^2+1$.
\end{itemize}
\end{lemma}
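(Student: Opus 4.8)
The plan is to split each part into a lower bound for $\mathrm{min}$, coming from the Burnside basis theorem applied to the relevant finite $p$-group, and a matching upper bound obtained by showing the displayed set generates. For the lower bounds: when $1\le i<k$ the groups $N_{p,i}$ and $M_{p,i}$ are finite $p$-groups. By Lemma~\ref{NiftyGHK} the derived group of $N_{p,i}$ is $N_{p,2i}\le N_{p,i+1}$, and raising to the $p$-th power increases the level by at least one, so $\Phi(N_{p,i})\le N_{p,i+1}$; hence $\mathrm{min}(N_{p,i})=\dim_{\F_p}\!\bigl(N_{p,i}/\Phi(N_{p,i})\bigr)\ge\dim_{\F_p}\!\bigl(N_{p,i}/N_{p,i+1}\bigr)=n^2-1$ by Lemma~\ref{NiOrders}. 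The same argument with Lemma~\ref{ModifiedOC2} gives $\mathrm{min}(M_{p,i})\ge n^2$. Since $S_{p^i}$ and $T_{p^i}$ have $n^2-1$ and $n^2$ elements, for (i) and (ii) only generation remains. For (iii) I would sharpen the bound: identify $M_{2,2}/M_{2,3}$ with $\mathrm{Mat}(n,\F_2)$ via $1_n+4x\mapsto x\bmod 2$ (valid for $k\ge 3$ by Lemma~\ref{ModifiedOC2}). The image of $[M_{2,1},M_{2,1}]=N_{2,2}$ (Lemma~\ref{NiftyGHK}) is trace-zero, and so is the image of each square $(1_n+2x)^2=1_n+4(x+x^2)$, because over $\F_2$ one has $\mathrm{tr}(x^2)=\mathrm{tr}(x)$, whence $\mathrm{tr}(x+x^2)=0$. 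Thus $\Phi(M_{2,1})$ meets $M_{2,2}/M_{2,3}$ in a subspace of codimension at least $1$, forcing $[M_{2,1}:\Phi(M_{2,1})]\ge 2^{n^2+1}$, i.e.\ $\mathrm{min}(M_{2,1})\ge n^2+1$; and the displayed set has exactly $n^2+1$ elements.

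To prove $\langle S_{p^i}\rangle=N_{p,i}$ I would use downward induction on $i$. Base case $i=k-1$: $N_{p,k-1}$ is elementary abelian of rank $n^2-1$ (Lemma~\ref{NiOrders}), and under the isomorphism $1_n+p^{k-1}x\mapsto x$ onto the trace-zero matrices over $\F_p$, the set $S_{p^{k-1}}$ maps onto a spanning set, hence — having the right cardinality — a basis. For the step, the $p$-th powers of the members of $S_{p^i}$ are precisely those of $S_{p^{i+1}}$: clear for transvections, and for $g=1_n+p^iX$ with $X=e_{uu}+e_{u,u+1}-e_{u+1,u}-e_{u+1,u+1}$ it follows from $X^2=0$, giving $g^p=1_n+p^{i+1}X$. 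Hence $N_{p,i+1}=\langle S_{p^{i+1}}\rangle\le\langle S_{p^i}\rangle$; since $S_{p^i}$ also maps onto a generating set of $N_{p,i}/N_{p,i+1}$, we get $\langle S_{p^i}\rangle=N_{p,i}$, proving (i).

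For (ii) and (iii) I would reduce to $\mathrm{SL}$ via the determinant, which maps $M_{p,i}$ onto $1+p^i\Z_{p^k}$ with kernel $N_{p,i}$. The crux is that each non-transvection generator $g_u^{(i)}$ of $S_{p^i}$ lies in $\langle T_{p^i}\rangle$: with $\beta=1+p^i$, its nontrivial $2\times2$ block factors as $\mathrm{diag}(\beta,1)\cdot\bigl(\begin{smallmatrix}1&0\\-p^i&1\end{smallmatrix}\bigr)\cdot\mathrm{diag}(1,\beta^{-1})\cdot\bigl(\begin{smallmatrix}1&p^i\beta^{-1}\\0&1\end{smallmatrix}\bigr)$, and these four factors are $d_u(p^i)$, $t_{u+1,u}(p^i)^{-1}$, $d_{u+1}(p^i)^{-1}$, and a power of $t_{u,u+1}(p^i)$ (valid since $\beta^{-1}$ is a unit). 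Hence $N_{p,i}=\langle S_{p^i}\rangle\le\langle T_{p^i}\rangle$ by (i), so $\langle T_{p^i}\rangle=\det^{-1}\!\bigl(\langle 1+p^i\rangle\bigr)$, and $\langle T_{p^i}\rangle=M_{p,i}$ if and only if $\langle 1+p^i\rangle=1+p^i\Z_{p^k}$. This holds for $p$ odd, and for $p=2$ with $i\ge2$, but fails when $p=2$, $i=1$, $k\ge3$, where $1+2\Z_{2^k}=\Z_{2^k}^{*}$ is non-cyclic and $\langle 3\rangle$ has index $2$. Thus (ii) follows (with $\mathrm{min}(M_{p,i})=n^2$), and the exceptional case is isolated. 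In that case $\langle T_2\rangle$ has index $2$ in $M_{2,1}$ with $\det\langle T_2\rangle=\langle3\rangle$; since $\det\bigl(\mathrm{diag}(-1,1,\dots,1)\bigr)=-1$ and $\langle3,-1\rangle=\Z_{2^k}^{*}$ (as $-1\notin\langle3\rangle$ while $|\langle3\rangle|\cdot|\langle-1\rangle|=2^{k-2}\cdot2=|\Z_{2^k}^{*}|$), adjoining $\mathrm{diag}(-1,1,\dots,1)$ to $T_2$ produces a subgroup of $M_{2,1}$ containing $N_{2,1}$ with full determinant image, hence all of $M_{2,1}$; this, with the sharpened lower bound, gives (iii).

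The main obstacle is the exceptional case. The improved lower bound turns on the identity $\mathrm{tr}(x^2)=\mathrm{tr}(x)$ over $\F_2$, which is easy to overlook, and the matching upper bound needs both the explicit block factorization showing $S_{p^i}\subseteq\langle T_{p^i}\rangle$ — the device that drives the determinant reduction throughout — and the bookkeeping with the non-cyclic group $\Z_{2^k}^{*}$ required to see that precisely one extra generator $\mathrm{diag}(-1,1,\dots,1)$ is both necessary and sufficient.
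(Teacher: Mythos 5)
Your proof is correct, and it takes a genuinely different route from the paper's. For (i) the paper observes (via the proof of Lemma~\ref{ModifiedOC2}) that $N_{p,i}=\langle S_{p^i},N_{p,2i}\rangle$ and then quotes Lemma~\ref{NiftyGHK} to identify $N_{p,2i}$ as the derived subgroup of the nilpotent group $N_{p,i}$, so it may be discarded; you instead run a downward induction on $i$, showing that $p$-th powers of the elements of $S_{p^i}$ are exactly the elements of $S_{p^{i+1}}$ (using $X^2=0$ for the non-transvection generators) and that $S_{p^i}$ maps onto a generating set of $N_{p,i}/N_{p,i+1}$. For (ii) the paper writes $M_{p,i}=\langle T_{p^i},M_{p,2i}\rangle$ and disposes of $M_{p,2i}/N_{p,2i}$ via the membership $1+p^{2i}\in\langle 1+p^i\rangle$; you instead produce an explicit $2\times 2$ block factorization showing $S_{p^i}\subseteq\langle T_{p^i}\rangle$ and then push everything through the determinant map $M_{p,i}\twoheadrightarrow 1+p^i\Z_{p^k}$, isolating the exceptional case exactly where $\langle 1+p^i\rangle$ fails to be all of $1+p^i\Z_{p^k}$. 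These routes encode the same number-theoretic fact but organize the group-theoretic bookkeeping differently. For (iii) the paper declines to give a proof (``left as an exercise''); you supply a complete argument, including a sharpened Burnside-basis lower bound obtained by showing $\Phi(M_{2,1})$ maps into the trace-zero hyperplane of $M_{2,2}/M_{2,3}\cong\mathrm{Mat}(n,\F_2)$, which rests on the pleasant $\F_2$-identity $\mathrm{tr}(x^2)=\mathrm{tr}(x)$. The paper's treatment of (i) and (ii) is arguably slicker because the commutator structure does double duty, but your treatment is more self-contained, carries the determinant reduction uniformly through all three parts, and actually proves the statement the paper only asserts for $M_{2,1}$.
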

\begin{proof}
In the proof of Lemma~\ref{ModifiedOC2} we saw that $N_{p,i}=
\langle S_{p^i}, N_{p,2i}\rangle$. Since $N_{p,i}$ is nilpotent
with derived group $N_{p,2i}$ by Lemma~\ref{NiftyGHK}, we have
$N_{p,i}= \langle S_{p^i}\rangle$. So
$\mathrm{min}(N_{p,i})=\mathrm{min}(N_{p,i}/N_{p,i+1})=\abk n^2-1$
by Lemma~\ref{NiOrders}~(ii).

The rest of the proof is along similar lines. Note that $M_{p,i}=
\langle T_{p^i}, M_{p,2i}\rangle$, and $M_{p,2i}/N_{p,2i}$ is
trivial when $2i\geq k$, or cyclic of order $p^{k-2i}$ generated
by the coset of $d_1(p^{2i})$ otherwise. Also $1+p^{2i} \in
\langle 1+p^i\rangle\leq \abk \Z_{p^k}^*$ unless $p=2$, $k\geq 3$,
and $i=1$; whereas $5 \in \langle -1, 3\rangle = \Z_{2^k}^*$ for
$k\geq 3$. Therefore $M_{p,i}=\abk \langle T_{p^i},
N_{p,2i}\rangle = \langle T_{p^i}\rangle$ in (ii). Since
$|T_{p^i}|=n^2$ and $M_{p,i}/M_{p,i+1}$ has rank $n^2$, this
proves (ii). The verification of (iii) is left as an exercise.
\end{proof}

\begin{proposition}\label{AltMiNiGensets}
Let $H$, $K$ be non-trivial principal congruence subgroups of
level $a = p_1^{j_1}\cdots p_t^{j_t}\neq m$ in $\GL(n,\Z_m)$,
$\SL(n,\Z_m)$ respectively, where $1\leq j_i\leq k_i$ for 
all $i$. 
Then
\begin{itemize}
\item[{\rm (i)}] $\mathrm{min}(H) = n^2$; unless $k_2\geq 3$ and
the Sylow $2$-subgroup of $\chi(H)$ is $M_{2,1}$, in which
case $\mathrm{min}(H) = n^2+1$. 
\item[{\rm (ii)}] $\mathrm{min}(K)= n^2-1$.
\end{itemize}
\end{proposition}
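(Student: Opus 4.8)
The plan is to reduce the whole statement to the local computations of Lemma~\ref{MiNiGensets} by means of the primary decomposition $\chi$ of Lemma~\ref{FundamIsom}. First I would note that, with the conventions $M_{p_i,0}=\GL(n,\Z_{p_i^{k_i}})$ and $N_{p_i,0}=\SL(n,\Z_{p_i^{k_i}})$ fixed earlier, Lemma~\ref{FundamIsom}(ii) together with Lemma~\ref{OrderCalcs} identifies $\chi(H)$ with $\times_{i=1}^{t}M_{p_i,j_i}$ and $\chi(K)$ with $\times_{i=1}^{t}N_{p_i,j_i}$, the $j_i$ being exactly those in the hypothesis. The assumption $1\le j_i\le k_i$ for every $i$ is precisely what makes this useful: by Lemma~\ref{MiOrders}(i) and Lemma~\ref{NiOrders}(ii) each factor $M_{p_i,j_i}$, $N_{p_i,j_i}$ is then a (possibly trivial) $p_i$-group, so $\chi(H)$ and $\chi(K)$ are finite nilpotent groups whose non-trivial Sylow subgroups are exactly the non-trivial factors, a factor being trivial iff $j_i=k_i$.

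Next I would invoke the standard fact that $\mathrm{min}(G)=\max_p\mathrm{min}(G_p)$ for a finite nilpotent group $G$ with Sylow subgroups $G_p$ — equivalently, $\mathrm{min}(G_1\times\cdots\times G_t)=\max_i\mathrm{min}(G_i)$ whenever the $G_i$ have pairwise coprime orders (one inequality is projection onto factors; the other follows from the Chinese Remainder Theorem, since a subdirect product of groups of pairwise coprime orders is the full direct product; alternatively, from Gasch\"utz's theorem and $G/\Phi(G)\cong\prod_p G_p/\Phi(G_p)$), with $\mathrm{min}$ of the trivial group read as $0$. This yields $\mathrm{min}(H)=\max_i\mathrm{min}(M_{p_i,j_i})$ and $\mathrm{min}(K)=\max_i\mathrm{min}(N_{p_i,j_i})$. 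Since $a\ne m$ there is at least one index with $j_i<k_i$ — which is also what non-triviality of $H$, $K$ amounts to — so these maxima are attained on genuinely non-trivial factors.

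Finally I would substitute the values from Lemma~\ref{MiNiGensets}. For each $i$ with $1\le j_i<k_i$, part~(i) of that lemma gives $\mathrm{min}(N_{p_i,j_i})=n^2-1$, whence $\mathrm{min}(K)=n^2-1$; and parts~(ii)--(iii) give $\mathrm{min}(M_{p_i,j_i})=n^2$ except in the single exceptional situation $p_i=2$, $k_i\ge 3$, $j_i=1$, where it equals $n^2+1$. Taking the maximum over $i$, $\mathrm{min}(H)=n^2$ unless such an exceptional index occurs — that is, unless $2\mid m$, the exponent of $2$ in $m$ is at least $3$, and the Sylow $2$-subgroup of $\chi(H)$ equals $M_{2,1}$ — in which case $\mathrm{min}(H)=n^2+1$, matching the claimed formula. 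I expect the only step needing genuine care to be the reduction in the second paragraph: one must verify that the factors really do have pairwise coprime orders, which is exactly why the hypothesis $j_i\ge 1$ cannot be weakened (otherwise a factor could be the full $\GL(n,\Z_{p_i^{k_i}})$, whose order is divisible by several primes), and one must keep careful track of the trivial factors and of the genuine prime-$2$ exception when reading off the maximum.
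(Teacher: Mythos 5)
Your proposal is correct and follows essentially the same route as the paper: decompose via $\chi$ into factors $M_{p_i,j_i}$ (resp.\ $N_{p_i,j_i}$), observe these have pairwise coprime orders since $j_i\geq 1$, use the fact that $\mathrm{min}$ of a direct product of groups of pairwise coprime orders is the maximum of the factor $\mathrm{min}$'s, and read off the values from Lemma~\ref{MiNiGensets}. The only cosmetic difference is that the paper justifies the coprime-product formula by exhibiting an explicit generating set of the right size, whereas you cite CRT/Gasch\"utz; your additional remarks about trivial factors and the role of $a\neq m$ are sound and just make explicit what the paper leaves implicit.
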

\begin{proof}
If $X$, $Y$ are groups of coprime order with generating
sets $\{x_1, \ldots, x_{r_1}\}\subseteq \abk X$ and 
$\{ y_1,\ldots, y_{r_2}\}\subseteq Y$ of minimal size, 
where $r_1\leq r_2$, then $\mathrm{min}(X\times Y) = r_2$. 
Indeed
\[
X\times Y = \langle (x_i,y_i), (1, y_j) :  
1\leq i \leq r_1 ;\, r_1+1\leq j \leq r_2\rangle.
\]
The result follows from Lemmas~\ref{FundamIsom}~(ii) and
\ref{MiNiGensets}. 
\end{proof}
\begin{remark}\
\begin{itemize}
\item[(i)] If $j_i = 0$ for any $i$ then 
a full GL ($4$-generated) or SL ($2$-generated)
appears as a factor in $H$ or $K$.
\item[(ii)] The proof of Proposition~\ref{AltMiNiGensets} shows how to
construct minimal size generating sets for $H$ and $K$ with the aid of
Lemma~\ref{MiNiGensets}. Note that we get a generating set for a
PCS in $\SL(n,\Z_m)$ by reducing \eqref{VenkaGenSet} in
Proposition~\ref{VenkaSury} modulo $p$.
\end{itemize}
\end{remark}

\subsection{Computing in $\GL(n,\Z_m)$}\label{DataStructureZm}

As above, suppose that $m\geq 2$ has prime factorization
$\prod_{i=1}^tp_i^{k_i}$. Let $\chi$ be the isomorphism introduced
just before Lemma~\ref{FundamIsom}. We identify $H\leq
\GL(n,\Z_m)$ with $\chi(H)$.

To compute with $H$, we use composition tree methods and the data
structure from~\cite{Hulpke2013}. The latter consists of an
effective homomorphism into $\times_{i=1}^t\GL(n,p_i)$ whose
kernel $K$ is the solvable radical of $H$, and a polycyclic
generating sequence (PCGS) for $K$. Data structures for the images
of the projections of $H$ modulo $p_i^{k_i}$ can be combined into
a data structure for $H$. We therefore assume that $m=p^k$.

Clearly $H/K$ is isomorphic to a quotient of $\varphi_p(H)\leq\abk
\glnp$, and a PCGS for $\varphi_p(H)$ gives the initial terms of a
PCGS for $K$; the rest are found by reductions modulo $p^e$
(cf.~Subsection~\ref{VarStructureZm}). As we have seen, if $M$ is
the kernel of reduction modulo $p^e$ and $N$ the kernel of
reduction modulo $p^{e+1}$, then $M/N$ is described by matrices
$1_n+p^e x$ for $x\in\mathrm{Mat}(n,p)$, which multiply by
addition of their $x$-parts. A PCGS for the elementary abelian
group $M/N$ can be determined easily by linear algebra.

\subsection{Subnormal structure}
\label{SubNStructureZm}

Let $n\geq 3$. We adhere to previous notation and conventions.

Let ${\tt Level}$ be a function that returns $\ell(H)$ for a
subgroup $H=\abk\gpess$ of $G_n = \abk \GL(n,\Z_m)$; see
Lemma~\ref{LevelViaGenerators}.

\vspace*{10pt}

${\tt MaxPCS}(H)$

\vspace*{1mm}

Input: $H\leq G_n$.

Output: a generating set for a maximal PCS of $\Gamma_n=
\SL(n,\Z_m)$ in $H$.

\vspace*{.25mm}

\begin{enumerate}

\item $l:= {\tt Level} (H)$.

\abk

\item  \label{MembTestSteppe}  If $l = 0$ then return $1_n$,

else return a generating set $L$ for the PCS of level $a$ in
$\Gamma_n$ as given by Proposition~\ref{AltMiNiGensets},

where $a$ is minimal subject to $a$ dividing $m$, $l$ dividing
$a$, and $L\subseteq H$.
\end{enumerate}

\vspace*{10pt}

Step (2) requires membership testing. As an application of ${\tt
MaxPCS}$, we have

\vspace*{10pt}

${\tt IsSpecialLinear}(H)$

\vspace*{1mm}

Input: $H\leq \Gamma_n$.

Output: ${\tt true}$ if $H=\Gamma_n$; ${\tt false}$ otherwise.

\vspace*{.25mm}

\begin{enumerate}
\item[] If ${\tt Level}({\tt MaxPCS}(H)) = 1$ then return ${\tt
true}$
\begin{itemize}
\item[] else return ${\tt false}$.
\end{itemize}

\end{enumerate}

\vspace*{10pt}

The following reiterates Theorem~\ref{RelationToMaximalPCS}.

\vspace*{10pt}

${\tt IsSubnormal}(H)$

\vspace*{1mm}

Input: $H\leq G_n$.

Output: ${\tt true}$ and an upper bound $d$ on the defect of $H$
if $H \, \mathrm{sn}\, G_n$; ${\tt false}$ otherwise.

\vspace*{.25mm}

\begin{enumerate}
\item $l_1:= {\tt Level}(H)$, $l_2:={\tt Level}({\tt MaxPCS}(H))$.

\abk

\item If $\nexists$ $ e$ such that $l_2 \hspace*{.2mm} \big|
\hspace*{.2mm} l_1^e$ then return ${\tt false}$,

else return ${\tt true}$ and $d := e'+1$ where $e':=$ the least
$e$ such that $l_2 \hspace*{.2mm} \big| \hspace*{.2mm} l_1^e$.
\end{enumerate}

\vspace*{7.5pt}

\begin{remark}
Let $H\leq \Gamma_n$. Obviously $H \, \mathrm{sn}\, \Gamma_n$ if
and only if $H \, \mathrm{sn}\, G_n$. The defect of $H$ as a
subnormal subgroup of $\Gamma_n$ is either equal to or one less
than its defect as a subgroup of $G_n$.
\end{remark}

${\tt NormalClosure}(H)$ returns the normal closure of $H$ in
$G_n$ according to Lemma~\ref{Sandwich}. ${\tt IsNormal}$ tests
whether $H \unlhd G_n$, returning ${\tt true}$ if and only if
$l_2=l_1$ (Corollary~\ref{CorNormalityCriterion}).

By Proposition~\ref{NormalinSLiffNormalinGL}, ${\tt
NormalClosure}$ also returns the normal closure in $\Gamma_n$ of
$H\leq \Gamma_n$, and ${\tt IsNormal}$ tests whether $H
\unlhd\Gamma_n$.

We can list the subnormal subgroups of $G_n$ in $H$.

\vspace*{10pt}

${\tt NormalSubgroups}(H,l)$

\vspace*{1mm}

Input: $H\leq G_n$ and a positive integer $l$.

Output: all normal subgroups of $G_n$ in $H$ of level $l$.

\vspace*{.25mm}

\begin{enumerate}
\item $r := {\tt Level}({\tt MaxPCS}(H))$.

\abk

\item If  $r$ does not divide $l$ then return $\emptyset$.

\abk

\item $\mathcal{L}:=$ a list of all subgroups of $\varphi_l(H)
\cap \varphi_l(Z_{n,l})$.

\abk

\item Return the full preimage of $\mathcal{L}$ in $H$ under
$\varphi_l$.
\end{enumerate}

\vspace*{10pt}

We sketch a more general method. Let $\mathcal{L}_{a,b}$ be
the list of all $K$ such that $\Gamma_{n,b}\leq \abk K\leq \abk
H\cap \abk Z_{n,a}$. Define $\mathcal{L} = \abk
\bigcup_k\mathcal{L}_{k,k^{t}}$ where $k$ ranges over the
multiples of $\ell(H)$ dividing $m$, and $t=t(k)$ is maximal
subject to $r \hspace*{.2mm} \big| \hspace*{.2mm} k^{t}$. Then
$\mathcal L$ is a complete list of subnormal subgroups of $G_n$ in
$H$. By Lemma~\ref{NilpotentQuotient}, $\mathcal{L}_{k,k^t}$
consists of preimages of subgroups of the nilpotent group
$\varphi_{k^t}(Z_{n,k})$. Redundancies in $\mathcal{L}$ are
removed using $\mathcal{L}_{k_1,k_1^{t_1}}\cap\abk
\mathcal{L}_{k_2,k_2^{t_2}}=
\mathcal{L}_{\mathrm{lcm}(k_1,k_2),\mathrm{gcd}(k_1,k_2)^t}$ where
$t  = \mathrm{min}(t_1,t_2)$, by Lemma~\ref{GCD}.

\section{Computing with arithmetic groups in $\slnz$}
\label{AlgorithmsArithmetica}

\subsection{Decidability}

An arithmetic subgroup $H$ of an algebraic $\Q$-group $G\leq
\GL(n,\mathbb{C})$ is `explicitly given' if  (i)~an upper bound on
$|G_\Z : H|$ is known, and  (ii)~membership testing in $H$ is
possible; i.e., for any $g\in G_\Z$ it can be decided whether
$g\in H$~\cite[pp.~531--532]{GSI}. Conditions (i) and (ii) were
assumed in \cite{GSI} to prove decidability of algorithmic
problems for $H$. As the next lemma shows, these conditions are
equivalent to knowing a PCS in $H$. Such a PCS can always be
found: see Corollary~\ref{FindingPCSIsDecidable}.
\begin{lemma}\label{GSExpGEquiv}
Let $H\fis \Gamma_n$. The following are equivalent.
\begin{itemize}
\item[{\rm (i)}] A positive integer $m$ such that
$\Gamma_{n,m}\leq H$ is known. \item[{\rm (ii)}] An upper bound on
$|\Gamma_n:H|$ is known, and testing membership of $x\in \Gamma_n$
in $H$ is decidable.
\end{itemize}
\end{lemma}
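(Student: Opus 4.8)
The plan is to establish the equivalence (i) $\Leftrightarrow$ (ii) by exhibiting explicit reductions in both directions, using the finite-index bound on principal congruence subgroups together with the decidability results already in place.

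For the direction (i) $\Rightarrow$ (ii): suppose $\Gamma_{n,m}\leq H$. Since $\Gamma_{n,m}$ has finite index in $\Gamma_n$ (it is the kernel of $\varphi_m:\Gamma_n\to\SL(n,\Z_m)$, and $|\SL(n,\Z_m)|$ is an explicit finite number by Corollary~\ref{PCSSLOrder}), we immediately get the upper bound $|\Gamma_n:H|\leq|\Gamma_n:\Gamma_{n,m}|\leq|\SL(n,\Z_m)|$. For membership testing of $x\in\Gamma_n$ in $H$: because $\Gamma_{n,m}\unlhd\Gamma_n$, we have $x\in H$ if and only if $\varphi_m(x)\in\varphi_m(H)$. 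The group $\varphi_m(H)$ is a finite subgroup of $\SL(n,\Z_m)$ generated by the (computable) images of the generators of $H$, so membership of $\varphi_m(x)$ in $\varphi_m(H)$ is decidable by finite computation in $\SL(n,\Z_m)$ --- here one can invoke the machinery of Section~\ref{AlgorithmsforgroupsoverZm} for computing in $\GL(n,\Z_m)$.

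For the direction (ii) $\Rightarrow$ (i): suppose we know a bound $|\Gamma_n:H|\leq N$ and can test membership in $H$. First, run the Todd--Coxeter procedure with the finite presentation of $\Gamma_n$ recorded just before Lemma~\ref{CanFindIndexOfFIHinGamma} and the known generators of $H$ (each expressed as a word in the transvections $t_{ij}$, as in the proof of Lemma~\ref{CanFindIndexOfFIHinGamma}); since $|\Gamma_n:H|$ is finite and bounded by $N$, this terminates and returns the exact index $m_0:=|\Gamma_n:H|$. Alternatively, and more simply, one applies Lemma~\ref{CfLS} directly: with $l=\mathrm{lcm}\{1,\dots,m_0\}$ (or even $l=\mathrm{lcm}\{1,\dots,N\}$, using just the bound without computing the exact index) we obtain $\Gamma_{n,l^2}\leq H$, so $m:=l^2$ is a positive integer witnessing (i). Thus (i) holds, and in fact we have produced it constructively.

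I expect the main obstacle to be the (ii) $\Rightarrow$ (i) direction --- specifically, justifying that the finite-index bound alone suffices to pin down a concrete $m$. The cleanest route is to lean entirely on Lemma~\ref{CfLS}, which already packages Proposition~\ref{Excellent} and the Todd--Coxeter argument: once we know $|\Gamma_n:H|\leq N$ we may simply take $l=\mathrm{lcm}\{1,\dots,N\}$ and conclude $\Gamma_{n,l^2}\leq H$ without any further work, so membership testing is not even needed for this implication. (Membership testing is what makes the resulting $m$ \emph{usable} --- e.g.\ for then deciding $\Gamma_{n,m'}\leq H$ for smaller $m'$ and thereby finding the \emph{maximal} PCS, cf.\ Corollary~\ref{MaxPrincipalCS} --- but for the bare equivalence it is the finite-index hypothesis that does the lifting.) I would present (ii) $\Rightarrow$ (i) via Lemma~\ref{CfLS} and remark that membership testing makes the bound effective, then dispatch (i) $\Rightarrow$ (ii) by the normal-closure/congruence-image argument above.
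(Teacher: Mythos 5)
Your (i)\,$\Rightarrow$\,(ii) direction is precisely the paper's argument. For (ii)\,$\Rightarrow$\,(i) you take a genuinely different and in some ways cleaner route. The paper actively uses the membership oracle: for each $g\in\Sigma$ (as in Proposition~\ref{VenkaSury}) and each pair $i,j$, it tests $(t_{ij}^g)^1,(t_{ij}^g)^2,\dots$ for membership in $H$; the index bound $r$ guarantees by pigeonhole that some $r_{g,i,j}\leq r$ succeeds, and a common multiple $m$ of the finitely many $r_{g,i,j}$ then satisfies $\Gamma_{n,m}\leq H$ by Proposition~\ref{VenkaSury}. Your route instead invokes Lemma~\ref{CfLS} directly: the bound $N$ alone gives $\Gamma_{n,l^2}\leq H$ with $l=\mathrm{lcm}\{1,\dots,N\}$, no membership testing required. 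Both are correct. Your approach makes the striking observation that the membership hypothesis in (ii) is in fact logically unnecessary for this implication (you correctly flag that it becomes relevant only when one wants to refine the witness, e.g.\ toward the maximal PCS of Corollary~\ref{MaxPrincipalCS}). What the paper's argument buys in exchange is a potentially much smaller $m$: the $r_{g,i,j}$ found by probing divide $\mathrm{lcm}\{1,\dots,r\}=l$, so the paper's $m$ divides $l$, whereas Lemma~\ref{CfLS} gives $l^2$.

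One caveat: your first sketch of (ii)\,$\Rightarrow$\,(i)---running Todd--Coxeter with ``the known generators of $H$''---quietly assumes that a finite generating set for $H$ is available. Hypothesis (ii) supplies only an index bound and a membership oracle; in Grunewald--Segal's notion of ``explicitly given'' (which this lemma is characterizing) no generating set for $H$ is presumed. You should drop that route entirely and lead with the Lemma~\ref{CfLS} argument, which you in any case identify as the cleaner one.
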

\begin{proof}
(i) $\Rightarrow$ (ii). $|\Gamma_n:H|\leq |\SL(n,\Z_m)|$, and
$x\in H$ if and only if $\varphi_m(x)\in \varphi_m(H)$.

(ii) $\Rightarrow$ (i). Suppose that $|\Gamma_n:H|\leq r$. For
$g\in\abk \Sigma$ as in Proposition~\ref{VenkaSury} and each pair
$i,j$, after no more than $r$ rounds we are guaranteed to find
positive integers $r_{g,i,j}\leq r$ such that $t_{ij}(r_{g,i,j})^g
= \abk (t_{ij}^g)^{r_{g,i,j}} \in \abk H$. Thus, if $m$ is any
common multiple of the $r_{g,i,j}$ then $\Gamma_{n,m} \leq H$.
\end{proof}

\begin{proposition}\label{WhoKnew}
If $H$ is a finite index subgroup of $\, \Gamma_n$ specified by a
finite generating set then testing membership of any $g\in
\Gamma_n$ in $H$ is decidable.
\end{proposition}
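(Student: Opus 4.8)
The plan is to reduce membership testing to running the Todd--Coxeter coset enumeration procedure, exactly as in the proof of Lemma~\ref{CanFindIndexOfFIHinGamma}, but now interpreted as a semi-decision procedure whose termination is guaranteed precisely because $H$ has finite index. First I would recall that $\Gamma_n$ has the finite presentation $\langle t_{ij} \st \mathcal{R}\rangle$ described after Proposition~\ref{Excellent} (Milnor's presentation). Given the finite generating set of $H$, I would rewrite each generator of $H$ as an explicit word in the transvections $t_{ij}$ using a standard row-reduction over $\Z$ (e.g.\ \cite[p.~99]{Humphreys}), so that $H$ is presented to the enumerator as a finitely generated subgroup of a finitely presented group.

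The key point is that coset enumeration of $H$ in $\langle t_{ij}\st\mathcal{R}\rangle$ is guaranteed to terminate because $|\Gamma_n:H|<\infty$; upon termination it returns the index $|\Gamma_n:H|=:r$ together with a coset table, i.e.\ a transitive permutation representation $\rho\colon \Gamma_n\to \mathrm{Sym}(r)$ on the right cosets of $H$, in which $H$ is recognizable as the stabilizer of the trivial coset. Then, to test whether a given $g\in\Gamma_n$ lies in $H$, I would again express $g$ as a word in the $t_{ij}$ (same reduction as above), evaluate that word under $\rho$ to obtain the permutation $\rho(g)$, and check whether $\rho(g)$ fixes the point corresponding to the coset $H$. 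This is a finite computation, and $g\in H$ if and only if the fixed-point condition holds. Hence membership in $H$ is decidable.

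The main obstacle to address is the justification that Todd--Coxeter actually halts here: in general coset enumeration is only a semi-algorithm, but it is a classical fact that it terminates whenever the subgroup has finite index (and the ambient group is finitely presented with the subgroup finitely generated). Since $H\fis\Gamma_n$ by hypothesis and $\Gamma_n$ is finitely presented, this condition is met; it is worth remarking that the algorithm does not need to be told the index $r$ in advance. A secondary routine point is the constructive expression of arbitrary elements of $\Gamma_n=E_n$ as words in the $t_{ij}$, which is exactly the effective form of Proposition~\ref{GenRingElvsGamma} used in Lemma~\ref{CanFindIndexOfFIHinGamma}; I would simply cite that reduction rather than reproduce it. One may also note in passing that this yields a uniform bound $|\Gamma_n:H|\le|\SL(n,\Z_m)|$ once a PCS $\Gamma_{n,m}\le H$ is known, tying the result to Lemma~\ref{GSExpGEquiv}, though that refinement is not needed for decidability per se.
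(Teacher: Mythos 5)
Your proof is correct, and it rests on the same two foundational ingredients as the paper's argument (Milnor's finite presentation of $\Gamma_n$ and the effective decomposition of elements into transvections, both feeding into a Todd--Coxeter run). But the way you close the argument is genuinely different. The paper proves this proposition in one line by citing Corollary~\ref{FindingPCSIsDecidable} and Lemma~\ref{GSExpGEquiv}: first construct a principal congruence subgroup $\Gamma_{n,m}\leq H$ (Todd--Coxeter is hidden inside Lemma~\ref{CanFindIndexOfFIHinGamma}, which together with Proposition~\ref{Excellent} yields the PCS), and then observe that $g\in H$ iff $\varphi_m(g)\in\varphi_m(H)$, a finite check in $\SL(n,\Z_m)$. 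You instead use the coset table produced by the terminating Todd--Coxeter run directly: express $g$ as a word in the $t_{ij}$, evaluate it in the permutation representation on cosets of $H$, and test whether the base point is fixed. Both are valid. Your route is the more elementary and self-contained decidability argument, dispensing with the PCS machinery; the paper's route is the one that aligns with its practical framework, since everything downstream in the paper is carried out by reduction modulo $m$ once a PCS level is known, so proving decidability via Lemma~\ref{GSExpGEquiv} costs nothing extra there. It is worth noting that your closing remark about the bound $|\Gamma_n:H|\le|\SL(n,\Z_m)|$ is essentially the bridge to the paper's actual proof: if you had continued down that path you would have reconstructed the cited argument.
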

\begin{proof}
This follows from Corollary~\ref{FindingPCSIsDecidable} and
Lemma~\ref{GSExpGEquiv}.
\end{proof}

Of course, a key problem is (AT), arithmeticity testing: if $H$ is a
finitely generated subgroup of $\Gamma_n$, determine whether
$|\Gamma_n:H|$ is finite.
We are unaware of any proof that (AT) is
decidable---although it seems not to be \cite{Miller}.
Nonetheless, (AT) is decidable when $G$ is solvable
\cite{DeGrDetF}.

\subsection{Algorithms for arithmetic groups}
\label{ZProcedures}

Now we design algorithms for $H\leq \Gamma_n = \abk
\slnz$, $n\geq 3$, given by a finite generating set.

By Corollary~\ref{FindingPCSIsDecidable} (and the proof of
Lemma~\ref{CanFindIndexOfFIHinGamma}), we have a procedure ${\tt
LevelPCS}(H)$ that returns the level of a PCS in 
$H$. It depends
on representing elements of $\Gamma_n$ as products of
transvections. Once we know ${\tt LevelPCS}(H)=m$, say, then ${\tt
GeneratorsPCS}(m)$ returns a generating set for $\Gamma_{n,m}$ as
in Proposition~\ref{VenkaSury}.

Let $\overline{H}=\abk \varphi_m(H) \leq\abk \overline{\Gamma}_n =
\SL(n,\Z_m)$. Lemma~\ref{MaxPCSEasy} underpins the following,
which finds the maximal PCS $\Gamma_{n,r}$ in $H$. (To improve
efficiency we could substitute $r$ for $m$ in algorithms of this
section.)

\vspace*{10pt}

${\tt MaxPCS}(H,m)$

\vspace*{1mm}

Input: $H\leq \Gamma_n$ such that $\Gamma_{n,m}\leq H$.

Output: a generating set for the maximal PCS in $H$.

\vspace*{.25mm}

\begin{enumerate}
\item $r:= {\tt Level}({\tt MaxPCS}(\overline{H}))$.

\abk

\item Return ${\tt GeneratorsPCS}(r)$.
\end{enumerate}


\noindent Remember that the level of a finitely generated subgroup
of $\Gamma_n$ is calculated straightforwardly by
Lemma~\ref{LevelViaGenerators}. ${\tt IsSpecialLinear}(H,m)$
returns ${\tt true}$ if ${\tt MaxPCS}(H,m)$ has level $1$ and
${\tt false}$ otherwise.

We mention a few more sample procedures.

\vspace{7.5pt}

${\tt Index}(H,\Gamma,m)$ returns $|\Gamma_n:H| =
|\overline{\Gamma}_n:\overline{H}|$.

\vspace{7.5pt}

${\tt IsSubgroup}(H,L,m)$ tests whether a finitely generated
subgroup $L$ of $\Gamma_n$ is contained in $H$, returning ${\tt
true}$ if and only if $\overline{L}\leq \overline{H}$.

\vspace{7.5pt}

${\tt Intersect}(H_1, H_2,m)$. Suppose that $\Gamma_{m_i}\leq
H_i\leq \Gamma_n$, $i=1$, $2$. Let $l=\mathrm{lcm}(m_1,m_2)$. This
procedure returns $H_1\cap H_2$, which by Lemma~\ref{GCD} (ii) is
the full preimage in $\Gamma_n$ under $\varphi_l$ of
$\varphi_l(H_1)\cap \abk \varphi_{l}(H_2)$.

\vspace{7.5pt}

${\tt IsSubnormal}(H,m)$ returns ${\tt true}$ and a bound on the
defect of $H$ if $H \, \mathrm{sn}\, \Gamma_n$; otherwise it
returns ${\tt false}$. The steps mimic those of ${\tt
IsSubnormal}(H)$ from Subsection~\ref{SubNStructureZm}, but are
now carried out over $\Z$. The same comment applies to normality
testing of $H$.

\vspace{7.5pt}

${\tt NormalClosure}(H)$: as before, immediate from
Lemma~\ref{Sandwich}. We need not know a PCS in $H$.

\vspace{7.5pt}

${\tt Normalizer}(H,m)$ returns $N_{\Gamma_n}(H)$, the full
preimage in $\Gamma_n$ of $N_{\overline{\Gamma}_n}(\overline{H})$.
Note that $C_{\Gamma_n}(H)$ is either trivial if $n$ is odd or
$\langle -1_n\rangle$ if $n$ is even, because $H$ is absolutely
irreducible over $\Q$.

\vspace{7.5pt}

${\tt NormalSubgroups}(H,m)$ returns all normal subgroups of
$\Gamma_n$ in $H$ containing $\Gamma_{n,m}$: this is the full
preimage of the list $\bigcup_l \hspace*{.25mm}  {\tt
NormalSubgroups}(\overline{H},l)$ as $l$ ranges over the divisors
of $m$. All subnormal subgroups of $\Gamma_n$ in $H$ containing
$\Gamma_{n,m}$ are extracted similarly from the corresponding list
in $\overline{\Gamma}_n$.

\section{The orbit-stabilizer problem}
\label{OSProblem}

Let $R$ be a commutative ring with $1$, and let $H = \gpess \leq
\GL(n,R)$. This section addresses the \emph{orbit-stabilizer
problem}: for arbitrary $u$, $v \in R^n$,
\begin{itemize}
\item[(I)] decide whether there is $g \in H$ such that $gu=v$, and
find a $g$ if it exists; \item[(II)] determine $\mathrm{Stab}_H(u)
= \{ g \in H \st gu = \abk u\}$.
\end{itemize}
The element $g$ and a generating set for $\mathrm{Stab}_H(u)$
should be written as words over $S \cup S^{-1}$. We solve (I) and
(II) for $R = \abk \Q$ and $H\fis \abk \Gamma_n=\abk \SL(n,\Z)$.
Along the way, partial results for subgroups of
$\overline{\Gamma}_n=\SL(n,\Z_m)$ are proved as well.

\subsection{Preliminaries}

Suppose that $\Gamma_{n,m}\leq H\leq \Gamma_n$. We denote images
under $\varphi_m$ by overlining.
\begin{lemma}\label{ModOE31}
Let $u, v\in \Z^n$, and let $K$ be the full preimage of
$\mathrm{Stab}_{\overline{H}}(\overline{u})$ in $H$. Then
\begin{itemize}
\item[{\rm (i)}] $v\in Hu$ if and only if $\overline v \in
\overline H \overline u$ and $hv \in Ku$ for any $h\in H$ such
that $\overline{h} \overline{v}=\overline{u}$. \item[{\rm (ii)}]
$\mathrm{Stab}_H(u) = \mathrm{Stab}_K(u)$.
\end{itemize}
\end{lemma}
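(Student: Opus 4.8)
The plan is to prove Lemma~\ref{ModOE31} by a routine ``quotient then kernel'' argument, exploiting the fact that $K$ is precisely the preimage of the stabilizer of $\overline u$ and that $\Gamma_{n,m} \leq K$ acts trivially on $\Z^n$ modulo $m$, though not on $\Z^n$ itself.

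\begin{proof}
(i) Suppose $v \in Hu$, say $v = gu$ with $g \in H$. Applying $\varphi_m$ gives $\overline v = \overline g\, \overline u$, so $\overline v \in \overline H\, \overline u$; in particular, for any $h\in H$ with $\overline h\,\overline v = \overline u$ we have $\overline{hg}\,\overline u = \overline u$, i.e.\ $hg \in K$, and then $hv = hgu \in Ku$. Conversely, assume $\overline v \in \overline H\,\overline u$ and pick $h \in H$ with $\overline h\,\overline v = \overline u$; the hypothesis gives $hv \in Ku$, say $hv = ku$ for some $k \in K$. Then $k^{-1}hv = u$, and $k^{-1}h \in H$, so $v \in H^{-1}u = Hu$ (as $H$ is a group). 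Note that the choice of $h$ in the statement is immaterial: if $\overline h_1\,\overline v = \overline h_2\,\overline v = \overline u$ then $h_1 h_2^{-1} \in K$, so $h_1 v \in Ku \Leftrightarrow h_2 v \in Ku$.

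(ii) The inclusion $\mathrm{Stab}_H(u) \subseteq \mathrm{Stab}_K(u)$ is the point that needs the definition of $K$: if $g \in H$ fixes $u \in \Z^n$, then $\overline g$ fixes $\overline u$, so $g$ lies in the full preimage $K$ of $\mathrm{Stab}_{\overline H}(\overline u)$; hence $g \in \mathrm{Stab}_K(u)$. The reverse inclusion $\mathrm{Stab}_K(u) \subseteq \mathrm{Stab}_H(u)$ is immediate from $K \leq H$.
\end{proof}

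The proof is entirely formal; there is no real obstacle, only the bookkeeping observation in~(i) that $H$ being closed under inverses lets one pass from $k^{-1}h v = u$ back to $v \in Hu$, together with the remark that the auxiliary $h$ may be chosen arbitrarily among preimages of elements carrying $\overline v$ to $\overline u$. The substance of this lemma is not in its proof but in how it will later be used: it reduces orbit and stabilizer computations in $H$ acting on $\Z^n$ (or $\Q^n$) to a finite computation in $\overline H \leq \SL(n,\Z_m)$ followed by an orbit/stabilizer computation under the group $K$, whose structure is controlled by the principal congruence subgroup $\Gamma_{n,m} \leq K$. I would therefore keep the proof as short as above and devote the surrounding text to setting up the description of $\Gamma_{n,m}$-orbits and stabilizers on $\Z^n$ that the main orbit--stabilizer algorithm relies on.
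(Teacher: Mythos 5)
Your proof is correct; the paper states Lemma~\ref{ModOE31} without any proof, evidently regarding it as routine, and your argument (reduce modulo $m$, then untwist by the element $k^{-1}h\in H$) is exactly the expected one. The remark that the auxiliary $h$ may be chosen arbitrarily, because two admissible choices differ by an element of $K$, is a useful check that the ``for any $h$'' phrasing in the statement is well-posed.
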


\begin{proposition}\label{OSGammaViaOSGammam}
If we can solve the orbit-stabilizer problem for $\Gamma_{n,m}$
(acting on $\Z^n$), then we can solve it for $H$.
\end{proposition}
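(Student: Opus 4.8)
The plan is to reduce the orbit-stabilizer problem for $H$ to that for $\Gamma_{n,m}$ by exploiting Lemma~\ref{ModOE31}. The group $H$ sits between $\Gamma_{n,m}$ and $\Gamma_n$, so $\overline{H}=\varphi_m(H)$ is a \emph{finite} group inside $\overline{\Gamma}_n=\SL(n,\Z_m)$, for which we have a working computational model from Section~\ref{AlgorithmsforgroupsoverZm}. Given $u,v\in\Q^n$, first clear denominators: replacing $u$ by a scalar multiple changes neither the orbit question nor the stabilizer, so we may assume $u,v\in\Z^n$ (and if the denominators of $u$ and $v$ differ after clearing, no element of $\SL(n,\Z)$ can map one to the other, so we return \texttt{false}). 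Now compute the finite orbit $\overline{H}\,\overline{u}$ and the stabilizer $\mathrm{Stab}_{\overline H}(\overline u)$ by ordinary finite orbit-stabilizer computation in $\GL(n,\Z_m)$, keeping track of words in the images of the generators of $H$.

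Next I would handle the two tasks separately using the lemma. For task~(I): test whether $\overline v\in\overline H\,\overline u$; if not, return \texttt{false}. If so, the orbit computation hands us a word giving $h\in H$ with $\overline h\,\overline v=\overline u$, hence $hv\equiv u\pmod m$, i.e. $hv-u\in m\Z^n$. By Lemma~\ref{ModOE31}~(i), $v\in Hu$ iff $hv\in Ku$ where $K$ is the preimage in $H$ of $\mathrm{Stab}_{\overline H}(\overline u)$. I would obtain a finite generating set for $K$ by lifting coset representatives of $\mathrm{Stab}_{\overline H}(\overline u)$ in $\overline H$ to elements of $H$ (via the recorded words) together with the known generators of $\Gamma_{n,m}$ from Proposition~\ref{VenkaSury}; this is a finite-index subgroup of $H$ containing $\Gamma_{n,m}$, and crucially every element of $K$ fixes $\overline u$, so $K$ acts on the affine set $u+m\Z^n$. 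Translating by $u$, the $K$-action on $u+m\Z^n\cong\Z^n$ (via $x\mapsto(x-u)/m$) is an action of $K$ that restricts on $\Gamma_{n,m}$ to essentially the $\Gamma_{n,m}$-action we are assumed able to solve; so the question ``is $hv\in Ku$'' reduces, after finitely many coset-representative shifts, to membership of a point in a $\Gamma_{n,m}$-orbit, which is decidable by hypothesis. Composing the word for $h$ with the witness word from the $\Gamma_{n,m}$-computation yields the required $g\in H$ written over $S\cup S^{-1}$.

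For task~(II): by Lemma~\ref{ModOE31}~(ii), $\mathrm{Stab}_H(u)=\mathrm{Stab}_K(u)$, so it suffices to compute the stabilizer of $u$ inside $K$. Again using that $K$ acts on $u+m\Z^n$ and that $\Gamma_{n,m}\le K$ with known finite index, I would run an orbit-stabilizer computation for the finite action of $K$ on the finitely many $\Gamma_{n,m}$-orbits inside the relevant region: concretely, enumerate the $K/\Gamma_{n,m}$-cosets, for each coset representative $k$ solve the $\Gamma_{n,m}$ orbit-stabilizer problem for $ku$ versus $u$ (possible by hypothesis), and assemble $\mathrm{Stab}_K(u)$ from $\mathrm{Stab}_{\Gamma_{n,m}}(u)$ together with the finitely many ``coset-fixing'' elements, by the standard Schreier-type argument for a subgroup of finite index acting with known orbit data. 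All generators come out as words over the generators of $H$, hence over $S\cup S^{-1}$.

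The main obstacle is the reduction of the $K$-problem to the $\Gamma_{n,m}$-problem cleanly: one must check that the $K$-action on $u+m\Z^n$ genuinely restricts to (a relabelling of) the $\Gamma_{n,m}$-action on $\Z^n$ for which solvability is assumed, and that finite index of $\Gamma_{n,m}$ in $K$ together with Lemma~\ref{ModOE31} suffices to transfer both the orbit-membership test and a \emph{generating set} for the stabilizer (not just its orbit), via a Schreier-generator argument. Everything else—clearing denominators, the finite $\overline H$-computation, lifting words—is routine given Section~\ref{AlgorithmsforgroupsoverZm} and Proposition~\ref{VenkaSury}.
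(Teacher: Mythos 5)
Your overall route matches the paper's: apply Lemma~\ref{ModOE31} to reduce to the subgroup $K$ (the preimage of $\mathrm{Stab}_{\overline H}(\overline u)$ in $H$), exploit that $\Gamma_{n,m}\unlhd K$ with finite index so that $K$ permutes $\Gamma_{n,m}$-orbits, and build $\mathrm{Stab}_H(u)=\mathrm{Stab}_K(u)$ from $\mathrm{Stab}_{\Gamma_{n,m}}(u)$ together with Schreier-type generators corrected by elements of $\Gamma_{n,m}$ found via the assumed $\Gamma_{n,m}$-orbit solver.

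However, the claim you single out as ``the main obstacle'' to verify is false. Writing $g=1_n+mX\in\Gamma_{n,m}$ and $x=u+mz\in u+m\Z^n$, one computes $gx=u+m\big(Xu+gz\big)$, so in the coordinate $z=(x-u)/m$ the action of $g$ is $z\mapsto gz+Xu$: affine with a $g$-dependent translation $Xu=(gu-u)/m$. This is \emph{not} a relabelling of the linear $\Gamma_{n,m}$-action on $\Z^n$; the two agree only on $\mathrm{Stab}_{\Gamma_{n,m}}(u)$. Fortunately you do not need that translation at all. Since $K=\bigcup_k\Gamma_{n,m}k$ is a finite union of cosets, $Ku=\bigcup_k\Gamma_{n,m}(ku)$, and the test ``$hv\in Ku$'' is directly a finite list of instances of the ordinary $\Gamma_{n,m}$-orbit problem on $\Z^n$ (compare $hv$ with each $ku$), which is decidable by hypothesis. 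That direct reduction, together with the Schreier-generator assembly you describe for the stabilizer, is essentially the paper's proof, merely indexed by cosets of $\Gamma_{n,m}$ in $K$ rather than by representatives of the (generally shorter) $K$-orbit of the block $\Gamma_{n,m}u$.
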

\begin{proof}
(Cf.~\cite[p.~255]{DixonOrbitStabilizer} and
\cite[Lemma~3.1]{EickOstheimer}.) First, note that $K$ permutes
the $\Gamma_{n,m}$-orbits in $\Z^n$. Let $\{y_1, \abk \ldots, \abk
y_k\}$ be a set of representatives for the $K$-orbit of
$\Gamma_{n,m}u$. In the notation of Lemma~\ref{ModOE31},
\[
v\in Hu \Leftrightarrow hv \in Ku 
\Leftrightarrow hv , y_iu \, \text{ are in the same }
\Gamma_{n,m}\text{-orbit for some } i.
\]
Secondly, we can find (Schreier) generators $h_1, \ldots , h_s$ of
$\mathrm{Stab}_K(\Gamma_{n,m}u)$; and find $g_i \in
\Gamma_{n,m}$ such that $g_iu= h_iu$, $1\leq i\leq s$. Then
\[
\mathrm{Stab}_H(u)=\mathrm{Stab}_K(u) = \langle g_1^{-1}h_1,
\ldots , g_s^{-1}h_s, \mathrm{Stab}_{\Gamma_{n,m}}(u)\rangle .
\qedhere
\]
\end{proof}
As suggested by Proposition~\ref{OSGammaViaOSGammam}, we 
first aim to solve the orbit-stabilizer problem for a PCS in
$\Gamma_n$.

Let $u= (u_1, \ldots, u_n)^\top \in R^n$, and let $\langle u
\rangle$ denote the ideal of $R$ generated by the $u_i$.
\begin{lemma}
\label{PreObs} $\langle xu\rangle = \langle u \rangle$ for any
$x\in \GL(n,R)$; thus, if $u$ and
 $v$ are in the same $\GL(n,R)$-orbit then $\langle  u \rangle =
\langle v \rangle$.
\end{lemma}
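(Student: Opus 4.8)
\textbf{Proof plan for Lemma~\ref{PreObs}.}
The statement has two parts, the second being an immediate consequence of the first, so the plan is to prove $\langle xu\rangle = \langle u\rangle$ for all $x\in\GL(n,R)$ and then derive the orbit statement. First I would establish the inclusion $\langle xu\rangle \subseteq \langle u\rangle$: writing $x=(x_{ij})$, the $i$-th coordinate of $xu$ is $\sum_j x_{ij}u_j$, which lies in the ideal generated by $u_1,\dots,u_n$; since $\langle xu\rangle$ is the smallest ideal containing all these coordinates, it is contained in $\langle u\rangle$. For the reverse inclusion, the cleanest route is to apply what we have just shown to $x^{-1}$ in place of $x$ and to $xu$ in place of $u$: this gives $\langle x^{-1}(xu)\rangle\subseteq\langle xu\rangle$, i.e. $\langle u\rangle\subseteq\langle xu\rangle$. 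Combining the two inclusions yields $\langle xu\rangle=\langle u\rangle$. This argument uses only that $x$ is invertible over $R$, which is exactly the hypothesis $x\in\GL(n,R)$.

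For the second assertion, suppose $u$ and $v$ lie in the same $\GL(n,R)$-orbit, so $v=xu$ for some $x\in\GL(n,R)$. Then $\langle v\rangle=\langle xu\rangle=\langle u\rangle$ by the first part, and we are done.

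There is no real obstacle here; the only point requiring a moment's care is the reverse inclusion, where one must resist the temptation to argue directly and instead invoke the already-proved inclusion with the inverse matrix. I would present the whole thing in a few lines.
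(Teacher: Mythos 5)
Your proposal is correct. The paper gives no explicit proof of Lemma~\ref{PreObs}, treating it as immediate, and your argument is the natural one: the forward inclusion $\langle xu\rangle\subseteq\langle u\rangle$ because each coordinate of $xu$ is an $R$-linear combination of the coordinates of $u$, and the reverse inclusion by applying this to $x^{-1}$ and $xu$.
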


A vector $u\in R^n$ such that $\langle u \rangle = R$ is said to
be \emph{unimodular}. By Lemma~\ref{PreObs}, $\GL(n,R)$ permutes
the unimodular vectors among themselves.

\subsection{$\overline{\Gamma}_n$-orbits in $\Z_m^n$}

Suppose that $m$ has prime factorization $p_1^{e_1}\cdots
p_s^{e_s}$, and write each $a\in \Z_m$ as $(a_1, \ldots, \abk
a_s)$, $a_i\in \Z_{p_i^{e_i}}$.
\begin{lemma}\label{HumphL1}
If $(u_1, \ldots , u_n)^\top \in \Z_m^n$ is unimodular then $u_1+
\sum_{i=2}^n b_iu_i$ is a unit of $\Z_m$ for some $b_2, \ldots ,
b_n \in \Z_m$.
\end{lemma}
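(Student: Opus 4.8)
The plan is to reduce the problem to the individual prime-power factors via the Chinese Remainder Theorem and then handle each local factor by a direct, elementary argument. Writing $u = (u_1, \ldots, u_n)^\top$ and using the componentwise decomposition $\Z_m \cong \bigoplus_{i=1}^s \Z_{p_i^{e_i}}$, the element $u_1 + \sum_{j=2}^n b_j u_j$ is a unit of $\Z_m$ if and only if its image in each $\Z_{p_i^{e_i}}$ is a unit, i.e., is nonzero modulo $p_i$. So it suffices to find, for each $i$ separately, elements $b_2^{(i)}, \ldots, b_n^{(i)} \in \Z_{p_i^{e_i}}$ with $u_1^{(i)} + \sum_{j=2}^n b_j^{(i)} u_j^{(i)} \not\equiv 0 \pmod{p_i}$; one then assembles the global $b_j$ from the local $b_j^{(i)}$ by CRT.

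For a fixed prime power $q = p^e$, the key observation is that unimodularity of $u$ means $\langle u_1, \ldots, u_n \rangle = \Z_q$, which since $\Z_q$ is local is equivalent to: not all $u_j$ lie in the maximal ideal $p\Z_q$, i.e., $u_j \not\equiv 0 \pmod p$ for at least one index $j$. If $u_1 \not\equiv 0 \pmod p$ already, take all $b_j = 0$. Otherwise pick any index $j_0 \geq 2$ with $u_{j_0} \not\equiv 0 \pmod p$, set $b_{j_0} = 1$ and all other $b_j = 0$; then $u_1 + u_{j_0} \equiv u_{j_0} \not\equiv 0 \pmod p$ since $u_1 \equiv 0 \pmod p$. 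Either way we obtain a unit.

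The argument is entirely routine; there is essentially no obstacle, only bookkeeping between the global ring and its local factors. The one point to state carefully is the reduction step: that a residue $a \in \Z_m$ is a unit precisely when each component $a_i \in \Z_{p_i^{e_i}}$ is a unit, and that each $a_i$ is a unit precisely when $a_i \not\equiv 0 \pmod{p_i}$ (valid because $\Z_{p_i^{e_i}}$ is a finite local ring with maximal ideal $p_i\Z_{p_i^{e_i}}$). With these two standard facts in hand the proof is immediate, and it also makes clear that the $b_j$ can be chosen to be $0$ or $1$ in each component. (This lemma is the modular analogue of the classical fact used for $\Z$; cf.~\cite[p.~99]{Humphreys}.)
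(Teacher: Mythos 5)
Your proof is correct and follows essentially the same route as the paper's: reduce to the prime-power components $\Z_{p_i^{e_i}}$ via CRT, and in each local factor pick an index whose coordinate is a unit (the paper's algorithm picks the \emph{least} such index, which silently covers both your cases $u_1 \not\equiv 0$ and $u_1 \equiv 0 \pmod{p_i}$ in one stroke). The paper phrases the unit test as $p_j^{e_j-1} u_{kj} \not\equiv 0 \pmod{p_j^{e_j}}$, which is just your condition $u_{kj} \not\equiv 0 \pmod{p_j}$ in disguise.
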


Lemma~\ref{HumphL1} is proved in \cite[p.~104]{Humphreys}. We
summarize the proof as follows.

\vspace*{10pt}

${\tt Auxiliary1} \hspace{1pt} (u)$

\vspace*{1mm}

Input: unimodular $u= (u_1, \ldots, u_n)^\top\in \Z_m^n$.

Output: $b_2, \ldots , b_n$ as in Lemma~\ref{HumphL1}.

\vspace*{.25mm}

\begin{enumerate}

\item For $j = 1,\ldots , s$ do

let $k$ be the least index such that $p_j^{e_j-1} u_{kj} \not
\equiv 0 \mod p_j^{e_j}$;

$b_{kj}:=1$ and $b_{ij}:=0$ for $i\neq k$. \item Return $b_2 :=
(b_{21}, b_{22}, \ldots , b_{2s}), \, \ldots , \, b_n := (b_{n1},
b_{n2}, \ldots , b_{ns})$.

\end{enumerate}

\vspace*{7.5pt}

\begin{lemma}\label{SuprSimi}
If $u\in \Z_m^n$ is unimodular then $gu= (1,0,\ldots, 0)^\top$ for
some $g\in \abk \overline{\Gamma}_n$.
\end{lemma}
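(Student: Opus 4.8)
The plan is to reduce $u$ to the standard basis vector $(1,0,\ldots,0)^\top$ in two stages, using elementary matrices (transvections) throughout, so that the resulting $g$ lies in $E_n \bmod m \le \overline{\Gamma}_n$. First I would apply \texttt{Auxiliary1}$(u)$ to obtain $b_2,\ldots,b_n \in \Z_m$ with $w := u_1 + \sum_{i=2}^n b_i u_i \in \Z_m^*$; then the product of transvections $\prod_{i=2}^n t_{1i}(b_i)$ sends $u$ to a vector $u'$ whose first coordinate $u_1' = w$ is a unit. This is the crucial normalization step, and it is exactly what Lemma~\ref{HumphL1} buys us.

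Once the first coordinate is a unit, the rest is straightforward Gaussian elimination. For each $i = 2,\ldots,n$, left-multiplying by $t_{i1}(-u_i' w^{-1})$ clears the $i$th coordinate without disturbing the first, so after these $n-1$ steps we reach $(w,0,\ldots,0)^\top$. Finally, left-multiplying by $t_{12}(1)$ then $t_{21}(w^{-1})$ then $t_{12}(1)$ — or more cleanly by $t_{1n}(1-w)$ after a detour through a nonzero second slot — I would record the standard identity that the product of three transvections $t_{12}(1-w)\, t_{21}(1)\, t_{12}(-1)$ type manipulations realize the diagonal adjustment; the cleanest route is: from $(w,0,\ldots,0)^\top$ apply $t_{21}(w^{-1})$ to get $(w,1,0,\ldots,0)^\top$, then $t_{12}(1-w)$ to get $(1,1,0,\ldots,0)^\top$, then $t_{21}(-1)$ to get $(1,0,\ldots,0)^\top$. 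Composing all of these transvections yields the desired $g \in \overline{\Gamma}_n$ with $gu = (1,0,\ldots,0)^\top$.

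I do not anticipate a serious obstacle here: every matrix used is a transvection, hence automatically in $\SL(n,\Z_m)$, and by Proposition~\ref{GenRingElvsGamma} (with $R = \Z_m$ semi-local) these generate $\overline{\Gamma}_n$ anyway, so there is no determinant bookkeeping to worry about. The only point requiring a little care is verifying that the first clearing stage (the $t_{i1}$ sweep) does not re-pollute the first coordinate — but since $t_{i1}(c)$ adds $c$ times the $i$th coordinate to the first only when $i = 1$, and here the operative transvections have $i \ne 1$ in the row index, the first coordinate is preserved. The mild subtlety worth flagging explicitly is that \texttt{Auxiliary1} works prime-component-wise via the isomorphism $\chi$, so the unit $w$ it produces really is a global unit of $\Z_m$ by the Chinese Remainder Theorem; this is already established in the discussion preceding the lemma, so it may simply be cited.
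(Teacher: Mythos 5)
Your proposal is correct and follows essentially the same route as the paper's own proof: normalize the first coordinate to a unit $w$ via $t_{12}(b_2)\cdots t_{1n}(b_n)$ using Lemma~\ref{HumphL1}, clear coordinates $2,\ldots,n$ with the $t_{i1}(-w^{-1}u_i)$, and finish with the word $t_{21}(-1)\,t_{12}(1-w)\,t_{21}(w^{-1})$ taking $(w,0,\ldots,0)^\top$ to $(1,0,\ldots,0)^\top$. (The alternative three-transvection words you float in passing, e.g.\ $t_{12}(1)\,t_{21}(w^{-1})\,t_{12}(1)$ or $t_{12}(1-w)\,t_{21}(1)\,t_{12}(-1)$, do not in fact work, but the ``cleanest route'' you commit to is exactly the paper's.)
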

\begin{proof}
By Lemma~\ref{HumphL1},
\[
t_{12}(b_2) \cdots t_{1n}(b_n)u =(v_1,u_2,\ldots, u_n)^\top
\]
where $v_1= u_1+ \sum_{i=2}^n b_iu_i$ is a unit of $\Z_m$.
Further,
\[
t_{n1}(-v_1^{-1}u_n) \cdots t_{21}(-v_1^{-1}u_2)(v_1,u_2,\ldots,
u_n)^\top =(v_1,0,\ldots, 0)^\top.
\]
Finally,
\[
t_{21}(-1)\hspace{1pt} t_{12}(1-v_1) \hspace{1pt} t_{21}(v_1^{-1})
(v_1,0,\ldots,0)^\top = (1,0,\ldots, 0)^\top. \qedhere
\]
\end{proof}
\begin{corollary}\label{SLTransOnUnimod}
The set of all unimodular vectors is a $\overline{\Gamma}_n$-orbit
in $\Z_m^n$.
\end{corollary}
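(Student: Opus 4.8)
The plan is to combine Lemma~\ref{SuprSimi} with Lemma~\ref{PreObs}. These two results do almost all the work, so the ``proof'' is really just a matter of assembling them correctly.

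First I would recall that $\overline{\Gamma}_n\leq \GL(n,\Z_m)$, so Lemma~\ref{PreObs} applies: if $u$ and $v$ lie in the same $\overline{\Gamma}_n$-orbit then $\langle u\rangle = \langle v\rangle$. In particular, since $(1,0,\ldots,0)^\top$ is unimodular, every vector in its $\overline{\Gamma}_n$-orbit is unimodular. This shows that the $\overline{\Gamma}_n$-orbit of $(1,0,\ldots,0)^\top$ is contained in the set $U$ of unimodular vectors of $\Z_m^n$.

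Next I would establish the reverse inclusion, which is exactly Lemma~\ref{SuprSimi}: every unimodular $u$ satisfies $gu = (1,0,\ldots,0)^\top$ for some $g\in\overline{\Gamma}_n$, hence $u = g^{-1}(1,0,\ldots,0)^\top$ lies in the $\overline{\Gamma}_n$-orbit of $(1,0,\ldots,0)^\top$. Therefore $U$ equals this single orbit, which is the assertion. I would also remark that $U$ is nonempty (it contains $(1,0,\ldots,0)^\top$), so there is no degenerate case to worry about, and that $\overline{\Gamma}_n$ does indeed permute $U$ among itself — the second half of Lemma~\ref{PreObs}.

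There is essentially no obstacle here; the content was already discharged in Lemma~\ref{SuprSimi}. The only point requiring a moment's care is making sure the two inclusions are stated in the right direction: Lemma~\ref{PreObs} gives ``orbit $\subseteq U$'' (orbits preserve the generated ideal) and Lemma~\ref{SuprSimi} gives ``$U\subseteq$ orbit'' (every unimodular vector is conjugate to the standard basis vector). Combining them yields the equality.
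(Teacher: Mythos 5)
Your proof is correct and matches the paper's intended argument exactly: the corollary is stated without proof precisely because it follows immediately from Lemma~\ref{SuprSimi} (every unimodular vector is $\overline{\Gamma}_n$-conjugate to $(1,0,\ldots,0)^\top$) together with Lemma~\ref{PreObs} (the orbit preserves the generated ideal, so it contains only unimodular vectors). Your care about the two inclusion directions is exactly the right thing to flag.
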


\begin{proposition}\label{ZmSLOrbits}
Non-zero vectors $u$, $v\in \Z_m^n$ are in the same
$\overline{\Gamma}_n$-orbit if and only if $\langle u \rangle
=\langle v \rangle$.
\end{proposition}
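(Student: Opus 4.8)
The plan is to prove the two implications separately, with the forward direction following immediately from Lemma~\ref{PreObs}: if $v = gu$ for some $g\in\overline{\Gamma}_n$, then $\langle u\rangle = \langle v\rangle$. So the real work is the converse. Assume $\langle u\rangle = \langle v\rangle = I$, an ideal of $\Z_m$; by the structure of $\Z_m$ we may write $I = \langle a\rangle$ for some divisor $a$ of $m$, so that $\Z_m/I \cong \Z_a$. The idea is to reduce to the unimodular case already handled in Corollary~\ref{SLTransOnUnimod}. First I would show that $u = a u'$ for some $u'\in\Z_m^n$ that is unimodular \emph{modulo $m/a$} — more precisely, I would pass to the ring $\Z_{m/a}$ (or argue componentwise via the isomorphism $\chi$ of Lemma~\ref{FundamIsom}) and check that the coordinates of $u'$ generate the whole ring there, since otherwise $\langle u\rangle$ would be a proper multiple of $\langle a\rangle$.

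The key step is then to find $g\in\overline{\Gamma}_n$ carrying $u$ to the ``standard'' vector $(a,0,\ldots,0)^\top$ of ideal $\langle a\rangle$; by symmetry the same applies to $v$, and composing gives the desired element. To produce $g$, I would mimic the proof of Lemma~\ref{SuprSimi} but track the scalar $a$: using Lemma~\ref{HumphL1} applied to the unimodular-mod-$(m/a)$ vector $u'$, choose $b_2,\ldots,b_n$ so that after applying $t_{12}(b_2)\cdots t_{1n}(b_n)$ the first coordinate becomes $a v_1$ with $v_1$ invertible modulo $m/a$. A small subtlety is that $v_1$ need not be a unit of $\Z_m$ itself (only modulo $m/a$), so the clearing step $t_{n1}(-v_1^{-1}u_n)\cdots$ must be done with care — but since every remaining coordinate is divisible by $a$, and $\gcd(v_1, m/a)=1$ lets us solve the relevant congruences modulo $m/a$, the transvection entries can be chosen in $\Z_m$ so that the lower coordinates are killed and the first becomes $a v_1$. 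A final $\mathrm{SL}_2$-type adjustment in the $(1,1)$--$(2,1)$ block, as in the last line of Lemma~\ref{SuprSimi} (now scaling by a unit lifting $v_1^{-1}$ modulo $m/a$ and correcting), brings $a v_1$ to $a$. Alternatively, and perhaps more cleanly, one invokes Lemma~\ref{FundamIsom}~(i) to split $\overline{\Gamma}_n \cong \times_i \mathrm{SL}(n,\Z_{p_i^{e_i}})$ and reduces to the prime-power case $m = p^k$, where $a = p^j$ and the argument becomes: strip off the factor $p^j$, apply Corollary~\ref{SLTransOnUnimod} to the resulting unimodular vector in $\Z_{p^{k-j}}^n$, and lift the transvections back to $\Z_{p^k}$.

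I expect the main obstacle to be precisely this lifting issue: an element of $\mathrm{SL}(n,\Z_{m/a})$ does not automatically pull back to $\mathrm{SL}(n,\Z_m)$ fixing the vector up to the scalar, so one must exhibit the transforming element directly at the level of $\Z_m$ rather than reducing and lifting a black-box $g$. The transvection-by-transvection construction above sidesteps this because each $t_{ij}(c)$ with $c\in\Z_m$ lifts trivially, and the only arithmetic needed is solving $v_1 x \equiv (\text{coordinate})/a \pmod{m/a}$, which is possible exactly because $\gcd(v_1,m/a)=1$. Once $u$ and $v$ are both moved to $(a,0,\ldots,0)^\top$, the proof concludes. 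As a by-product one obtains an explicit algorithm (an analogue of {\tt Auxiliary1}) outputting the required $g$ as a word in transvections.
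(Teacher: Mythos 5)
Your proposal is correct in substance but takes a noticeably harder route than the paper, precisely because of a missed observation about how to factor out the scalar. The paper writes $u = a\tilde u$ with $\tilde u$ \emph{unimodular in $\Z_m$} (not merely modulo $m/a$) and likewise $v = a\tilde v$, and then the result is immediate: by Corollary~\ref{SLTransOnUnimod} there is $g\in\overline{\Gamma}_n$ with $g\tilde u = \tilde v$, whence $gu = a\,g\tilde u = a\tilde v = v$. No lifting, no modified clearing step, no separate $\mathrm{SL}_2$ adjustment at the end. The fact that $\tilde u$ can always be chosen unimodular in $\Z_m$ itself is the short step you skipped. It is easiest to see after applying $\chi$ of Lemma~\ref{FundamIsom} to reduce to $m=p^k$, $a=p^j$: since $\langle u\rangle = p^j\Z_{p^k}$, some coordinate $u_{i_0}$ has $p$-adic valuation exactly $j$, so $u_{i_0} = p^j w$ with $w$ a unit of $\Z_{p^k}$, and writing every coordinate as $p^j u_i'$ with $u_{i_0}'=w$ gives $\tilde u = (u_1',\dots,u_n')$ unimodular in $\Z_{p^k}$. (One can also argue directly over $\Z_m$ via a CRT adjustment of the representatives $u_i'$.)

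You instead keep $u'$ unimodular only modulo $m/a$ and then must reprove a scaled variant of Lemma~\ref{SuprSimi}, tracking the divisibility of coordinates by $a$, solving congruences mod $m/a$ for the transvection entries, and lifting $v_1^{-1}$ from $\Z_{m/a}^*$ to $\Z_m^*$ at the end. All of these subsidiary claims are in fact true (in particular $\Z_m^*\to\Z_{m/a}^*$ is surjective, so the final normalization can be realized in $\mathrm{SL}(n,\Z_m)$), so your proof can be made to work, and it has the minor virtue of being more explicitly constructive. But it is substantially longer than needed, and your ``alternative'' prime-power reduction still speaks of lifting transvections from $\Z_{p^{k-j}}$ back to $\Z_{p^k}$, which indicates you had not yet noticed that the stripped vector $u'$ already lives unimodularly in $\Z_{p^k}^n$ and no lift is required. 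The single observation that the scalar can be extracted so as to leave a $\Z_m$-unimodular remainder is what collapses the argument to the paper's two lines.
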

\begin{proof}
Suppose that $\langle u \rangle =\langle v \rangle$; so $u =
a\tilde{u}$ and $v = a\tilde{v}$ for some $a$ dividing $m$, $1\leq
a <m$, and unimodular $\tilde{u}$, $\tilde{v}$. Now the result is
apparent by Lemma~\ref{PreObs} and
Corollary~\ref{SLTransOnUnimod}.
\end{proof}

\begin{corollary}\label{FirstOneOneCor}
The map defined by $\overline{\Gamma}_n u \mapsto \langle u
\rangle$ is a bijection between the set of $\,
\overline{\Gamma}_n$-orbits in $\Z_m^n$ and the set of ideals of
$\Z_m$.
\end{corollary}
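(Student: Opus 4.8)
The plan is to derive Corollary~\ref{FirstOneOneCor} as an almost immediate consequence of Proposition~\ref{ZmSLOrbits}, with only a couple of points needing a word of justification. First I would observe that the assignment $\overline{\Gamma}_n u \mapsto \langle u\rangle$ is well defined: by Lemma~\ref{PreObs}, $\langle xu\rangle = \langle u\rangle$ for every $x\in\GL(n,\Z_m)$, and in particular for every $x\in\overline{\Gamma}_n$, so the ideal $\langle u\rangle$ depends only on the orbit $\overline{\Gamma}_n u$ and not on the chosen representative $u$. Note that this covers the zero vector as well, since $\langle 0\rangle$ is the zero ideal and $\{0\}$ is a single orbit.

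Next I would establish injectivity. Suppose $\overline{\Gamma}_n u$ and $\overline{\Gamma}_n v$ have the same image, i.e.\ $\langle u\rangle = \langle v\rangle$. If this common ideal is zero then $u = v = 0$ and the orbits coincide trivially; otherwise $u$ and $v$ are both non-zero, and Proposition~\ref{ZmSLOrbits} says precisely that $u$ and $v$ lie in the same $\overline{\Gamma}_n$-orbit. Hence the map is one-to-one.

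For surjectivity, I would take an arbitrary ideal $I$ of $\Z_m$. Since $\Z_m$ is a principal ideal ring, $I = \langle a\rangle$ for some $a$ dividing $m$ with $0\le a < m$ (the case $a=0$ giving the zero ideal, the case $a=1$ giving all of $\Z_m$). The vector $u = (a,0,\dots,0)^\top$ then satisfies $\langle u\rangle = \langle a\rangle = I$, so $\overline{\Gamma}_n u$ is a preimage of $I$. This exhausts all ideals.

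I do not anticipate a genuine obstacle here; the only mild subtlety is bookkeeping around the zero vector and the trivial/improper ideals, which are handled by noting that the ideal $\langle u\rangle$ encodes "how far $u$ is from unimodular" and by invoking Corollary~\ref{SLTransOnUnimod} (transitivity on unimodular vectors) implicitly through Proposition~\ref{ZmSLOrbits}. All substantive content has already been proved: well-definedness rests on Lemma~\ref{PreObs} and injectivity on Proposition~\ref{ZmSLOrbits}, so the corollary is really just the statement that these two facts together say the map is a bijection, combined with the elementary surjectivity argument using scalar multiples of $(1,0,\dots,0)^\top$.
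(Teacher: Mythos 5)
Your argument is correct and is exactly what the paper intends: the corollary has no explicit proof in the paper because it follows immediately from Proposition~\ref{ZmSLOrbits} together with the principality of ideals in $\Z_m$ and the trivial observation about the zero vector. Your careful bookkeeping of the zero ideal and the surjectivity via $(a,0,\dots,0)^\top$ is the natural way to fill in the unwritten details, and matches the paper's approach.
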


\subsection{Orbits in $\Z^n$}

\subsubsection{$\Gamma_n$-orbits}

\begin{lemma}\label{EuclideanType}
Let $u= (u_1, \ldots, u_n)^\top\in \Z^n\setminus \{ 0\}$ and let
$d$ be the gcd of the non-zero entries of $u$. Then $t u =\abk (d,
0, \ldots, 0)^\top$ for some $t\in \Gamma_n$.
\end{lemma}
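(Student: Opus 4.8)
The plan is to reduce the vector $u$ to $(d,0,\ldots,0)^\top$ by a sequence of elementary operations, all realized by transvections (hence by elements of $\Gamma_n = E_n$ for $n\geq 3$, by Proposition~\ref{GenRingElvsGamma}). The key point is that left multiplication by $t_{ij}(a)$ adds $a$ times the $j$-th entry of $u$ to the $i$-th entry, leaving all other entries unchanged; this is exactly a row operation, so we are really performing a version of the Euclidean algorithm on the entries of $u$ simultaneously as we transform it.

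First I would discard the zero entries: if $u_j = 0$, that coordinate stays $0$ under any transvection that does not add a nonzero entry into it, so it suffices to treat the subvector of nonzero entries, and afterwards the full vector is handled by ignoring those coordinates (formally, one works inside the copy of $\Gamma_k$ on the nonzero coordinates and embeds, or simply notes that the needed transvections never touch the zero coordinates once we agree never to add into them). Next, on the nonzero entries, I would run the Euclidean algorithm: repeatedly pick two nonzero entries $u_i, u_j$ with $|u_i| \geq |u_j| > 0$, and apply $t_{ij}(-q)$ where $q = \lfloor u_i/u_j \rfloor$, replacing $u_i$ by $u_i \bmod u_j$. Iterating (a standard descent on $\sum |u_k|$) we drive all but one coordinate to $0$ and are left with a single entry equal to $\pm d$, where $d = \gcd$ of the original nonzero entries, since gcd is preserved by these operations and is the minimal positive value reachable. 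If that entry sits in position $k\neq 1$, move it to position $1$: note $t_{1k}(1) t_{k1}(-1) t_{1k}(1)$ sends $e_k$-coordinate to the $e_1$-coordinate up to sign (this is the standard "rotation" in $\SL_2$, embedded in coordinates $1,k$), so we may arrange the surviving entry to be in position $1$. Finally, if the sign is wrong, i.e. we have $(-d,0,\ldots,0)^\top$, apply the same $\SL_2$-type element once more (a $90^\circ$ rotation squares to $-1$) or use a product of two transvections realizing $\mathrm{diag}(-1,-1,1,\ldots,1)$ in coordinates $1,k$ to flip the sign of the first entry while keeping the rest zero; this lands us at $(d,0,\ldots,0)^\top$ with $d>0$.

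The element $t\in\Gamma_n$ is then the product of all the transvections used, read in the order applied, and it lies in $E_n = \Gamma_n$ by Proposition~\ref{GenRingElvsGamma}(ii) since $n\geq 3$ (for $n=2$ one still gets $t\in\Gamma_2 = \SL(2,\Z)$ directly, as $\Z$ is Euclidean, by Proposition~\ref{GenRingElvsGamma}(i)). The main thing to be careful about is the bookkeeping in the Euclidean descent when there are several nonzero coordinates: one must check that the measure $\sum_k |u_k|$ strictly decreases at each reduction step so the process terminates, and that we never reintroduce a nonzero value into a coordinate we have already cleared once only one "pivot" remains. Both are routine: after the first full pass the gcd $d$ appears in some coordinate, and thereafter every other nonzero coordinate is an integer multiple of $d$, so one more reduction against the pivot clears it exactly. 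The sign and position adjustments at the end are handled entirely inside a single $\SL(2,\Z)$ acting on two coordinates, so no genuine obstacle arises there.
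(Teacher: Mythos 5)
Your proof takes essentially the same route as the paper: reduce $u$ by transvections realizing the Euclidean algorithm on pairs of entries, and conclude since transvections lie in $\Gamma_n$. The paper organizes the steps so that no final relocation or sign-fix is needed: it first moves the nonzero entries into positions $1,\dots,l$ via the swap $t_{j_ii}(-1)\,t_{ij_i}(1)$, then runs Euclidean division on adjacent pairs from the tail with remainders chosen in $[0,|r_{i+1}|)$, so the gcd arrives in position~$1$ already nonnegative. One slip in your endgame is worth flagging: $-1_2$ (i.e.\ $\mathrm{diag}(-1,-1,1,\dots,1)$ restricted to two coordinates) is \emph{not} a product of two transvections, since $t_{1k}(a)\,t_{k1}(b)$ and $t_{k1}(b)\,t_{1k}(a)$ always leave a $1$ on the diagonal; your other suggestion, the square of the rotation $t_{1k}(1)\,t_{k1}(-1)\,t_{1k}(1)$, does equal $-1_2$ in those coordinates and is what you should use (six transvections rather than two, which is harmless). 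Finally, the appeal to Proposition~\ref{GenRingElvsGamma} is unnecessary: $t$ is by construction a product of transvections, so $t\in E_n\leq\Gamma_n$ for the trivial reason that transvections have determinant $1$; the equality $E_n=\Gamma_n$ plays no role here.
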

\begin{proof}
(\mbox{Cf.} \cite[Lemma 3, pp.~72--73]{Supr}.) Say the non-zero
entries of $u$ are $u_{j_1}, \ldots, \abk u_{j_l}$ where $j_1<
\abk \cdots < \abk j_l$. If $u_i=0$ then
\[
t_{j_ii} (-1) \hspace{1pt} t_{ij_i}(1)u = (u_1, \ldots, u_{i-1},
u_{j_i}, u_{i+1},\ldots, u_{j_i-1}, 0, u_{j_i+1}, \ldots,
u_n)^\top.
\]
So the lemma holds for $l=1$, and we may assume that $j_i = i$ and
$l\geq 2$.

Formally, the proof is by induction on $l$. We manufacture $t$ by
applying the Euclidean algorithm repeatedly to pairs of adjacent
nonzero entries of $u$. To begin, put $r_{0} = u_{l-1}$, $r_1 =
u_{l}$; then for $i\geq \abk 0$ and while $r_{i+1}\neq\abk 0$, let
$q_{i+1}$, $r_{i+2}$ be the integers such that $r_i=
r_{i+1}q_{i+1}+r_{i+2}$ and $0\leq\abk r_{i+2}<\abk|r_{i+1}|$. If
$r_k$ is the last non-zero remainder then
\[
t^*u = (u_1, \ldots, u_{l-2},r_k,0, 0 , \ldots, 0)^\top
\]
where
\[
t^* = \left\{ \begin{array}{ll} t_{l,l-1}(-1)\hspace{1pt}
t_{l-1,l}(1)\hspace{1pt} t_{l-1,l}(-q_{k})\cdots t_{l,l-1}(-q_2)
\hspace{1pt} t_{l-1,l}(-q_1) & \quad k \, \text{ odd},\\
t_{l,l-1}(-q_{k})\cdots t_{l,l-1}(-q_2) \hspace{1pt}
t_{l-1,l}(-q_1)& \quad k \, \text{ even}.
\end{array}\right.
\]
At the next stage we put $r_0=u_{l-2}$, $\hspace{.25pt} r_1=r_k$,
and repeat the above. Continuing in this fashion ultimately gives
$t$ as desired.
\end{proof}

\begin{proposition}[\mbox{cf.}
\cite{Supr}, Corollary 1, p.~73]\label{GammanOrbits} Vectors $u$,
$v \in \Z^n$ belong to the same $\Gamma_n$-orbit if and only if
$\langle u \rangle = \langle v \rangle$.
\end{proposition}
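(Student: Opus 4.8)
The plan is to prove Proposition~\ref{GammanOrbits} by reducing to the normalized case handled in Lemma~\ref{EuclideanType}. First I would dispatch the trivial direction: if $u$ and $v$ lie in the same $\Gamma_n$-orbit then, since $\Gamma_n \leq \GL(n,\Q)$ acting on $\Q^n$ restricts to an action on $\Z^n$, Lemma~\ref{PreObs} gives $\langle u \rangle = \langle v \rangle$ immediately (the ideal of $\Z$ generated by the entries is a $\GL(n,\Z)$-invariant).

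For the substantive direction, suppose $\langle u \rangle = \langle v \rangle$. If this common ideal is $\{0\}$ then $u = v = 0$ and there is nothing to prove, so assume $u, v \neq 0$. The ideal $\langle u \rangle$ is generated by the gcd $d$ of the non-zero entries of $u$, and likewise $\langle v \rangle$ is generated by the gcd $d'$ of the non-zero entries of $v$; so $d = d'$ (up to sign, which we can absorb — actually $d, d' \geq 1$ forces $d = d'$). By Lemma~\ref{EuclideanType} there are $t, t' \in \Gamma_n$ with $t u = (d, 0, \ldots, 0)^\top$ and $t' v = (d, 0, \ldots, 0)^\top$. Hence $t' v = t u$, so $v = (t')^{-1} t\, u$ with $(t')^{-1} t \in \Gamma_n$, which shows $u$ and $v$ are in the same $\Gamma_n$-orbit. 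The conclusion follows.

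I do not expect a genuine obstacle here, since the hard work is already encapsulated in Lemma~\ref{EuclideanType}, whose proof carries out the explicit Euclidean reduction. The only point requiring a line of care is the normalization of signs: the gcd $d$ is taken non-negative, so both $u$ and $v$ reduce to the \emph{same} standard vector $(d, 0, \ldots, 0)^\top$ rather than to vectors differing by a sign — and indeed Lemma~\ref{EuclideanType} is stated with $d$ the (positive) gcd, so this is automatic. One might also remark that this argument is constructive: composing the element $t$ produced by the Euclidean procedure with the inverse of the analogous element for $v$ yields an explicit $g \in \Gamma_n$ with $gu = v$, written as a word in transvections, which is exactly the form demanded by the orbit-stabilizer problem (I) stated at the start of Section~\ref{OSProblem}.
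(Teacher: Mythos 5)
Your argument is correct and is exactly the paper's approach: the paper's one-line proof (``In the notation of Lemma~\ref{EuclideanType}, $\langle u\rangle = d\Z$'') is shorthand for precisely the reduction you spell out, namely that Lemma~\ref{PreObs} gives the forward direction, and Lemma~\ref{EuclideanType} lets you carry both $u$ and $v$ to the common canonical form $(d,0,\ldots,0)^\top$, where $d$ is determined by the ideal $\langle u\rangle = \langle v\rangle = d\Z$. The remark about constructivity and sign normalization is a nice observation but adds nothing the paper doesn't already rely on implicitly.
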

\begin{proof}
In the notation of Lemma~\ref{EuclideanType}, $\langle u \rangle =
d\Z$.
\end{proof}

\begin{corollary}
There is a one-to-one correspondence between the set of $\,
\Gamma_n$-orbits in $\Z^n$ and the set of ideals of $\, \Z$.
\end{corollary}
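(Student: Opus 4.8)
The final statement is the corollary asserting a one-to-one correspondence between $\Gamma_n$-orbits in $\Z^n$ and ideals of $\Z$. The plan is to deduce this immediately from Proposition~\ref{GammanOrbits}, which already establishes that two vectors lie in the same $\Gamma_n$-orbit if and only if they generate the same ideal. First I would observe that Proposition~\ref{GammanOrbits} says precisely that the map sending an orbit $\Gamma_n u$ to the ideal $\langle u\rangle$ is well defined (orbit members generate a common ideal) and injective (distinct orbits give distinct ideals). It remains only to check surjectivity onto the set of all ideals of $\Z$.

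For surjectivity, every ideal of $\Z$ is of the form $d\Z$ for a unique integer $d\geq 0$. The ideal $\Z$ itself is $\langle e_1\rangle$ where $e_1=(1,0,\ldots,0)^\top$, any ideal $d\Z$ with $d\geq 1$ is $\langle (d,0,\ldots,0)^\top\rangle$, and the zero ideal is $\langle 0\rangle$, the image of the fixed vector $0\in\Z^n$. (Alternatively, as in Lemma~\ref{EuclideanType} and its use in the proof of Proposition~\ref{GammanOrbits}, any vector with gcd of nonzero entries equal to $d$ generates $d\Z$.) Hence every ideal arises, and the correspondence is a bijection.

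I do not anticipate any real obstacle here: the content has been fully absorbed into Proposition~\ref{GammanOrbits} and Lemma~\ref{EuclideanType}, and the corollary is a one-line repackaging of that proposition together with the trivial remark that the zero ideal and all ideals $d\Z$ are realized. If one wanted to be scrupulous, the only point to watch is the inclusion of the zero vector / zero ideal, which the proof of Proposition~\ref{GammanOrbits} handles because $\langle u\rangle = d\Z$ with $d=0$ exactly when $u=0$, the sole element of its orbit. So the proof reads: ``Immediate from Proposition~\ref{GammanOrbits}: the assignment $\Gamma_n u\mapsto\langle u\rangle$ is well defined and injective by that result, and surjective since $d\Z = \langle (d,0,\ldots,0)^\top\rangle$ for every $d\geq 0$.''
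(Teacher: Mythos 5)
Your proof is correct and takes exactly the route the paper implicitly intends: the corollary is stated without proof precisely because it is an immediate repackaging of Proposition~\ref{GammanOrbits}, with the map $\Gamma_n u \mapsto \langle u\rangle$ well defined and injective by that proposition, and surjective since $d\Z = \langle (d,0,\ldots,0)^\top\rangle$ for every $d\geq 0$ (including $d=0$). Your attention to the zero vector / zero ideal case is a sensible bit of scrupulousness but introduces no divergence from the paper.
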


${\tt Orbit1Gamma}$ accepts $u\in \abk \Z^n\setminus \{ 0\}$ and
(as per the proof of Lemma~\ref{EuclideanType}) returns a pair
$(d,t)$ where $t\in \Gamma_n$, $d$ is the gcd of all non-zero
entries of $u$, and $tu = (d, 0, \ldots, 0)^\top$.

By Proposition~\ref{GammanOrbits}, the next procedure solves the
orbit problem for $\Gamma_n$ acting on $\Z^n$.

\vspace*{10pt}

${\tt OrbitGamma} \hspace{.25pt} (u,v)$

\vspace*{1mm}

Input: $u$, $v\in \Z^n\setminus \{0\}$.

Output: $g\in \Gamma_n$ such that $gu = v$, or  $\tt false$ if
$u$, $v$ are not in the same $\Gamma_n$-orbit.

\vspace*{.25mm}

\begin{enumerate}
\item $( d_1, t_1) := {\tt Orbit1Gamma} \hspace{.25pt} (u)$,
\item[\phantom{}] $( d_2, t_2) := {\tt Orbit1Gamma} \hspace{.25pt}
(v)$.

\item If $d_1\neq d_2$ then return ${\tt false}$,

else return $t_2^{-1}t_1$.

\end{enumerate}

\medskip

\subsubsection{$\Gamma_{n,m}$-orbits}

\begin{lemma}[\cite{Humphreys}, Lemma 2, p.~105]
\label{HumphreysAux} Let $u$, $v\in \Z^n$. Suppose that there is a
non-empty subset $I\subseteq \abk \{ 1, \ldots, n\}$ such that
$u_i = v_i$ for $i\in I$ and $u_i \equiv v_i \mod mm'$ for $i\not
\in I$, where $m'\Z=\abk \langle u_j : j\in I\hspace*{.5pt}
\rangle$. Then $u$, $v$ are in the same $\Gamma_{n,m}$-orbit.
\end{lemma}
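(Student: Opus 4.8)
The plan is to reduce the general case to a single "Euclidean step" along a coordinate not in $I$, and then to iterate. First I would dispose of the trivial case $I = \{1,\ldots,n\}$: then $u = v$ and there is nothing to prove. So assume there is some index $k \notin I$, and write $m'\Z = \langle u_j : j \in I\rangle$, so that $u_k \equiv v_k \bmod mm'$, say $v_k = u_k + mm'c$ for some $c \in \Z$. Since $m'$ is a $\Z$-linear combination of the $u_j$ with $j \in I$, there are integers $a_j$ with $\sum_{j\in I} a_j u_j = m'$. Then the element
\[
g = \prod_{j \in I} t_{kj}(m c\, a_j) \in \Gamma_{n,m}
\]
(the factors commute since they all have the same first subscript $k$, by Lemma~\ref{tijRelations}~(iii)) satisfies $g u = (u_1,\ldots, u_{k-1},\, u_k + mc\sum_{j\in I} a_j u_j,\, u_{k+1},\ldots,u_n)^\top = (u_1,\ldots, u_k + mm'c,\ldots,u_n)^\top$, because $t_{kj}$ adds a multiple of the $j$th coordinate to the $k$th and leaves all others fixed, and $k \notin I$ means none of the coordinates indexed by $I$ is disturbed. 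Hence $gu$ agrees with $v$ in coordinate $k$ as well as in all coordinates of $I$, while still agreeing with $v$ modulo $mm'$ elsewhere; note $m'$ is unchanged since it only depends on the coordinates in $I$, which $g$ fixes.

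Now I would set up the induction: induct on $|\{1,\ldots,n\}\setminus I|$, the number of coordinates where $u$ and $v$ may differ. The base case is $|I| = n$, handled above. For the inductive step, pick $k \notin I$ as above and replace $u$ by $gu$; then $u$ and $v$ agree on the larger set $I \cup \{k\}$, and the ideal generated by $\{(gu)_j : j \in I\cup\{k\}\}$ is $\langle m', v_k\rangle = m''\Z$ with $m'' \mid m'$, so $v_i \equiv (gu)_i \bmod mm'$ implies $v_i \equiv (gu)_i \bmod mm''$ is what we need --- wait, this goes the wrong way, so instead I note $m'' \mid m'$ gives $mm'' \mid mm'$, hence the congruence $u_i \equiv v_i \bmod mm'$ does \emph{not} automatically upgrade. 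The clean fix is to not change $m'$: since $I\cup\{k\}$ contains $I$, we still have $m'\Z \subseteq \langle (gu)_j : j \in I\cup\{k\}\rangle$, and the hypothesis of the lemma for the pair $(gu, v)$ with index set $I\cup\{k\}$ only requires congruence modulo $m$ times the generator of \emph{that} ideal, which divides $m'$ --- so the hypothesis modulo $mm'$ is \emph{stronger} than needed and certainly still holds. By induction, $gu$ and $v$ lie in the same $\Gamma_{n,m}$-orbit, hence so do $u$ and $v$.

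The main obstacle is getting the bookkeeping of the ideal $m'\Z$ exactly right through the induction, since enlarging $I$ can only shrink the relevant ideal, and one must check that the congruence hypothesis one carries forward is always at least as strong as what the inductive call demands (it is, precisely because $m' \mid$ any earlier generator would be backwards --- rather, the new generator divides the old $m'$, so congruence mod $mm'$ implies congruence mod $m$ times the new generator). Once that monotonicity is pinned down, the rest is the routine transvection computation above together with the observation that the constructed $g$ genuinely lies in $\Gamma_{n,m}$ because every transvection parameter is divisible by $m$.
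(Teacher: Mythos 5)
Your proof is correct and follows essentially the same transvection mechanism as the paper's procedure ${\tt Auxiliary2}$: write the discrepancy $v_k - u_k$ (divisible by $mm'$) as $m$ times a $\Z$-linear combination of the $u_j$, $j\in I$, and realize the correction as a product of commuting transvections $t_{kj}(\,\cdot\,)$ with parameters divisible by $m$. The one structural difference is that you fix one coordinate $k\notin I$ at a time and induct on $|\{1,\ldots,n\}\setminus I|$, whereas ${\tt Auxiliary2}$ corrects all coordinates $j\notin I$ simultaneously via $g=\prod_{i\in I,\,j\notin I} t_{ji}(mc_{ji})$ — these transvections commute pairwise because the row index is always outside $I$ and the column index always inside $I$ (Lemma~\ref{tijRelations}~(iii)), so the whole product still leaves coordinates in $I$ untouched and sends $u_j\mapsto v_j$ for every $j\notin I$ in one step. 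The one-shot version avoids your induction and the accompanying bookkeeping about how $m'\Z$ changes when $I$ is enlarged; note that all the hand-wringing in your second paragraph resolves to the simple observation that enlarging $I$ can only enlarge the ideal $\langle u_j:j\in I\rangle$, hence only shrink the generator $m'$, so the carried-forward congruence mod $mm'$ remains at least as strong as what the inductive call requires. You reach the right conclusion, but the exposition would be cleaner if you either stated that divisibility fact up front or, better, dispensed with the induction entirely as in ${\tt Auxiliary2}$.
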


We outline the proof of Lemma~\ref{HumphreysAux} in the form of an
algorithm.

\vspace*{10pt}

${\tt Auxiliary2} \hspace{.25pt} (u, v , I)$

\vspace*{1mm}

Input: $u$, $v\in \Z^n$, $I$ as in Lemma~\ref{HumphreysAux}.

Output: $g\in \Gamma_{n,m}$ such that $gu = v$.

\vspace*{.25mm}

\begin{enumerate}
\item For each $i\in I$ and $j \in \{ 1, \ldots, n\} \setminus I$,
find $c_{ji}\in \Z$ such that $v_j = u_j+m\sum_{i\in I}
c_{ji}u_i$.

\item Return $g:= \prod_{i\in I, j\not \in I} t_{ji}(mc_{ji})$.

\end{enumerate}

\vspace*{7.5pt}

\begin{theorem}\label{SameGammamGen}
Let $u$, $v\in\Z^n\setminus \{ 0\}$ where $\langle u \rangle = a
\Z$. Then $u$ and $v$ are in the same $\Gamma_{n,m}$-orbit if and
only if $\langle u \rangle = \langle v \rangle$ and $u_i\equiv v_i
\mod am$, $1\leq i \leq n$.
\end{theorem}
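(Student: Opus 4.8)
The plan is to prove the two implications separately; necessity is immediate and sufficiency carries the content.

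For \emph{necessity}: if $v=gu$ with $g\in\Gamma_{n,m}$, then $\langle v\rangle=\langle u\rangle=a\Z$ by Lemma~\ref{PreObs} (since $g\in\GL(n,\Z)$). Writing $g=1_n+mw$ with some $w\in\mathrm{Mat}(n,\Z)$ (possible because $g\equiv 1_n\bmod m$) gives $v=u+m(wu)$; as every entry of $wu$ is a $\Z$-combination of $u_1,\dots,u_n$ and hence lies in $\langle u\rangle=a\Z$, I get $v_i-u_i=m(wu)_i\in am\Z$ for each $i$.

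For \emph{sufficiency}, assume $\langle u\rangle=\langle v\rangle=a\Z$ and $u_i\equiv v_i\bmod am$. First I would rescale: writing $u=a\tilde u$, $v=a\tilde v$, the vectors $\tilde u,\tilde v\in\Z^n$ become unimodular, satisfy $\tilde u_i\equiv\tilde v_i\bmod m$, and $gu=v$ is equivalent to $g\tilde u=\tilde v$; so it suffices to find $g\in\Gamma_{n,m}$ with $g\tilde u=\tilde v$ for unimodular $\tilde u,\tilde v$ congruent modulo $m$. By Lemma~\ref{EuclideanType} a unimodular vector is exactly the first column of a matrix in $\Gamma_n=\SL(n,\Z)$; so I would pick $g_u,g_v\in\Gamma_n$ with first columns $\tilde u,\tilde v$ and put $h=g_vg_u^{-1}$, whence $h\tilde u=g_ve_1=\tilde v$. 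The only flaw is that $h$ need not lie in $\Gamma_{n,m}$. To repair this without disturbing the image of $\tilde u$, note that $\bar h=\varphi_m(h)$ fixes $\overline{\tilde u}$ (because $\tilde u\equiv\tilde v\bmod m$), i.e.\ $\bar h\in\mathrm{Stab}_{\overline{\Gamma}_n}(\overline{\tilde u})$ where $\overline{\Gamma}_n=\SL(n,\Z_m)$; if I can choose $k\in\mathrm{Stab}_{\Gamma_n}(\tilde u)$ with $\varphi_m(k)=\bar h^{-1}$, then $g:=hk$ lies in $\ker\varphi_m=\Gamma_{n,m}$ and still satisfies $g\tilde u=h\tilde u=\tilde v$, as required.

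The remaining---and, I expect, only real---point is to show that $\varphi_m$ maps $\mathrm{Stab}_{\Gamma_n}(\tilde u)$ onto $\mathrm{Stab}_{\overline{\Gamma}_n}(\overline{\tilde u})$. I would reduce to $\tilde u=e_1$ by conjugating with a matrix $g_u$ (and its reduction $\bar g_u$) carrying $e_1$ to $\tilde u$, which is $\varphi_m$-equivariant on stabilizers. For $e_1$, the stabilizer in $\Gamma_n$ is precisely the set of block matrices $\left(\begin{smallmatrix}1 & b^\top\\ 0 & C\end{smallmatrix}\right)$ with $b\in\Z^{n-1}$ and $C\in\SL(n-1,\Z)$, with the same description over $\Z_m$; since $b$ lifts freely and $\varphi_m\colon\SL(n-1,\Z)\to\SL(n-1,\Z_m)$ is surjective by Proposition~\ref{GenRingElvsGamma} (trivially so if $n=2$), the stabilizer reduction is onto. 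So the hard part is this surjectivity---equivalently, that every $\overline{\Gamma}_n$-translate of $e_1$ is realized by a matrix congruent to $1_n$ modulo $m$---although even it is short. (Alternatively one could first use Lemma~\ref{SuprSimi}, surjectivity of $\varphi_m$ on $\Gamma_n$, and $\Gamma_{n,m}\unlhd\Gamma_n$ to reduce to $\tilde u\equiv\tilde v\equiv e_1\bmod m$ and then apply Lemma~\ref{HumphreysAux}, but the route above is cleaner.)
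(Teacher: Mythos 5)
Your proof is correct, but it takes a genuinely different route from the paper's. For sufficiency in the case $n>2$ the paper simply cites the theorem on p.~101 of Humphreys, whose argument (reproduced constructively in the paper's ${\tt OrbitGamma}\_{\tt m}$ procedure) builds the required element of $\Gamma_{n,m}$ explicitly as a product of transvections via Lemma~\ref{HumphL1} and Lemma~\ref{HumphreysAux}; for $n=2$ it writes down an explicit $2\times 2$ matrix $h$. Your argument is instead uniform in $n\geq 2$ and more structural: after rescaling to unimodular vectors and producing some $h\in\Gamma_n$ carrying $\tilde u$ to $\tilde v$, you ``correct'' $h$ by an element of $\mathrm{Stab}_{\Gamma_n}(\tilde u)$, reducing everything to the single claim that $\varphi_m$ maps $\mathrm{Stab}_{\Gamma_n}(\tilde u)$ onto $\mathrm{Stab}_{\overline\Gamma_n}(\overline{\tilde u})$. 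Conjugating the stabilizer to the parabolic $\Lambda_n$ (which is exactly the description the paper itself uses later in the stabilizer subsection), that surjectivity comes down to $\varphi_m\colon\SL(n-1,\Z)\to\SL(n-1,\Z_m)$ being onto, which is available from Proposition~\ref{GenRingElvsGamma}. Your necessity argument (writing $g=1_n+mw$ and noting that the entries of $wu$ lie in $\langle u\rangle=a\Z$) is also fine. The trade-off is that the paper's route is manifestly constructive and directly drives the algorithm ${\tt OrbitGamma}\_{\tt m}$, whereas your approach is shorter and conceptually cleaner but defers the constructive content to a lifting step (finding an integral preimage of $\bar h^{-1}$ in the parabolic), which the paper handles separately under ``Preimages under $\varphi_m$.''
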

\begin{proof}
See the theorem on p.~101 of \cite{Humphreys} for $n>2$. Suppose
that $n=2$, $\langle u \rangle = \langle v \rangle$, and
$u_i\equiv \abk v_i \abk \mod am$. Then $tv=\abk (a,0)^\top$ and
$tu=a( 1+mr, ms)^\top$ for some $t\in \Gamma_2$ and $r$, $s\in
\abk \Z$ such that $\langle 1+mr, ms\rangle = \Z$, say
$x(1+mr)+yms=1$. Consequently $h^tu =\abk  v$ where
\[
h={\small \left(\hspace*{-.05mm}
\renewcommand{\arraycolsep}{.15cm}
\begin{array}{cc}
\hspace*{-2.5pt} 1-mr x & -mry\\
\vspace*{-12pt} &\\  -ms & 1+mr
\end{array}\hspace*{-2.5pt} \hspace*{-.05mm} \right)}.
\qedhere \]
\end{proof}

The procedure below incorporates the method for $n>2$ in
\cite[pp.~105--106]{Humphreys}. 
Lines beginning `$\#$' contain explanatory comments.

\vspace*{10pt}

${\tt OrbitGamma}\_{\tt m} \hspace{1pt} (u,v)$

\vspace*{2.5mm}

Input: $u$, $v \in \Z^n$, $n\geq 2$.

Output: $g\in \Gamma_{n,m}$ such that $gu =v$, or ${\tt false}$ if
$u$, $v$ are not in the same $\Gamma_{n,m}$-orbit.

\vspace*{2.5mm}

\begin{enumerate}
\item If  ${\tt OrbitGamma} \hspace{1pt}  (u,v) = {\tt false}$
then return ${\tt false}$.
\end{enumerate}

\begin{enumerate} \item[(2)] If
$u_i \not \equiv v_i \mod am$ for some $i$, where $(a,t):= {\tt
Orbit1Gamma}(v)$, then return ${\tt false}$,

else $u:= \frac{1}{a}tu$.
\end{enumerate}

\vspace{3pt}

\hspace{3pt} $\#$ {\sf \small $u$ is now unimodular, $u_1 \equiv 1
\mod m$, and $u_i \equiv 0 \mod m$ for $i> 1$.}

\vspace{5pt}

\begin{enumerate}
\item[(3)] Apply ${\tt Auxiliary1}$ to find  $b_3, \ldots , b_n\in
\Z$ such that $c:=u_2 + r\sum_{i=3}^n b_i u_i$ is coprime to
$u_1$,  where $u_1 = 1-r$, $r\in m\Z$.
\end{enumerate}

\vspace{3pt}

\hspace{3pt} $\#$ {\sf \small $u$ unimodular $\implies$
$(u_2,ru_3,\ldots , ru_n)^\top$ unimodular mod $u_1$.}

\vspace{6pt}

\begin{enumerate} \item[(4)] If $n\geq 3$ then

$s_1:= {\tt Auxiliary2} \hspace{.25pt} (u, ( u_1, c, u_3, \ldots,
u_n)^\top, \{3, \ldots, n\})$,

\vspace{4pt}

\hspace{-31.5pt}  $\#$ {\sf \small $u$, $( u_1, c, u_3, \ldots,
u_n)^\top$, and $I = \{3, \ldots, n\}$ satisfy the hypotheses of
Lemma~\ref{HumphreysAux}.}

\vspace{6pt}

$s_2:= {\tt Auxiliary2} \hspace{.25pt} ((u_1, c, u_3, u_4 ,
\ldots, u_n)^\top, ( u_1, c, r, 0, \ldots, 0)^\top, \{1,2\})$.
\end{enumerate}

\vspace{4pt}

\hspace{3pt} $\#$ {\sf \small Lemma~\ref{HumphreysAux} again, with
$m'=\mathrm{gcd}(u_1,c)=1$.}

\vspace{6pt}

\begin{enumerate}

\item[(5)] If $n=2$ then $s:=h$ as in the proof of
Theorem~\ref{SameGammamGen},

else $s := s_3s_2s_1$ where $s_3 :=t_{13}(-1)\hspace{1pt}
t_{31}(-r) \hspace{1pt} t_{21}(-c)\hspace{1pt} t_{13}(1)$.
\end{enumerate}

\vspace{2.5pt}

\hspace{3pt} $\#$ {\sf \small $s_3\in \Gamma_{n,m}$ because
$\Gamma_{n,m} \unlhd \Gamma_n$.}

\vspace{6pt}

\begin{enumerate}

\item[(6)]  Return $g:= s^t$.

\end{enumerate}

\vspace{2.5pt}

\hspace{3pt} $\#$ {\sf \small $s^t \in \Gamma_{n,m}$ and $s
\frac{1}{a}tu = (1,0, \ldots, 0)^\top = t \frac{1}{a}v$ for the
original input $u$, $v$.}

\vspace{7.5pt}

\subsection{Stabilizers in $\Gamma_n$ and $\Gamma_{n,m}$}

Suppose that $\Gamma_{n,m} \leq H\leq \Gamma_n$ and $u\in \Z^n
\setminus \{0\}$. As an arithmetic subgroup of an algebraic group,
$\mathrm{Stab}_H(u)$ is finitely generated \cite[p.~744]{GSIII}.
Indeed, $\mathrm{Stab}_{\Gamma_n}(u) =\abk \Lambda_n^t$ where
${\tt Orbit1Gamma}(u)= (d,t)$ and $\Lambda_n$ is the affine group
\[
{\footnotesize \left(\begin{array}{cccc} 1 &
* & \cdots &
*\\
0&  & & \\
\vdots & & {\large \Gamma_{n-1}}& \\
0 &  & & \\
\end{array}\right)}.
\]
Hence $\mathrm{Stab}_{\Gamma_n}(u)$ is generated by $t_{12}(1)^t,
\ldots , \abk t_{1n}(1)^t$, $\mathrm{diag}(1,x)^t$, and
$\mathrm{diag}(1,y)^t$, where $x$, $y$ are the generators of
$\Gamma_{n-1}$ given in Subsection~\ref{CongSubgroupGens}. Next,
\[
\mathrm{Stab}_{\Gamma_{n,m}}(u) = \mathrm{Stab}_{\Gamma_n}(u) \cap
\Gamma_{n,m} = (\Lambda_n\cap \Gamma_{n,m})^t.
\]
Plainly $\Lambda_n\cap \Gamma_{n,m}$ is generated by
$\mathrm{diag}(1,x)$ as $x$ ranges over a generating set of
$\Gamma_{n-1,m}$ (for which 
see Proposition~\ref{VenkaSury}), together with
$t_{12}(m), \ldots , t_{1n}(m)$. We denote by ${\tt
StabGamma}\_{\tt m}$ the procedure that returns the set of
$t$-conjugates of these matrices for input $u$.

\subsection{Solution of the orbit-stabilizer problem
 for arithmetic groups}

With Proposition~\ref{OSGammaViaOSGammam} and its proof in mind, we
now describe the main algorithms of this section.

As $\Gamma_{n,m}\lhd H$, the orbits of $\Gamma_{n,m}$ form a block
system for $H$. All vectors in a block have the same reduction
modulo $m$ (but vectors with equal reduction may not be in the
same block). We first check for equivalence of vectors under the
action by $\overline{H}=\varphi_m(H)$, and compute generators for
stabilizers in $\overline H$. Then we represent each
$\Gamma_{n,m}$-orbit by a vector in $\Z^n$ and use ${\tt
OrbitGamma}\_{\tt m}$ to test orbit equality. We shall write
$\underline{u}$ for $\Gamma_{n,m} u$; that is,
$\underline{u}=\underline{v}$ if and only if ${\tt
OrbitGamma}\_{\tt m}\hspace{1pt} (u,v)$ is not ${\tt false}$.

To determine stabilizers (and thereby eliminate surplus
generators) in $H$ we calculate the induced action of
$\overline{H}$ and then take preimages.

If $h\in H$ stabilizes $\underline{u}$ then we put $g_h={\tt
OrbitGamma}\_{\tt m} \hspace{1pt} (u,hu)$. Hence
$\mathrm{Stab}_H(u)$ is generated by ${\tt StabGamma}\_{\tt m}
\hspace{.5pt} (u)$ together with the corrected elements
$g_h^{-1}h$.

We state the algorithms below.

\vspace*{10pt}

${\tt Orbit} \hspace{.25pt} (u,v,S)$

\vspace*{1mm}

Input: $u$, $v\in \Z^n\setminus \{ 0\}$ and $S\subseteq \Gamma_n$
such that $\Gamma_{n,m}\leq H= \gpess$.

Output: $h\in H$ such that $hu = v$, if $v\in Hu$; $\tt false$
otherwise.

\vspace*{.25mm}

\begin{enumerate}

\item Determine $\mathrm{Stab}_{\overline H}(\overline u)$ and
$\overline{H}\overline{u}$.

\noindent If $\overline v \not \in \overline{H}\overline u$ then
return ${\tt false}$,

else select $\overline{h_1}\in \overline{H}$ such that
$\overline{h_1v}=\overline{u}$ and replace $v$ by $h_1v$.

\item Determine the $K$-orbit of $\underline{u}$, where $K$ is the
full preimage of $\mathrm{Stab}_{\overline H}(\overline u)$ in
$H$.

\noindent If $\underline{v}\not \in K\underline{u}$ then return
${\tt false}$,

else select $h_2\in K$ such that $\underline{h_2v}=\underline{u}$
and replace $v$ by $h_2v$.

\item $g:={\tt OrbitGamma}\_{\tt m} \hspace{1pt} (u,v)$.

\item Return $h_1^{-1}h_2^{-1}g$.

\end{enumerate}

\vspace*{10pt}

${\tt Stabilizer} \hspace{.25pt} (u,S)$

\vspace*{1mm}

Input: $u\in \Z^n\setminus \{ 0\}$ and $S\subseteq \Gamma_n$ such
that $\Gamma_{n,m}\leq H= \gpess$.

Output: a generating set for $\mathrm{Stab}_H(u)$.

\vspace*{.25mm}

\begin{enumerate}

\item $K:=$ the full preimage of $\mathrm{Stab}_{\overline
H}(\overline{u})$ in $H$.

\item $L := \mathrm{Stab}_{K}(\underline{u})$.

\item $g_h:= {\tt OrbitGamma}\_{\tt m} \hspace{1pt} (u,hu)$ for
each generator $h$ of $L$,

\noindent $A:=\{g_h^{-1} h\mid h\mbox{\ a generator of $L$}\}$.

\item Return $A\cup{\tt StabGamma}\_{\tt m} \hspace{.5pt} (u)$.

\end{enumerate}

\vspace{1.5pt}

\subsection{Remarks on and refinements of the algorithms}

The stabilizer calculations for $\overline{u}$ and $\underline{u}$
are done in $\overline{H}$ via the data structure of
Subsection~\ref{DataStructureZm}. We use the solvable radical of
$\overline H$ to deal with orbits, as in~\cite{Hulpke2013}.
Typically the main obstacle is that $\overline{H}\overline{u}$ can
be very long. To ameliorate this we take orbits of $\varphi_r(u)$
for an increasing sequence of divisors $r$ of $m$.

A further refinement (as with any linear action) is given by the
imprimitivity system arising from the relation of vectors being
unit multiples of each other. Here $\overline{H}$ acts on blocks
projectively; i.e., as $\overline{H}Z/Z$ where $Z=\abk
Z(\SL(n,\Z_r))=\{a1_n \st a\in\Z_r^*\}$. We implement this action
by representing each block by a normalized vector. For prime $r$,
this means scaling the vector so that its first nonzero entry is
$1$. If the original entry has a common divisor with $r$ greater
than $1$, then a minimal associate will be different from $1$ and
will usually have a nontrivial stabilizer. This stabilizer is then
used to minimize entries in subsequent positions.

\subsection{Preimages under $\varphi_m$}

A basic operation when utilizing congruence homomorphisms is to
find preimages: for $b\in\overline\Gamma_n$ find $c\in\Gamma_n$
such that $\varphi_m(c)=b$ (any preimage will do because
$\Gamma_{n,m}\le H$). We cannot simply treat $b$ as an integer
matrix; it need not have determinant $1$ over $\Z$.

Matrix group recognition~\cite{CT} maintains a history of how each
element of $\overline H$ was obtained as a word in congruence
images of generators of $H$. Long product expressions tend to
build up when constructing a composition tree for $\overline H$
using pseudo-random products. Evaluating these expressions back in
characteristic $0$ leads to large matrix entries.

We could write $b$ as a product of transvections in
$\overline\Gamma_n$ and then form the same product over $\Z$.
Similarly, suppose that $c$ has Smith Normal Form $c_L c_D c_R$
where $c_L$, $c_R\in \abk \Gamma_n$ and $\overline{c_D}=1_n$. Thus
$\overline{c_L c_R}=\overline{c}=b$ and $c_L c_R$ is a suitable
preimage. Still, these approaches sometimes produced larger matrix
entries than in the following heuristic.

Let $x$ be the transposed adjugate $\det(c)\left(c^{-1}\right)^{\!
\top}$. Adding $1$ to $c_{ij}$ for $i\neq j$ adds $x_{ij}$ to
$\det (c)$. If $\det (c)\neq 1$ and $\det (c)+amx_{ij}$ is
positive of smaller absolute value, then add $am$ to $c_{ij}$.
Repeat with updated $x$. If no such $x_{ij}$ exists (all entries
of $x$ are larger in absolute value than $\det (c)$), then we can
try to use instead the gcd of two entries of $x$ in the same row
or column. Eventually $\det (c) = 1$, or we must resort to the
other methods.

\section{Generalizing to any arithmetic group in $\slnq$}
\label{IntertwiningArithmeticGroupstoSLnZ}

Let $H\leq \slnq$ be arithmetic. We explain how to compute $g\in
\GL(n,\Q)$ such that $H^g \leq \abk \Gamma_n$. Our algorithms may
therefore be modified to accept any arithmetic group in $\slnq$;
i.e., not necessarily given by a generating set of integer
matrices.

\begin{lemma}
\label{BBRspLemma} The following are equivalent, for a finitely
generated subgroup $H$ of $\GL(n,\Q)$.
\begin{itemize}
\item $H_\Z:=H\cap \Gamma_n$ has finite index in $H$.

\item $H$ is $\glnq$-conjugate to a subgroup of $\GL(n,\Z)$.

\item  There exists a positive integer $d$ such that $dH \subseteq
\mathrm{Mat}(n,\Z)$.

\item $\mathrm{tr}(H) = \{ \mathrm{tr}(h) \st h \in H\} \subseteq
\Z$.

\end{itemize}

\end{lemma}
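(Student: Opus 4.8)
The plan is to establish the four equivalences in a cycle, exploiting the fact that finite generation makes each condition an assertion about finitely many matrices. I would prove the implications in the order
\[
(1) \Rightarrow (2) \Rightarrow (3) \Rightarrow (4) \Rightarrow (1).
\]

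For $(1) \Rightarrow (2)$: if $H_\Z = H \cap \Gamma_n$ has finite index in $H$, then $H$ has a finite-index subgroup $H_\Z$ preserving the lattice $\Z^n$; averaging the lattices $h \Z^n$ over a transversal of $H_\Z$ in $H$ (equivalently, taking the $H$-module generated by $\Z^n$, which is finitely generated over $\Z$ because the index is finite) produces an $H$-invariant lattice $\Lambda$ of full rank, and any $g \in \glnq$ carrying $\Z^n$ onto $\Lambda$ conjugates $H$ into $\GL(n,\Z)$. For $(2) \Rightarrow (3)$: if $H^g \subseteq \GL(n,\Z)$ then every entry of every $h \in H$ lies in $\frac{1}{d}\Z$ where $d$ is a common denominator of the entries of $g$ and $g^{-1}$ (one checks $h = g^{-1}(h^g)g$ has entries in $\tfrac{1}{d^2}\Z$, say), so a single $d$ clears all denominators since the entries of $H$ lie in a fixed finitely generated subgroup of $\Q$ determined by the generators. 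For $(3) \Rightarrow (4)$: traces of matrices with entries in $\tfrac1d\Z$ lie in $\tfrac1d\Z$, but traces are also algebraic integers (eigenvalues of $h \in \SL(n,\Q)$ are algebraic integers, being roots of a monic integer — in fact, $\det h = 1$ — characteristic polynomial after clearing the bounded denominator, or more simply: $\mathrm{tr}(H)$ is integral over $\Z$ because $H$ lies in $\SL(n,\overline{\Q})$ and $\tfrac1d H \subseteq \mathrm{Mat}(n,\Z)$ forces the characteristic polynomial of $dh$ to be monic integral), hence lie in $\Z$.

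The substantive implication is $(4) \Rightarrow (1)$, and this is where I expect the real work. The idea is that $\mathrm{tr}(H) \subseteq \Z$ controls the ring generated by $H$: letting $A = \Z[H] \subseteq \mathrm{Mat}(n,\Q)$ be the (finitely generated as a ring, since $H$ is finitely generated as a group and inverses are polynomials in the generators by Cayley–Hamilton together with $\det h^{\pm1}=1$) subring, one shows that the trace form $(a,b) \mapsto \mathrm{tr}(ab)$ takes values in $\Z$ on $A$; this is a nondegenerate bilinear form on the semisimple quotient, and it pins $A$ between a lattice and its dual, forcing $A$ to be a finitely generated $\Z$-module, i.e.\ an order. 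Then $H \subseteq A^\times$, and since $A$ is an order in $\mathrm{Mat}(n,\Q)$ (after passing to the $\Q$-span, which is all of $\mathrm{Mat}(n,\Q)$ when $H$ is absolutely irreducible, or a product of matrix algebras in general), $A^\times \cap \Gamma_n$ has finite index in $A^\times$ by standard finiteness of units in orders, hence $H_\Z$ has finite index in $H$. The main obstacle is making the trace-form argument run cleanly without assuming irreducibility: one must either decompose $\Q^n$ under $H$ into isotypic pieces and argue componentwise, or invoke directly that a subring of $\mathrm{Mat}(n,\Q)$ on which $\mathrm{tr}(ab) \in \Z$ and which is finitely generated as a ring must be contained in a maximal order — the cleanest route is probably to cite the well-known fact (essentially the Jordan–Zassenhaus circle of ideas, or \cite{GSI} itself) that a finitely generated group $H \le \GL(n,\Q)$ with $\mathrm{tr}(H) \subseteq \Z$ is conjugate into $\GL(n,\Z)$, but since we want a self-contained account the order-theoretic argument above is the honest one to write out.
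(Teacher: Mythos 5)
Note first that the paper's own ``proof'' is just a citation to \cite[Section~3]{DeGrDetF} and \cite[Theorem~2.4]{BBR}, so you are attempting something more ambitious; your plan of a cyclic chain is the natural one and $(1)\Rightarrow(2)\Rightarrow(3)$ go through essentially as you say. But there are two genuine gaps. For $(3)\Rightarrow(4)$: ``clearing the bounded denominator'' turns $\chi_h$ into $\chi_{dh}(\lambda)=d^n\chi_h(\lambda/d)$, which is monic integral, so the eigenvalues of $dh$ (not of $h$) are algebraic integers and all you recover is $d\,\mathrm{tr}(h)\in\Z$, i.e.\ $\mathrm{tr}(h)\in\tfrac1d\Z$ --- the thing you already knew. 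The parenthetical ``$\det h=1$'' does not rescue this: $\mathrm{diag}(2,1/2)$ has determinant $1$ and non-integral trace, and it is ruled out of $(3)$ only because its \emph{negative} powers have unbounded denominators. A correct argument must use both $h$ and $h^{-1}$: since $\Z[h,h^{-1}]\subseteq\tfrac1d\mathrm{Mat}(n,\Z)$ is a finitely generated $\Z$-module on which $h$ acts faithfully, $h$ is integral over $\Z$, hence $\chi_h\in\Z[\lambda]$ and $\mathrm{tr}(h)\in\Z$. (Alternatively, and more in the spirit of your other steps, route $(3)\Rightarrow(2)\Rightarrow(4)$: the lattice $\Lambda=\sum_{h\in H}h\Z^n\subseteq\tfrac1d\Z^n$ is full and $H$-invariant, and traces are conjugation-invariant.)

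For $(4)\Rightarrow(1)$ you correctly identify the obstruction but do not resolve it. The trace form $\mathrm{tr}(ab)$ on $\Q[H]$ has radical equal to the Jacobson radical of $\Q[H]$, so it is degenerate precisely when $H$ does not act completely reducibly on $\Q^n$ --- already $H=\langle t_{12}\rangle$ gives $\Q[H]=\Q 1_n+\Q e_{12}$ with singular Gram matrix. Thus ``pinning $A$ between a lattice and its dual'' fails exactly in the non-semisimple case, and ``decomposing into isotypic pieces'' does not apply either; what is needed is a block-triangularization along a composition series with separate control of the off-diagonal blocks (this is essentially what \cite{BBR} carries out). As written, your argument for the one implication that actually carries the content of the lemma ends by deferring to a citation, which --- given that the paper does the same --- is acceptable, but then the claim that ``the order-theoretic argument above is the honest one to write out'' is not supported, since that argument is incomplete as stated.
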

\begin{proof}
See \cite[Section 3]{DeGrDetF} and \cite[Theorem~2.4]{BBR}.
\end{proof}

An integer $d=d(H)$ as in Lemma~\ref{BBRspLemma} is a \emph{common
denominator} for $H$. Suppose that $H = \abk \gpess\leq \abk
\slnq$ is arithmetic. Hence $d$ exists. Let $\mathcal{A} = \{ a_1,
\ldots ,\abk a_{n^2} \}\subseteq H$ be a basis of the enveloping
algebra $\langle H\rangle_{\Q}$, and let $c$ be a common multiple
of the denominators of all entries in the $a_i$. By the proof of
\cite[Theorem~2.4]{BBR} we can take $d =  c\,
\mathrm{det}([\mathrm{tr}(a_ia_j)]_{ij})$. A basis $\mathcal{A}$
can be found by, e.g., a standard `spinning-up' process. However,
when we know $m$ such that $\Gamma_{n,m}$ is in the finite index
subgroup $H_\Z$ of $\Gamma_n$, we can write down $\mathcal A$
directly. Let $b_k(m)$ be the block diagonal matrix with
\begin{equation*}
{\small \left( \hspace{-1pt}
\renewcommand{\arraycolsep}{.1225cm}\begin{array}{cc} 1+m & m \\
-m & 1-m \end{array}\hspace{-1pt} \right)}
\end{equation*}
in rows/columns $k$, $k+1$, and $1$s elsewhere on the main
diagonal. Then
\[
\{ 1_n, \, t_{ij}(m), \, b_k(m) \st 1\leq i, j \leq n, \, i \neq
j, \ 1\leq k \leq n-1 \}
\]
is a basis $\mathcal A\subseteq H$ with $c=1$.

With a common denominator $d=d(H)$ in hand, we invoke ${\tt
BasisLattice}$ from \cite[Section 3]{DeGrDetF} with input $S$,
$d$. If $g$ is any matrix whose columns are the elements of ${\tt
BasisLattice}\hspace{.25pt} (S, d)$ then $g \in \GL(n,\Q)$ and
$H^g \leq \abk \Gamma_n$.

\section{Implementation}

Our algorithms have been implemented in {\sf GAP} \cite{GAP}. For
matrix group recognition, we rely on the ${\tt recog}$
package~\cite{neunhoefferseress06} of Max~Neunh{\"o}ffer
and \'Akos~Seress.

To demonstrate practicality, and the effect that input parameters
(degree $n$, number of generators, size of matrix entries,
index in $\Gamma_n$) have on performance, we ran experiments on a
range of arithmetic groups. Except for the elementary groups (see
Proposition~\ref{Excellent}), we chose a value of $m$ that exposed
a nontrivial quotient but which we cannot yet prove to be maximal;
that is, the groups all contain $\Gamma_{n,m}$.

In Table~\ref{TimesA}, `$\#$ gens' is the number of generators
outside $\Gamma_{n,m}$, and $l$ is the decadic logarithm of the
largest generator entry. Times (in seconds on a 3.7GHz Quad-Core
late~2013 Mac Pro with 32GB memory) are for computing the index in
$\Gamma_n$.

\begin{table}[htb]

\begin{tabular}{|l|r|r|r|r|r|r|}
\multicolumn{1}{c}{Group} & \multicolumn{1}{c}{$\#$ gens} &
\multicolumn{1}{c}{$n$} & \multicolumn{1}{c}{$m$} &
\multicolumn{1}{c}{$l$} & \multicolumn{1}{c}{Index in $\Gamma_n$}
& \multicolumn{1}{c}{Time}
\\
\hline $E_{4,12}$ &$12$ &$4$ &$2^{4}3^{2}$ &$1$
&$2^{35}3^{11}5^{2}7 {\cdot}13$ & $0.8$
\\
$E_{4,53}$ &$12$ &$4$ &$53^{2}$ &$2$ &$2^{9}3^{6}5 {\cdot}7
{\cdot}13^{3}53^{9}281 {\cdot}409$ & $0.1$
\\
$E_{4,3267}$ &$12$ &$4$ &$3^{6}11^{4}$ &$4$ &$2^{16}3^{47}5^{4}7
{\cdot}11^{27}13 {\cdot}19 {\cdot}61$ & $2$
\\
$E_{8,7}$ &$56$ &$8$ &$7^{2}$ &$1$
&$2^{22}3^{9}5^{4}7^{35}19^{2}29 {\cdot}43 {\cdot}1201 {\cdot}2801
{\cdot}4733$ & $13$
\\
$\mbox{\mbox{RAN}}_1$ &$5$ &$4$ &$2^{5}3^{2}$ &$21$
&$2^{50}3^{18}5^{2}7 {\cdot}13$ & $1$
\\
$\mbox{\mbox{RAN}}_2$ &$3$ &$4$ &$2^{8}3^{4}$ &$21$
&$2^{74}3^{30}5^{2}7 {\cdot}13$ & $6$
\\
$\mbox{\mbox{RAN}}_3$ &$2$ &$4$ &$2^{5}5^{2}11^{2}$ &$4$
&$2^{45}3^{4}5^{12}7^{2}11^{7}13 {\cdot}19 {\cdot}31$ & $9$
\\
$\mbox{\mbox{RAN}}_4$ &$10$ &$6$ &$2^{2}5^{2}$ &$4$
&$2^{54}3^{8}5^{41}7^{3}11 {\cdot}13 {\cdot}31^{3}71$ & $0.5$
\\
$\beta_{-2}$ &$3$ &$3$ &$2^{6}$ &$1$ &$2^{19}7$ & $0.6$
\\
$\beta_{-1}$ &$3$ &$3$ &$11$ &$1$ &$7 {\cdot}19$ & $1.2$
\\
$\beta_1$ &$3$ &$3$ &$5$ &$1$ &$31$ & $0.4$
\\
$\beta_2$ &$3$ &$3$ &$2^{5}$ &$1$ &$2^{17}7$ & $0.3$
\\
$\beta_3$ &$3$ &$3$ &$3^{3}73$ &$2$ &$2^{3}3^{11}13 {\cdot}1801$ &
$2$
\\
$\beta_4$ &$3$ &$3$ &$2^{7}23$ &$2$ &$2^{31}7^{2}79$ & $2$
\\
$\beta_5$ &$3$ &$3$ &$5^{3}367$ &$3$ &$2^{4}3^{2}5^{10}13
{\cdot}31 {\cdot}3463$ & $14$
\\
$\beta_6$ &$3$ &$3$ &$2^{8}3^{3}5$ &$3$ &$2^{29}3^{10}7 {\cdot}13
{\cdot}31$ & $3$
\\
$\beta_7$ &$3$ &$3$ &$7^{3}1021$ &$3$ &$2^{5}3^{4}5
{\cdot}7^{10}19 {\cdot}347821$ & $40$
\\
$\rho_0$ &$3$ &$3$ &$11$ &$1$ &$7 {\cdot}19$ & $1$
\\
$\rho_1$ &$3$ &$3$ &$3^{4}$ &$1$ &$2^{2}3^{15}13$ & $0.2$
\\
$\rho_2$ &$3$ &$3$ &$5 {\cdot}7$ &$1$ &$2^{4}3^{2}5 {\cdot}7^{2}19
{\cdot}31$ & $1$
\\
$\rho_3$ &$3$ &$3$ &$13$ &$1$ &$2^{2}3 {\cdot}13^{2}61$ & $1$
\\
$\rho_4$ &$3$ &$3$ &$3^{3}7$ &$1$ &$2^{4}3^{11}7^{2}13 {\cdot}19$
& $2$
\\
$\rho_5$ &$3$ &$3$ &$19 {\cdot}31$ &$2$ &$2^{2}3^{3}5
{\cdot}31^{2}127 {\cdot}331$ & $3$
\\ \hline
\end{tabular}

\bigskip

\caption{Runtimes for setting up the initial data structure}
\label{TimesA}
\end{table}

Each group $\mbox{RAN}_i$ is generated by $\Gamma_{n,m}$ and products of
transvections of level dividing $m$
(see {\small \url{http://www.math.colostate.edu/~hulpke/examples/arithmetic.html}}
for the explicit matrices).
They seem to be different from any elementary group.  

The $\beta_T$ and $\rho_k$ are $\Gamma_{3,m}$-closures of their
namesakes from \cite[p.~414]{LongReid}. Apart from $\rho_1$, these
are arithmetic~\cite[Theorems~3.1 and 4.1]{LongReid}.
We discovered that $\beta_7$ has
larger index than the lower bound in~\cite{LongReid}.

For a second batch of examples we tested our orbit-stabilizer
algorithms on groups $H$
from Table~\ref{TimesA}.
Times in Table~\ref{TimesB} are solely for 
${\tt Stabilizer} \hspace{.25pt} (u,S)$, and include
the setup for $\overline{H}$. Here $l_1$ is the length of
$\overline{H}\overline{u}$, and $l_2$ is the length of the orbit
of $\underline{u}=\Gamma_{n,m}u$ under the preimage of
$\mathrm{Stab}_{\overline{H}}(\overline u)$. While the $u$ look
rather specific, random choices of $u$ do not alter runtimes
appreciably. The magnitude of $m$ likewise has minor impact; if 
$m$ is composite then the calculation of $\overline{H}\overline{u}$ 
can be separated into orbits modulo divisors of $m$.

\begin{table}[htb]

\begin{tabular}{|l|r|l|r|r|r|}
\multicolumn{1}{c}{Group} & \multicolumn{1}{c}{$m$} &
\multicolumn{1}{c}{$u$} & \multicolumn{1}{c}{$l_1$} &
\multicolumn{1}{c}{$l_2$} & \multicolumn{1}{c}{Time}
\\
\hline $E_{4,12}$ &$2^{4}3^{2}$ &$(1,0,0,0)$ &$2^{6}3^{3}$ &$1$ &
$1$
\\
$E_{4,12}$ &$2^{4}3^{2}$ &$(3,3,9,9)$ &$2^{8}$ &$3^{4}$ & $1.6$
\\
$E_{4,12}$ &$2^{4}3^{2}$ &$(6,6,6,6)$ &$2^{4}$ &$2^{4}3^{4}$ &
$158$
\\
$\mbox{RAN}_1$ &$2^{5}3^{2}$ &$(0,0,0,1)$ &$2^{10}3^{3}$ &$1$ &
$1$
\\
$\mbox{\mbox{RAN}}_1$ &$2^{5}3^{2}$ &$(0,0,0,6)$ &$2^{6}$
&$2^{4}3^{3}$ & $31$
\\
$\mbox{RAN}_1$ &$2^{5}3^{2}$ &$(0,0,0,12)$ &$2^{3}$ &$2^{7}3^{3}$
& $2346$
\\
$\mbox{RAN}_2$ &$2^{8}3^{4}$ &$(0,0,0,1)$ &$2^{22}3^{10}$ &$1$ &
$6.5$
\\
$\mbox{RAN}_2$ &$2^{8}3^{4}$ &$(0,0,0,2)$ &$2^{18}3^{10}$ &$2^{4}$
& $7.5$
\\
$\mbox{RAN}_2$ &$2^{8}3^{4}$ &$(0,0,0,3)$ &$2^{22}3^{6}$ &$3^{4}$
& $315$
\\
$\mbox{RAN}_2$ &$2^{8}3^{4}$ &$(0,0,0,6)$ &$2^{18}3^{6}$ & $2
{\cdot}2^{3}3^{4}$ & $-$
\\
$\beta_{-2}$ &$2^{6}$ &$(1,0,0)$ &$2^{13}3$ &$1$ & $0.6$
\\
$\beta_{-2}$ &$2^{6}$ &$(4,0,0)$ &$2^{7}3$ &$2^{6}$ & $1.1$
\\
$\beta_{-2}$ &$2^{6}$ &$(8,0,0)$ &$2^{4}3$ &$2^{9}$ & $32$
\\
$\beta_3$ &$3^{3}73$ &$(1,0,0)$ &$2^{5}3^{6}37 {\cdot}73$ &$1$ &
$2$
\\
$\beta_3$ &$3^{3}73$ &$(9,9,9)$ &$2^{6}3^{2}37 {\cdot}73$ &$3^{6}$
& $86$
\\
$\beta_5$ &$5^{3}367$ &$(1,0,0)$ &$2^{6}3^{2}5^{3}23 {\cdot}61
{\cdot}367$ &$1$ & $16$
\\
$\beta_5$ &$5^{3}367$ &$(0,0,5)$ &$2^{6}3^{2}5 {\cdot}23 {\cdot}61
{\cdot}367$ &$5^{2}$ & $17$
\\
$\rho_1$ &$3^{4}$ &$(1,0,0)$ &$3^{11}$ &$1$ & $0.3$
\\
$\rho_1$ &$3^{4}$ &$(3,0,0)$ &$3^{8}$ &$3^{3}$ & $0.35$
\\
$\rho_1$ &$3^{4}$ &$(9,0,0)$ &$3^{5}$ &$3^{6}$ & $61$
\\
$\rho_1$ &$3^{4}$ &$(9,9,9)$ &$3^{5}$ &$3^{6}$ & $72$\\
\hline
\end{tabular}

\bigskip

\caption{Runtimes for stabilizer computations} \label{TimesB}
\end{table}

What does have an impact is divisibility of entries in $u$ by
divisors of $m$, which yields longer orbits of $\underline{u}$.
The reason that this affects runtime appears to be twofold. First,
we must compare representatives for $\underline{u}$ using ${\tt
OrbitGamma}\_{\tt m}$. The number of comparisons is quadratic in
orbit length. Moreover, integer entries grow quickly even for
modest examples (it can happen that stabilizer elements have
entries with $10$--$20$ digits). As the auxiliary operations
entail iterated gcd calculations and integer factorization, each
equivalence test becomes relatively expensive.

We do not report on other procedures from
Subsection~\ref{ZProcedures} that are essentially computations in
$\GL(n,\Z_m)$. 

\medskip

\noindent \emph{Postscript}.
For developments in the area since the publication of 
\cite{ArithmPublished} (including further experiments 
with groups from \cite{LongReid}),
see, e.g., \cite{Density,DensityFurther}.

\subsubsection*{Acknowledgments}
The authors received support from Science Foundation Ireland grant
11/RFP.1/\allowbreak~MTH3212 (Detinko and Flannery) and Simons
Foundation Collaboration Grant~244502 (Hulpke). We are grateful to
Professors A.~Lubotzky, C.~F.~Miller~III, and T.~N. Venkataramana
for helpful advice. 
We also thank Steffen Kionke,
who detected an error in  
Proposition~\ref{AltMiNiGensets} of \cite{ArithmPublished}.


\bibliographystyle{amsplain}

\end{document}